\numberwithin{equation}{section}
\newtheorem{theorem}{Theorem}[section]
\newtheorem{corollary}[theorem]{Corollary}
\newtheorem{lemma}[theorem]{Lemma}
\newtheorem{proposition}[theorem]{Proposition}
\newtheorem{remark}[theorem]{Remark}
\newtheorem{definition}[theorem]{Definition}
\newtheorem{example}[theorem]{Example}
\newcommand{\T}{{\mathcal T}}
\newcommand{\U}{\mathcal{U}}
\title[Shadowing for Infinite Dimensional Dynamical Systems] 
{Shadowing for Infinite Dimensional Dynamical Systems}
\author[J. M. Arrieta]{José. M. Arrieta}
\address[J. M. Arrieta]{Dept. An\'alisis Matem\'atico y Matem\'atica Aplicada, Universidad Complutense de Madrid 28040 Madrid, Spain   and  Instituto de Ciencias Matem\'aticas ICMAT-CSIC-UAM-UC3M-UCM, Madrid, Spain.}
 \email{jarrieta@ucm.es}
\author[A. N. Carvalho]{Alexandre N. Carvalho}
\address[A. N. Carvalho]{Universidade de S\~{a}o Paulo\\ Instituto de Ci\^encias Matem\'aticas e de Computa\c{c}\~ao-ICMC\\ 13566-590 - S\~{a}o Carlos - S\~ao Paulo - Brazil.}
\email{andcarva@icmc.usp.br}
\author[C. R. Takaessu Jr]{Carlos R. Takaessu Jr}
\address[C. R. Takaessu Jr]{Universidade de S\~{a}o Paulo\\ Instituto de Ci\^encias Matem\'aticas e de Computa\c{c}\~ao-ICMC\\ 13566-590 - S\~{a}o Carlos - S\~ao Paulo - Brazil and Dept. An\'alisis Matem\'atico y Matem\'atica Aplicada, Universidad Complutense de Madrid 28040- Madrid - Madrid - Spain.}
 \email{carlostakaessujr@gmail.com}
\date{\today}
\begin{document}
\allowdisplaybreaks 
\maketitle

\begin{abstract} In this paper we extend to an infinite dimensional setting some results on the Shadowing property that are known on finite dimensional compact manifolds without border and in $\mathbb{R}^n$. In fact, we show that if $\{\T(t):t\ge 0\}$ is a Morse-Smale semigroup defined in a Hilbert space,   with global attractor $\mathcal{A}$ and non-wandering set given by its equilibria, then $\T(1)|_{\mathcal{A}}:\mathcal{A}\to \mathcal{A} $ admits the Lipschitz Shadowing property. Moreover, for any positively invariant bounded neighborhood $\U\supset\mathcal{A}$ of the global attractor, the map $\T(1)|_{\U}:\U\to \U$ has the H\"{o}lder-Shadowing property. We obtain  results related to the structural stability of Morse-Smale semigroups, that were only known on finite dimension,  and  continuity of global attractors.

\vskip .1 in
\noindent {\it 2020 Mathematical Subject Classification.} {37L05, 35R15 , 37D05, 37L45.}

\noindent {\it Key words and phrases:} { Shadowing, Morse-Smale semigroups, Global Attractor,  Structural Stability, Continuity of Attractors.}

\end{abstract}

\maketitle

\section{Introduction and statement of the results} 

The theory of dynamical systems (or semigroups) has been constantly developed and has proven to be a powerful tool for studying the asymptotic behavior of solutions of differential equations, both in finite \cite{smale1963stable,PEIXOTO1967214,palis2012geometric,viana2016foundations,shub2013global} and infinite dimensional spaces \cite{hale2006dynamics,da2010dirichlet,carvalho2011exponential,Bortolan-Carvalho-Langa-book,carvalho1998boundary,bortolan2014structure,carvalho2008regularity,carvalho2009local,temam2012infinite}. Very often the phase space of the dynamical system is a Hilbert space \cite{arrieta1992damped,rozendaal2019optimal,arrieta2000abstract,carvalho2012attractors,Bortolan-Carvalho-Langa-book,carvalho2008regularity,hale2006dynamics,tan1993asymptotic,arrieta2017distance,ArrietaEsperanza}. 

One of the most important concepts related to
  infinite dimensional dynamics is the global attractor $\mathcal{A}$ \cite{hale2006dynamics,hale2010asymptotic,henry2006geometric,Bortolan-Carvalho-Langa-book,carvalho2008regularity,aragao2013non,carvalho2011exponential,raugel2002global,rosa1998global,hoang2015continuity}, since this special set dictates the whole asymptotic dynamics of the semigroup. Therefore we are very interested in analyzing the behavior of the orbits inside $\mathcal{A}$ and on its neighborhood. In fact, a large part of the studies related to infinite dimensional dynamical systems are based on studying the existence of the global attractor, to understand its structure and to analyze the dynamics inside $\mathcal{A}$.

The behavior of the attractor  $\mathcal{A}$ under small perturbations is also an important topic in the theory of dynamical systems \cite{bortolan2014structure,arrieta2017distance,Arrieta-Esperanza-2014,carvalho2012attractors,hale1989lower,pilyugin2010lipschitz,raugel2002global, ArrietaEsperanza}. For example, the structural stability of dynamical systems and continuity of global attractors are important topics of the theory. One of the  tools that allows us to study the relation between perturbed and non-perturbed attractors is the Shadowing property \cite{pilyugin2010lipschitz,arrieta2017distance}.

The Shadowing property (see Definition \ref{defshadoww}) is a well known tool in the theory of dynamical systems and has been intensively studied over the years \cite{Pilyugin,pilyugin2010lipschitz,lee2021diffeomorphisms,palmer2012lipschitz,pilyugin2017shadowing,henry1994exponential,gan2002generalized,chow1989shadowing,santamaria2014distance}. Shadowing has a lot of important consequences on finite dimensional dynamical systems and also on applied sciences, such as numerical analysis \cite{Pilyugin}. Moreover, Shadowing has also been studied on semigroups defined in infinite dimensional phase spaces  \cite{Pilyugin,henry1994exponential,chow1989shadowing,li2003chaos}.

It is known that if $\{\mathcal{T}(t):t\ge 0\}$ is a  Morse-Smale semigroup, see Definition \ref{definitionms} below and \cite{kupka1963contributiona,smale1963stable,PEIXOTO1967214,palis1969morse,hale2006dynamics,bortolan2020lipschitz,Bortolan-Carvalho-Langa-book,bortolan2022nonautonomous}, defined in a finite dimensional smooth compact manifold  (without border), then $\mathcal{T}(1)$ has the Lipschitz Shadowing property \cite{Pilyugin}. By compactification, the same holds if the phase space is $\mathbb{R}^n$ \cite{santamaria2014distance} and in this case we obtain Lipschitz Shadowing in a neighborhood of the global attractor $\mathcal{A}\subset\mathbb{R}^n$. Moreover, if a Morse-Smale semigroup (possibly defined in an infinite dimensional space) has an inertial manifold \cite{sell2013dynamics,robinson2002computing,foias1988inertial,jolly1989explicit,robinson2001infinite}, which is a finite dimensional invariant manifold that attracts each point of the phase space exponentially, then it is also possible to obtain Lipschitz Shadowing in a neighborhood of its attractor. In fact, the existence of  an inertial manifold $\mathcal{M}$ allow us to reduce the dynamics on a finite dimensional manifold  so we can apply the known results of Shadowing (in manifolds) to obtain Lipschitz Shadowing in a neighborhood of $\mathcal{A}\subset\mathcal{M}$ \cite{Pilyugin}.

In  \cite{santamaria2014distance,arrieta2017distance} the authors use the existence of an inertial manifold and the Lipschitz Shadowing property in a neighborhood of the global attractor to  obtain results about  continuity of global attractors. To be more specific, the authors studied how the inertial manifolds of a perturbed problem behave in relation to the inertial manifold of the limit problem. With this, they were able to use the Lipschitz Shadowing property in a neighborhood of global attractor to obtain an optimal rate of convergence of the global attractors. 

Unfortunately, inertial manifolds are not easily found in applications. In fact, the existence of an inertial manifold usually  requires a strong gap condition on the eigenvalues of the linear operator associated to the equation \cite{sell2013dynamics,robinson2002computing}. There are several examples of equations whose dynamics can not be reduced to a finite dimensional manifold.  For example, the linear operator associated to the damped wave equation \cite{arrieta1992damped,brunovsky2003genericity,carvalho2012attractors} does not satisfy the gap condition. This implies that the dynamics generated by the nonlinear damped wave equation is ``purely infinite dimensional'' in general. Consequently, it is not possible to apply the known results about Shadowing to guarantee that the semigroup generated by the damped wave equation has the Shadowing property in a neighborhood of its global attractor.

Therefore, a natural question arises: is it possible to obtain Shadowing on semigroups defined in infinite dimensional vector spaces such that its dynamics can not be reduced to a finite dimensional manifold? 
As a matter of fact, we want to prove that Morse-Smale semigroups (Definition \ref{definitionms}), defined in infinite dimensional vector spaces, have the Shadowing property in a neighborhood of its global attractor $\mathcal{A}$, even without the existence of an inertial  manifold.  The main result of this manuscript states that we can remove the hypothesis related to the inertial manifold $\mathcal{M}$ and still obtain  Lipschitz Shadowing inside the global attractor $\mathcal{A}$. Moreover, the property of H\"{o}lder Shadowing \cite{tikhomirov2015holder} holds in a neighborhood of the global attractor. That is:

\begin{theorem}\label{teointroducao}
     Let $\mathcal{T}:=\{\mathcal{T}(t):t\ge0\}\subset\mathcal{C}^1(X)$ be a Morse-Smale semigroup in a Hilbert space $X$ with global attractor $\mathcal{A}$, as in Definition \ref{definitionms} below. Assume that  the Fréchet derivative $D_x\mathcal{T}(t)$ of $\mathcal{T}(t)$ at $x$  satisfies:
     \begin{enumerate}
         \item[(H1)] There exists $C_0,C_1>0$ such that
         \begin{equation*}
            \parallel D_x\mathcal{T}(t)\parallel_{\mathcal{L}(X)}\le C_1e^{C_0t},\ \forall t\ge0,\ \forall x\in\mathcal{A};
         \end{equation*}
        \item[(H2)] For each $x\in\mathcal{A}$ and $v\in X$ the map $[0,+\infty)\ni t\mapsto (D_x\mathcal{T}(t))v\in X$ is continuous.
     \end{enumerate}

      Then,
     \begin{enumerate}
         \item\label{item1daintro..}  $\mathcal{T}(1)|_{\mathcal{A}}:\mathcal{A}\to\mathcal{A}$ has the Lipschitz Shadowing property;
         \item\label{item2daintro..}  For any positively invariant bounded neighborhood $\mathcal{U}\supset\mathcal{A}$, the map $\mathcal{T}(1)|_{\mathcal{U}}:\mathcal{U}\to\mathcal{U}$ has the $\alpha$-H\"{o}lder-Shadowing property for some $0<\alpha<1$.
     \end{enumerate}
\end{theorem}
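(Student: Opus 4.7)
The plan is to exploit the Morse-Smale structure on $\mathcal{A}$ to show that every global solution lying in the attractor is uniformly hyperbolic (the associated discrete linear cocycle admits an exponential dichotomy on $\mathbb{Z}$), and then to invoke a Palmer-type lemma that converts uniform dichotomy into Lipschitz shadowing. For the extension to the neighborhood $\mathcal{U}$, I would attract orbits to $\mathcal{A}$, apply the attractor result, and then pay the price of the exponential bound (H1) on the transient piece, which downgrades Lipschitz shadowing to H\"older shadowing.

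For item (1), the first step is to establish that for every $x\in\mathcal{A}$ the linear cocycle $L_n(x):=D_{\mathcal{T}(n-1)(x)}\mathcal{T}(1)$, $n\in\mathbb{Z}$, defined along a global solution through $x$, admits an exponential dichotomy on $\mathbb{Z}$ with constants and projection norms uniformly bounded over $\mathcal{A}$. The Morse-Smale hypothesis supplies hyperbolicity at each equilibrium (or periodic orbit) and transversality of stable and unstable manifolds; (H1) provides the uniform growth bound required by the Perron construction, and (H2) gives the strong continuity of the cocycle. Compactness of $\mathcal{A}$ together with the finite Morse decomposition and the finite-dimensionality of unstable manifolds allows the local hyperbolic splittings at the equilibria to be patched into a single uniform dichotomy along every orbit of $\mathcal{A}$. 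With this in hand, given a $\delta$-pseudo-orbit $\{y_n\}\subset\mathcal{A}$, I would write the shadowing equation $\mathcal{T}(1)(y_n+v_n)=y_{n+1}+v_{n+1}$ as $v_{n+1}=L_{n+1}(y_n)v_n+R_n(v_n)$ and solve it by a Banach fixed point in $\ell^{\infty}(X)$ using the dichotomy, obtaining $\|v_n\|\le K\delta$ uniformly in $n$, which is exactly Lipschitz shadowing on $\mathcal{A}$.

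For item (2), I would use that $\mathcal{U}$ is positively invariant and attracted to $\mathcal{A}$: for any $\eta>0$ there exists $N=N(\eta)$ with $\mathrm{dist}(\mathcal{T}(n)(x),\mathcal{A})<\eta$ for all $n\ge N$ and all $x\in\mathcal{U}$. Given a $\delta$-pseudo-orbit $\{y_n\}\subset\mathcal{U}$, I would start genuine shadowing from index $n\ge N$, where the points already lie $\eta$-close to $\mathcal{A}$; projecting each such $y_n$ onto $\mathcal{A}$ yields a $(\delta+2\eta)$-pseudo-orbit on $\mathcal{A}$, which is Lipschitz-shadowed by a true orbit $\{z_n\}\subset\mathcal{A}$ by item (1). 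For the first $N$ indices I would recover the shadowing orbit by propagating $z_N$ backward along the semigroup; hypothesis (H1) bounds the accumulated backward error by a term of order $e^{C_0 N}\eta$. Choosing $N=N(\delta)$ to balance $\eta$ against $e^{C_0 N}\eta$ (using that the attraction time grows logarithmically in $\eta^{-1}$) produces a H\"older exponent $\alpha\in(0,1)$ depending on $C_0$ and the attraction rate of $\mathcal{U}$ to $\mathcal{A}$.

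\textbf{Main obstacle.} The hardest step is the first one: establishing the uniform exponential dichotomy along every orbit in $\mathcal{A}$ in the infinite dimensional setting. The stable direction is infinite dimensional, so cone-field arguments used on compact finite dimensional manifolds do not transfer directly; instead the dichotomy has to be built from the local hyperbolic splittings at the equilibria and propagated along heteroclinic orbits using the transversality encoded in the Morse-Smale condition, while keeping the dichotomy constants uniform across $\mathcal{A}$. A secondary technical point in (2) is to quantify the attraction rate of $\mathcal{U}$ to $\mathcal{A}$ precisely enough that the H\"older exponent can be written explicitly in terms of the given data $C_0$ and this rate, thus yielding a well-defined $\alpha$.
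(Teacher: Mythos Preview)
Your plan for item (1) is essentially the paper's strategy. What you call ``uniform exponential dichotomy along every orbit in $\mathcal{A}$'' is exactly what the paper builds via its \emph{compatible subbundles} $S_i,U_i$ (Theorem~\ref{lemma2.2.9}) and the global subbundles $S,U$ glued from them, with the uniform projection bound and exponential estimates of Lemma~\ref{lemma2.2.11}. The paper carries this out by adapting the Robbin--Robinson construction to the situation where $D_x\mathcal{T}(t)$ is only injective (not surjective) and $W^s$ is not a manifold; the key new ingredients are the backward-continuity lemmas (Lemma~\ref{contiparatras}, Proposition~\ref{propinterseccont}) and the induction repair in Lemmas~\ref{feliz}--\ref{felizz}. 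Your ``main obstacle'' paragraph identifies precisely this difficulty, so you are aligned with the paper here; once the uniform splitting is in hand, both you and the paper finish with the same Banach fixed-point/Palmer argument.

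Your plan for item (2) has a genuine gap. The pseudo-orbits are bi-infinite ($n\in\mathbb{Z}$), so there are no ``first $N$ indices'' to handle separately, and propagating a point backward to cover negative $n$ would require backward Lipschitz control you do not have (you invoke (H1), which is a forward bound, and $\mathcal{T}(1)$ is not invertible on $\mathcal{U}$). The paper avoids this entirely: Lemma~\ref{lemma-distancia} shows directly that \emph{every} point $x_n$ of a $d$-pseudo-orbit in $\mathcal{U}$ satisfies $d(x_n,\mathcal{A})\le C_2 d^{\alpha}$, by looking at $x_{n-N}$ (which exists for all $n$) and balancing the exponential attraction $Ce^{-\gamma N}$ of $\mathcal{T}^N x_{n-N}$ to $\mathcal{A}$ against the accumulated pseudo-orbit error $\sim L_1^N d$ (here $L_1$ is the Lipschitz constant of $\mathcal{T}(1)|_{\mathcal{U}}$, and exponential attraction comes from Lemma~\ref{exponentialattractor}). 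Once every $x_n$ is $Cd^{\alpha}$-close to $\mathcal{A}$, projecting to $\mathcal{A}$ yields a $C'd^{\alpha}$-pseudo-orbit there, and item (1) gives a shadowing orbit with error $LC'd^{\alpha}$ (Theorem~\ref{shadowingviz}). Your balance intuition (``attraction time grows logarithmically in $\eta^{-1}$'') is correct and is exactly the computation in Lemma~\ref{lemma-distancia}; you just need to apply it pointwise for all $n\in\mathbb{Z}$ rather than treating an initial transient block.
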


 As far as we know, Theorem \ref{teointroducao} is the first result of Shadowing in a neighborhood of the global attractor $\mathcal{A}$ that does not appeal to the result of Lipschitz Shadowing in finite dimension.
Moreover, the proof of Lipschitz Shadowing for Morse-Smale semigroups defined in finite dimensional smooth compact manifolds (with no border) \cite{Pilyugin} also  requires that items (H1) and  (H2)  hold. Hence, item (1) of Theorem \ref{teointroducao} generalizes the result of Lipschitz Shadowing in finite dimensional compact manifolds, in the particular case where the non-wandering set is the set of equilibrium points (see Definition \ref{definitionms}). We highlight that items (H1)  and (H2) are reasonable assumptions and are often satisfied in the infinite dimensional context \cite[Theorem 6.33]{carvalho2012attractors}, especially when the semigroup $\mathcal{T}$ satisfies the variation of constants formula. For example, the damped wave equation generates a Morse-Smale semigroup that satisfies items (H1) and (H2), as we will see in Example \ref{exemploms}.

As we said before, the semigroup related to the damped wave operator (see \ref{dwequation})  does not satisfy the conditions required to obtain an inertial manifold  and consequently it is not possible to obtain shadowing in a neighborhood of its global attractor by the known results. Despite that, since the semigroup related to the damped wave equation satisfies all assumptions from Theorem \ref{teointroducao}, we will guarantee that the H\"{o}lder-Shadowing property  holds in a neighborhood of its attractor.

Let us briefly discuss the idea to prove Theorem \ref{teointroducao}. We will prove items (1) and (2) of Theorem \ref{teointroducao} separately. First we prove item (1), which is the most eloborated part.  In fact, after proving item (1) we will show item (2) just by  estimating the distance between bounded pseudo-orbits and the global attractor (see Lemma \ref{lemma-distancia}). To put more details, we will show that if $\{x_n\}_{n\in\mathbb{Z}}\subset\mathcal{U}$ is a  $\delta$-pseudo orbit  of $\mathcal{T}(1)$, where $\mathcal{U}$ is a bounded neighborhood of $\mathcal{A}$, then the smaller the $\delta$ the close $x_n$ is to $\mathcal{A}$, so that if $\delta\to 0$ then $d(x_n,\mathcal{A})\to 0$.

 The strategy to prove item (1) of Theorem \ref{teointroducao} is to adapt the (very beautiful) proof of  Lipschitz Shadowing for Morse-Smale systems in compact smooth manifolds $M$ (with no border), that is given in \cite[Chapter 2]{Pilyugin}. We will now provide a sketch of the proof in the finite dimensional case and then highlight the difficulties to adapting this proof to our case, where the phase space is infinite dimensional. The proof in finite dimension is  divided in two parts: the construction of compatible subbundles, originally introduced by \cite{robinson1974structural}, and the application of the Banach Fixed Point Theorem to obtain an orbit close to a pseudo-orbit. To put more details about this two parts, let us first focus on the application of the Banach Fixed Point Theorem. 

Let $\{\mathcal{T}(t):t\ge0\}$ be a Morse-Smale semigroup defined in a finite dimensional space $(X,\parallel\cdot\parallel)$ (it could be also a compact manifold with no border) with global attractor $\mathcal{A}$. Fix  a bounded $\delta-$pseudo orbit  $\{x_n\}_{n\in\mathbb{Z}}\subset\mathcal{A}$  of $\mathcal{T}(1)$ (see Definition \ref{defshadoww}). To obtain an orbit of $\mathcal{T}(1)$ close to $\{x_n\}_{n\in\mathbb{Z}}$ we have to find a family of vectors $\{v_n\}_{n\in\mathbb{Z}}\subset X$, where $\parallel v_n\parallel$ is small for all $n\in\mathbb{Z}$, such that $\mathcal{T}(1)(x_n+v_n)=x_{n+1}+v_{n+1}$ for all $n\in\mathbb{Z}$. The ``true orbit'' is given by $\{x_n+v_n\}_{n\in\mathbb{Z}}$. Hence, we  want to obtain a fixed point of the map \begin{align*}
    \phi:&Y\longrightarrow Y\\
    &\{v_n\}_{n\in\mathbb{Z}}\mapsto \{\mathcal{T}(1)(x_{n-1}+v_{n-1})-x_{n}\}_{n\in\mathbb{Z}},
\end{align*}
where $Y=X^{\mathbb{Z}}$ is the Banach space of bounded sequences of $X$, with $\parallel v_n\parallel_Y=\sup\limits_{n\in\mathbb{Z}}\parallel v_n\parallel$. Note that $\parallel \phi(\{0\}_{n\in\mathbb{Z}})\parallel_Y<\delta$, which indicates that the fixed point of $\phi$ has small norm, as we desire. Therefore, for each $n\in\mathbb{Z}$ we have to study the map $\phi_n:X\to X$ given by $\phi_n(v)=\mathcal{T}(1)(x_{n-1}+v)-x_n$ for all $v\in X$. Since $\phi_n=D_{x_n}\mathcal{T}(1)+(\phi_n-D_{x_n}\mathcal{T}(1))$ and the map $\phi_n-D_{x_n}\mathcal{T}(1)$ is a contraction (in a ball of $Y$ centered in $0$ and with small radius),  we need to extract properties from the linear map $D_{x_n}\mathcal{T}(1)$ so it becomes a contraction and we can apply the Banach Fixed Point Theorem. To be more precise, we will have to find subspaces $S(x_n)$ and $U(x_n)$ such that   $S(x_n)\oplus U(x_n)=X$, and the restrictions $(D_{x_n}\mathcal{T}(1))|_{S(x_n)}$ and $(D_{x_n}\mathcal{T}(-1))|_{U(x_n)}$ have small norms, where we will need to give a meaning to $\mathcal{T}(-1)$ and $D_x\mathcal{T}(-1)$ since in general $\mathcal{T}$ is only defined for $t\ge0$. This subspaces will behave similar to the stable and unstable subspaces of a hyperbolic fixed point. This is where the geometrical construction of the compatible subbundles is essential.

Therefore,  to apply the Banach Fixed Point Theorem we have to construct two families of subspaces $\{S(x)\subset X\}_{x\in\mathcal{A}}$ and $\{U(x)\subset X\}_{x\in\mathcal{A}}$, that must satisfy several properties. Summarizing,  the most important properties are the positively invariance in relation to the derivative
\begin{equation*}
     (D_x\mathcal{T}(t))S(x)\subset S(\mathcal{T}(t)x)\ \text{and}\ (D_x\mathcal{T}(-t))U(x)\subset U(\mathcal{T}(-t)x),\ \forall x\in\mathcal{A},\ \forall t\ge0
\end{equation*}
and the exponential decay
\begin{align*}
    &\parallel (D_x\mathcal{T}(t))v\parallel\le C\lambda^t\parallel v\parallel,\ \forall v\in S(x),\ \forall t\ge0,\forall x\in\mathcal{A},\\
    &\parallel (D_x\mathcal{T}(-t))v\parallel\le C\lambda^t\parallel v\parallel,\ \forall v\in U(x),\ \forall t\ge0,\forall x\in\mathcal{A}.
\end{align*}
for some $C>0$ and $\lambda\in(0,1)$.
This is where the main difficulties lie in extending  the proof of Lipschitz Shadowing in finite dimensional to the infinite dimensional setting. In fact, the application of the Banach Fixed Point Theorem in the infinite dimensional context follows very similar to the finite dimensional case but the construction of the subbundles $S$ and $U$ need several adjustments in the infinite dimensional setting. Therefore, if we are able to construct the compatible subbundles, we are practically done with the proof of item (1) of Theorem \ref{teointroducao}. Let us highlight why the construction of the compatible subbundles in the infinite dimensional setting does not follow immediately from the proof in finite dimension. To obtain the subbundles $S$ and $U$ in a compact manifold $M$, the author in \cite{Pilyugin} uses the following facts:

\begin{enumerate}
    \item[(P1)] The derivative $D_x\mathcal{T}(t)$ of $\mathcal{T}(t)$ at $x$ is an isomorphism for all $x\in M$ and $t\ge0$;
    \item[(P2)] $M$ is a smooth (finite dimensional) manifold and consequently, $W^s(x^*)$ and $W^u(x^*)$ are $\mathcal{C}^1$-manifolds for any hyperbolic equilibrium $x^*$;
\end{enumerate}

The surjectivity of the derivative  $D_x\T(t)$ (that is just an isomorphism onto its range in the infinite dimensional setting), is essential to construct the compatible subbundles in finite dimensional. For example,  the compatible subbundles are constructed  by mathematical induction (see \cite[Lemma 2.2.9]{Pilyugin}) and the surjectivity of the derivative is used to prove the induction hypothesis.  Moreover, if the derivative is an isomorphism, then the construction of the subbundle $S$ is analogous to the construction of the subbundles $U$, which is not our case. Furthermore, $W^s(x^*)$ is not necessarily a manifold if the phase space has infinite dimension and therefore the tangent spaces $T_xW^s(x^*)$, which are also essential for the construction of the compatible subbundles,  are not well defined for each $x\in W^s(x^*)$. These are  some of the obstacles that we will have to overcome, which clearly states that the proof in infinite dimension does not follow straightforward from the finite dimensional case.

 As applications of Theorem \ref{teointroducao}, we will present results related to the structural stability of Morse-Smale semigroups defined in Hilbert spaces and to the continuity of global attractors. It is known that small perturbations of Morse-Smale semigroups are still Morse-Smale and there exists a  phase diagram isomorphism between the limit and the perturbed global attractors \cite{Bortolan-Carvalho-Langa-book}. However, it is not known if the global orbits inside the attractors from the perturbed problems remain close to the global orbits inside the attractor from the limit problem, as it happens in the finite dimensional case \cite{Pilyugin}. With item (2) of Theorem \ref{teointroducao} we will guarantee this property (Theorem \ref{robustezams}). In fact, we can guarantee that bounded global orbits from perturbed Morse-Smale systems remains close to a bounded global orbit from the limit problem even for non-autonomous perturbations of an autonomous Morse-Smale semigroup (see Theorem  \ref{gssna}). 

 Regarding the continuity of global attractors, we will show a new method to estimate the distance between bounded pseudo-orbits and the global attractor (see Lemma \ref{lemma-distancia}). With this method, we will provide a new way to estimate the distance between global attractors, that allow us to obtain an $\alpha$-H\"{o}lder relation, for any $\alpha\in(0,1)$, between the distance of the global attractors and the distance of the semigroups (see Theorem \ref{contfinal}).

The present paper is divided as follows: in Section \ref{secbasic} we define the main concepts of this work, such as Morse-Smale semigroups and Shadowing. In Section \ref{sectionnoatrator} we adapt the construction of compatible subbundles \cite{Pilyugin,robinson1974structural} to prove item (1) of Theorem \ref{teointroducao}. In Section \ref{sectionneighbo} we estimate the distance between pseudo-orbits and the global attractor and prove item (2) of Theorem \ref{teointroducao}. In Section \ref{seccont} we mix  results from Sections \ref{sectionnoatrator} and \ref{sectionneighbo} to obtain applications related to the stability of Morse-Smale semigroups (in infinite dimensional Hilbert spaces) and continuity of global attractors. Finally, we have an Appendix in Section \ref{appendix}.

\section{Basic concepts and known facts}\label{secbasic}
In this section we announce some of the fundamental concepts and results from  the theory of dynamical systems, such as Shadowing, global attractors and Morse-Smale semigroups. These concepts will be present through the whole text and are essential for the understanding of this work.

We start this section by introducing the notion of pseudo-orbits and  Shadowing.
\begin{definition}\label{defshadoww}
	Let $(M,d)$ be a metric space,
	$\mathcal{T}:M\to M$ be a map. 
	\begin{enumerate}
        \item A sequence $\{x_n\}_{n\in \mathbb{Z}}$ in $M$ is an \textbf{orbit} of $\mathcal{T}$ if $\mathcal{T}x_n=x_{n+1}$ for all $n\in\mathbb{Z}$.
	\item A sequence $\{x_n\}_{n\in \mathbb{Z}}$ in $M$ 
		is said to be a  \textbf{$\delta$-pseudo orbit}
		of $\mathcal{T}$ for some $\delta>0$ if
	\begin{equation*}
		d(\mathcal{T}x_n,x_{n+1})\leq \delta,  \ \forall\  n\in \mathbb{Z}.
	\end{equation*}
        
		\item Let $\{x_n\}_{n\in \mathbb{Z}}$ and $\{z_n\}_{n\in \mathbb{Z}}$ be sequences on $M$.
		We say that $\{z_n\}_{n\in \mathbb{Z}}$ \textbf{$\epsilon$-shadows}
		$\{x_n\}_{n\in \mathbb{Z}}$, for some $\epsilon>0$ if 
		\begin{equation*}
		d(x_n,z_n)\leq \epsilon, \ \forall\ n\in\mathbb{Z}.
		\end{equation*}

	\item We say that $\mathcal{T}$ admits the  \textbf{$\alpha$-H\"{o}lder Shadowing} property
		if there exists $d_0,L>0$ and $\alpha\in(0,1]$ such that  any  
		$d$-pseudo orbit of $\mathcal{T}$, with $d\in [0,d_0]$, is $Ld^{\alpha}$-shadowed by an orbit of $\mathcal{T}$. If $\alpha=1$ we say that $\mathcal{T}$ has the \textbf{Lipschitz Shadowing} property.
  \item We say that $\mathcal{T}$ admits the  \textbf{Logarithm Shadowing} property if there exists $d_0,L>0$ such that  any  
		$d$-pseudo orbit of $\mathcal{T}$, with $d\in [0,d_0]$, is $Ld|\ln{d}|$-shadowed by an orbit of $\mathcal{T}$.
	\end{enumerate} 
\end{definition}

    Note that if $\alpha\in(0,1)$ then
    \begin{equation*}
        \text{Lipschitz Shadowing}\Rightarrow\text{Logarithm Shadowing}\Rightarrow\alpha-\text{H\"{o}lder Shadowing}.
    \end{equation*}

As we said before, the main result of this work is to show the properties of Lipschitz and H\"{o}lder Shadowing stated in Theorem \ref{teointroducao}. We introduce the notion of Logarithm Shadowing in Definition  \ref{defshadoww} because it will also appear in Section \ref{sectionneighbo}, where we show that it is possible to obtain Logarithm Shadowing in a especial case, depending on the Lipschitz constant of the map (Theorem \ref{teodasconstantesdelip}).

We now introduce some standard definitions from the semigroup/dynamical systems theory.

\begin{definition}
Let $(M,d)$ be a metric space and $\mathcal{C}(M)$ be the space of continuous functions from $M$ into $M$. We say that the family $\mathcal{T}=\{\mathcal{T}(t):t\ge 0\}\subset \mathcal{C}(M)$ is a \textbf{semigroup} (dynamical system) if satisfies:
\begin{enumerate}
    \item $\mathcal{T}(0)x=x,\ \forall x\in M;$
    \item $\mathcal{T}(t)\mathcal{T}(s)=\mathcal{T}(t+s),\ \forall t,s\ge0;$
    \item The map $[0,+\infty)\times M\ni(t,x)\mapsto \mathcal{T}(t)x$ is continuous.
\end{enumerate}
\end{definition}

\begin{definition}
    Let $\mathcal{T}=\{\mathcal{T}(t):t\ge0\}\subset\mathcal{C}(M)$ be a semigroup in a metric space $(M,d)$ and $S\subset M$. We say that
    \begin{enumerate}
        \item $S$ is \textbf{positively invariant} if
        $\mathcal{T}(t)S\subset S$ for all $t\ge0$;
        \item $S$ is  \textbf{invariant} if  $\mathcal{T}(t)S= S$ for all $t\ge0$. In particular, if $S=\{x^*\}$ is an invariant unitary set, we say that $x^*$ is an \textbf{equilibrium point} of $\mathcal{T}$.
    \end{enumerate}
\end{definition}

Let us define the concept of global attractor, which is a crucial subject from the theory of infinite dimensional dynamical systems. In fact, the global attractor dictates the whole dynamics of the system.
\begin{definition}\label{defatt}
    Let $\mathcal{T}=\{\mathcal{T}(t):t\ge 0\}$ be a semigroup in a metric space $(M,d)$. We say that a compact set $\mathcal{A}\subset M$ is a \textbf{global attractor} if  it is invariant, i.e. $\mathcal{T}(t)\mathcal{A}=\mathcal{A}$ for all $t\ge 0$,
    and 
    attracts bounded subsets of $M$, that is, for each bounded subset $B$ of $M$ 
        \begin{equation}
            \lim_{t\to +\infty} dist_H(\mathcal{T}(t)B,\mathcal{A})=0,
        \end{equation}
        where $dist_H(A_1,A_2)=\sup\limits_{a_1\in A_1}\inf\limits_{a_2\in A_2}d(a_1,a_2)$ is the Hausdorff semi-distance between the subsets $A_1,A_2\subset M$.
\end{definition}

 It follows from Definition \ref{defatt} that the global attractor is unique. We now define the concept of global solution, which is important to characterize the global attractor.
\begin{definition}[Global Solution/Orbit]\label{defglobalsol}
    Let $\mathcal{T}=\{\mathcal{T}(t):t\ge0\}\subset\mathcal{C}(M)$ be a semigroup in a metric space $M$. We say that a continuous function $\phi:\mathbb{R}\to M$ is a \textbf{global solution} (or global orbit) of $\mathcal{T}$ if satisfies $\mathcal{T}(s)\phi(t)=\phi(t+s)$ for all $s\ge0$ and $t\in\mathbb{R}$. If $\phi(0)=x$ for some $x\in M$ we say that $\phi$ is a global solution through $x$.
\end{definition}

The following result is a characterization of the global attractor. We refer to   \cite{Bortolan-Carvalho-Langa-book} for more details.

\begin{proposition}\label{characatt}
    Let $\mathcal{T}=\{\mathcal{T}(t):t\ge0\}$ be a  semigroup in a metric space $(M,d)$ with global attractor $\mathcal{A}$. Then the global attractor is characterized by  
    \begin{equation*}
        \mathcal{A}=\{x\in M:\text{there exists a bounded global solution}\ \phi:\mathbb{R}\to M\ \text{through}\ x\}.
    \end{equation*}
    Consequently, any bounded global solution $\phi:\mathbb{R}\to M$ is contained in the global attractor $\mathcal{A}$.
\end{proposition}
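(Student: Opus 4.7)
The plan is to prove the set equality by double inclusion, letting $B$ denote the set on the right-hand side, and then the final consequence is immediate.

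For the inclusion $B \subset \mathcal{A}$, I would start from $x \in B$ with an associated bounded global solution $\phi : \mathbb{R} \to M$ satisfying $\phi(0) = x$. Setting $K = \phi(\mathbb{R})$, the key observation is that $K$ is bounded and, by the semigroup property in Definition \ref{defglobalsol}, one has $x = \mathcal{T}(t)\phi(-t) \in \mathcal{T}(t) K$ for every $t \ge 0$. Since $\mathcal{A}$ attracts bounded sets, $\mathrm{dist}_H(\mathcal{T}(t)K, \mathcal{A}) \to 0$ as $t \to \infty$, so $d(x, \mathcal{A}) \le \mathrm{dist}_H(\mathcal{T}(t)K, \mathcal{A}) \to 0$. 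Compactness (hence closedness) of $\mathcal{A}$ yields $x \in \mathcal{A}$. This direction is straightforward and uses only the attraction property together with invariance.

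For the inclusion $\mathcal{A} \subset B$, I would fix $x \in \mathcal{A}$ and construct a bounded global solution through $x$ using the backward invariance $\mathcal{T}(t) \mathcal{A} = \mathcal{A}$. The forward part is trivial: set $\phi(t) := \mathcal{T}(t) x$ for $t \ge 0$, which stays inside $\mathcal{A}$. The nontrivial piece is the backward extension. For each $n \in \mathbb{N}$, invariance gives some $y_n \in \mathcal{A}$ with $\mathcal{T}(n) y_n = x$; iteratively, for each $n$ one can choose $y_{n,k} \in \mathcal{A}$ with $\mathcal{T}(1) y_{n,k+1} = y_{n,k}$ and $y_{n,0} = x$, producing a finite backward piece of length $n$ lying in $\mathcal{A}$. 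To stitch these into a single global solution I would use a diagonal/compactness argument: since $\mathcal{A}$ is compact, pass to a subsequence to obtain limits $\phi(-n) \in \mathcal{A}$ for every $n$ with $\mathcal{T}(1)\phi(-n-1) = \phi(-n)$, then define $\phi$ on $[-n-1, -n]$ by $\phi(t) = \mathcal{T}(t+n+1)\phi(-n-1)$ and check consistency via the semigroup property. The resulting $\phi : \mathbb{R} \to \mathcal{A}$ is continuous and bounded, hence $x \in B$.

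The main obstacle is the backward construction: because $\mathcal{T}(t)$ is not assumed to be invertible on $M$, one cannot simply write $\phi(-t) = \mathcal{T}(-t) x$; the argument genuinely needs invariance $\mathcal{T}(t)\mathcal{A} = \mathcal{A}$ together with compactness of $\mathcal{A}$ to extract a convergent subsequence of pre-images, and then continuity of $\mathcal{T}$ to verify that the pasted-together map is indeed a global solution. Once both inclusions are in hand, the final sentence of the proposition is automatic: if $\phi : \mathbb{R} \to M$ is any bounded global solution, then for every $t \in \mathbb{R}$ the translate $\psi(\cdot) = \phi(\cdot + t)$ is a bounded global solution through $\phi(t)$, so $\phi(t) \in B = \mathcal{A}$.
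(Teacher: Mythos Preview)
Your argument is correct and is exactly the standard proof of this characterization. Note, however, that the paper does not actually prove this proposition: it simply states ``the following result, which proof can be found in \cite{Bortolan-Carvalho-Langa-book},'' and moves on. So there is nothing to compare against beyond observing that your double-inclusion argument---attraction of the bounded set $\phi(\mathbb{R})$ for $B\subset\mathcal{A}$, and a compactness/diagonal extraction of backward preimages for $\mathcal{A}\subset B$---is precisely the classical proof one finds in the cited reference.
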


Let us introduce some notations that will be used in this paper. Consider a semigroup $\mathcal{T}=\{\mathcal{T}(t):t\ge0\}\subset\mathcal{C}(M)$ in a metric space $(M,d)$. We  denote 
    \begin{enumerate}
    \item The \textbf{positive orbit} of $S\subset M$ by $$\gamma^+(S):=\bigcup\limits_{t\ge0}\{\mathcal{T}(t)x:x\in S\}.$$
   \item The negative orbit of $x\in M$ by $$\gamma^-(x)=\{\phi(t):\phi \ \text{is a global solution through}\ x, t\le 0\}.$$    Similarly,  we denote the \textbf{negative orbit} of $S\subset M$ by
   \begin{equation*}
       \gamma^-(S):=\bigcup\limits_{x\in S}\gamma^-(x).
   \end{equation*}
  In particular, if $\mathcal{T}(t):M\to M$ is bijective for each $t\ge0$, with inverse denoted by $\mathcal{T}(-t)$, then $\gamma^-(S)=\bigcup\limits_{t\ge0}\{\mathcal{T}(-t)x:x\in S\}$.
    \item The \textbf{orbit} of $S\subset M$  by
    \begin{equation*}
        \gamma(S)=\gamma^+(S)\cup\gamma^-(S).
    \end{equation*}
    \end{enumerate}

We will also stablish the following notations through this paper: if $X,Y$ are normed vector spaces, we will denote the space of continuous linear operators from $X$ into $Y$ by $\mathcal{L}(X,Y)$. We also define $\mathcal{L}(X):=\mathcal{L}(X,X)$.
If $F:X\to X$ is a differentiable map in a Banach space $X$, then we will denote the Fréchet derivative of $F$ at $x\in X$ by $D_xF\in\mathcal{L}(X).$
If $\mathcal{T}=\{\mathcal{T}(t):t\ge0\}\subset \mathcal{C}^1(X)$ is a  semigroup in a Banach space $X$ and $x^*$ is an equilibrium point of $\mathcal{T}$,  we  denote the inverse image of $E\subset X$ by the derivative of $\mathcal{T}(t)$ at $x^*$ by
\begin{equation*}
    (D_{x^*}\mathcal{T}(-t))E:=(D_{x^*}\mathcal{T}(t))^{-1}E=\{z\in X: (D_{x^*}\mathcal{T}(t))z\in E\}
\end{equation*}
If $F:X\to Y$ is a map and $C\subset X$ is a subset, then we will denote the range of $F$ by $R(F)$ and the restriction of $F$ in $C$ by $F|_C:C\to Y$.

Now we introduce the notions of hyperbolicity and non-wandering points, that are required to define Morse-Smale semigroups. Let us start with the notion of hyperbolic fixed points in Banach spaces \cite{henry1994exponential,Bortolan-Carvalho-Langa-book}. 

\begin{definition}
 \label{hyperbolicset}
Let $\mathcal{T}=\{\mathcal{T}(t):t\ge 0\}\subset\mathcal{C}^1(X)$ be a semigroup in a Banach space $X$ and $x^*$ be an equilibrium point of $\mathcal{T}$. Suppose that for each $t\ge0$ the linear operator $D_{x^*}\mathcal{T}(t)$ is injective. We say that $x^*$ is hyperbolic if it satisfies:
\begin{enumerate}
    \item There exists a continuous projection $P(x^*):X\to X$ such that   $$S_0(x^*)\oplus U_0(x^*)=X,$$ where $U_0(x^*)=R(P(x^*))$ and $S_0(x^*)=R(I-P(x^*))$;
    \item $dim\ U_0(x^*)<+\infty$;
    \item  $(D_{x^*}\mathcal{T}(t))P(x^*)=P(x^*)(D_{x^*}\mathcal{T}(t))$ for all $t\ge0$ and the operator $$(D_{x^*}\mathcal{T}(t))|_{U_0(x^*)}:U_0(x^*)\to U_0(x^*)$$ is an isomorphism. Consequently,
    \begin{equation*}
        (D_{x^*}\mathcal{T}(t))U_0(x^*) = U_0(x^*)\ \forall t\in\mathbb{R} \ \ \text{and}\ \ \ (D_{x^*}\mathcal{T}(t))S_0(x^*)\subset S_0(x^*),\ \forall t\ge 0; 
    \end{equation*}
         \item There exists $C>0, \lambda\in (0,1)$  such that
        \begin{align*}
            \parallel (D_{x^*}\mathcal{T}(t))v^s\parallel&\le C\lambda^t\parallel v^s\parallel,\ \forall v^s\in S_0(x^*),\ \forall t\ge 0,\\
           \parallel (D_{x^*}\mathcal{T}(-t))v^u\parallel&\le C\lambda^t\parallel v^u\parallel,\ \forall v^u\in U_0(x^*),\ \forall t\ge 0.
        \end{align*}
        The constants $C,\lambda$ are called constants of hyperbolicity of $x^*$.
    \end{enumerate}
\end{definition}

\begin{definition}
	Let $(M,d)$ be a metric space and $\mathcal{T}=\{\mathcal{T}(t):t\ge 0\}$ be a semigroup in $M$.
	We say that $x\in M$ is a \textbf{non-wandering} point of $\mathcal{T}$ if for any $t_0\ge 0$ and any neighborhood $V_x$ of $x$ 
	there exists $t > t_0$ such that
	$\mathcal{T}(t)V_x\cap V_x\neq \emptyset$. We will denote the set of non-wandering points of $\mathcal{T}$ by $\Omega$.
\end{definition} 
Note that if $x_0\in M$ is such that $\mathcal{T}(t^*)x_0=x_0$ for some $t^*>0$ , then $x_0\in \Omega$. In particular, equilibrium points and periodic orbits are  inside the non-wandering set $\Omega$.

Now we can finally define Morse-Smale semigroups, that is  the class of dynamical systems that we will work throughout this paper.
\begin{definition}[Morse-Smale]\label{definitionms}
	Let $X$ be a Banach space and  
	$\mathcal{T}=\{\mathcal{T}(t):t\geq 0\}\subset \mathcal{C}^1(X)$ be a semigroup with global attractor 	$\mathcal{A}$.
	We say that $\mathcal{T}$ is a \textbf{Morse-Smale} semigroup if it satisfies the following conditions:
	\begin{enumerate}
	   \item\label{tinjetora} $\mathcal{T}(t)|_\mathcal{A}$  is injective for all $t\ge 0$.
	\item\label{derivadainjetora} The Fréchet derivative $D_z\mathcal{T}(t)\in\mathcal{L}(X)$ of $\mathcal{T}(t)$ at $z$ is an 
	isomorphism onto its image for all $z\in \mathcal{A}$ and $t\ge 0$.
	\item The non-wandering set of $\mathcal{T}$ is given just by the set of equilibrium points $\Omega=\{x^*_1,\cdots,x^*_p\}$, which is a finite set.  Moreover, each equilibrium point $x^*_i$ is  hyperbolic. In particular, $\mathcal{T}$ does not have periodic orbits.
	\item\label{dimensaovariedade}  $\dim W^u_{loc}(x^*)< \infty$ for every $x^*\in\Omega$, where $dim$ is the dimension related to the differentiable manifold.
	\item  $W^u(x^*_i)$ and
	$W^s_{loc}(x^*_j)$ are transverse for all $i\neq j$, i.e.,
	if $z\in W^u(x^*_i)\cap W^s_{loc}(x^*_j)$, then
	\begin{equation*}
	T_zW^u(x^*_i)+T_zW^s_{loc}(x^*_j)=X,
	\end{equation*}
	where $T_zW^u(x^*_i)$ and $T_zW^s_{loc}(x^*_j)$ are the tangent spaces of  
	$W^u(x^*_i)$ and $ W^s_{loc}(x^*_j)$ at $z$, respectively.
	\end{enumerate}
\end{definition}

The following Remark explains why items (4) and (5) from Definition \ref{definitionms} are well defined.
\begin{remark}\label{bebosim}
    Let $X$ is a Banach space, $\mathcal{T}=\{\mathcal{T}(t):t\ge0\}\subset \mathcal{C}^1(X)$ be a semigroup that satisfies items (1) and (2) from Definition \ref{definitionms} and $x^*$ be a hyperbolic equilibrium point of $\mathcal{T}$. Consider the subspaces $S_0(x^*)$ and $U_0(x^*)$, defined in Definition \ref{hyperbolicset}, related to the hyperbolicity of $x^*$. Then the local unstable manifold $W_{loc}^u(x^*)$ and local stable manifold $W^s_{loc}(x^*)$ are  graphs of  $\mathcal{C}^1$-maps $\Psi_u:U_0(x^*)\to S_0(x^*) \ \ \text{and}\ \ \Psi_s:S_0(x^*)\to U_0(x^*)$, respectively \cite[Chapter 4]{Bortolan-Carvalho-Langa-book}. Consequently $W^u_{loc}(x^*)$ and  $W^s_{loc}(x^*)$  are $\mathcal{C}^1$- manifolds. Moreover, the unstable manifold  $W^u(x^*)$ is a $\mathcal{C}^1$-immersed manifold for Morse-Smale semigroups, since the dimension of the manifold $W^u_{loc}(x^*)$ is finite \cite[Chapter 4]{Bortolan-Carvalho-Langa-book}. Note that it is not clear if $W^s(x^*)$ is a manifold. 
    
    Therefore,  the dimensions of the manifolds $W^u_{loc}(x^*)$ and $\dim W^u_{loc}(x^*)$, in item (4) of Definition \ref{definitionms}, are well defined. The same reasoning shows that the tangents spaces $T_zW^u(x_i^*)$ and $T_zW^s_{loc}(x_j^*)$, in item (5), are well defined. Moreover,
    \begin{equation*}
    T_{x^*}W^u(x^*)=U_0(x^*)\,\,\, \text{and}\,\,\, T_{x^*}W^s_{loc}(x^*)=S_0(x^*)
\end{equation*}
for each hyperbolic equilibrium point $x^*$ of $\mathcal{T}$  \cite[Chapter 4]{Bortolan-Carvalho-Langa-book}.
\end{remark}
 
   We highlight that the usual definition of infinite dimensional Morse-Smale semigroups allows periodic orbits in the non-wandering set $\Omega$ and just demand that the derivative $D_x\mathcal{T}(t)$  is a bounded injective operator, for all $t>0$ and $x\in\mathcal{A}$ (see e.g. \cite{Bortolan-Carvalho-Langa-book}).  Since we will assume that $\Omega$ consists in the equilibrium points of $\mathcal{T}$ and that $D_x\mathcal{T}(t)$ is an isomorphism onto its range throughout the whole manuscript, we have redefined  Morse-Smale semigroups as in Definition \ref{definitionms} in order to simplify the text. This definition still generalizes the concept of Morse-Smale diffeomorphisms in finite dimension, in the particular case where $\Omega$ just has equilibrium points.

Let us provide an example of a Morse-Smale semigroup that satisfies all assumptions from Theorem \ref{teointroducao}.

 \begin{example}\label{exemploms}
       Let $\Omega\subset\mathbb{R}^3$ be  a smooth bounded domain and consider the damped wave equation 
       \begin{equation}\label{dwequation}
    \begin{cases}
      u_{tt}+\gamma u_t-\Delta u=f(u),\ \ \  u|_{\partial\Omega=0}\\
      u(x,0)=u_0\in H^1_0(\Omega),\\
      u_t(x,0)=v_0\in L^2(\Omega).
    \end{cases}\,.
\end{equation}
     
Assume that $f:\mathbb{R}\to\mathbb{R}$ is a $C^2$ function satisfying 
       \begin{equation}\label{cond1}
\limsup_{|s|\to+\infty}\dfrac{f(s)}{s}\le0
       \end{equation}
       and
       \begin{equation}\label{cond2}
           |f'(s)|\le k(1+|s|^p),\ \forall s\in\mathbb{R}
       \end{equation}
       for some $k>0$ and $p\in[0,2)$.
       In \cite[Chapter 15]{carvalho2012attractors} the authors show that, under this conditions of $f$, equation \eqref{dwequation} is well posed and its solutions define a semigroup $T_f=\{T_f(t):t\ge0\}\subset\mathcal{C}^1(X)$, with global attractor, in the phase space $X=H^1_0(\Omega)\times L^2(\Omega)$. The semigroup $T_f$ satisfies conditions (H1) and (H2). Moreover, for any neighborhood $\mathcal{O}\ni f$  in the $C^2$-strong Whitney topology (see Chapter 3.4 of \cite{Pilyugin}) there exists $g\in \mathcal{O}$ such that $T_g$ is a
        Morse-Smale semigroup.

In \cite{carvalho2012attractors} the authors prove the well-posedness of the damped wave equation in $X=H^1_0(\Omega)\times L^2(\Omega)$ and that the semigroup $\{T_f(t):t\ge0\}$ generated by its solutions has global attractor, is gradient and $T_f(t)\in \mathcal{C}^1(X)$ for each $t\ge0$. Properties (H1) and (H2) follow from \cite[Theorem 6.33]{carvalho2012attractors} and the Gronwall-inequality. Property (H2) is an immediately consequence of \cite[Theorem 6.33]{carvalho2012attractors}.

The proof that the Morse-Smale property is generic in the Whitney topology can be found in \cite{brunovsky2003genericity}.
 \end{example}

    We recall that the eigenvalues of the linear operator associated to the damped wave equation do not satisfy the gap condition demanded to obtain an inertial manifold (see \cite{robinson2002computing,sell2013dynamics}). Therefore, it is not possible to guarantee the Shadowing property in a neighborhood of its global attractor by the previous  results, since the existence of an inertial manifold was required (see \cite[Section 3.4]{Pilyugin}). Despite that, since the semigroup related to the damped wave equation is Morse-Smale and satisfies (H1) and (H2), we will use the results from Section \ref{sectionneighbo} to show that the Shadowing property still holds in a  neighborhood of its global attractor.

Before we finish this section, we now provide some properties of Morse-Smale semigroups.
\begin{lemma}\label{lemma-Lyapunov-function}
	Let $X$ be a Banach space and $\mathcal{T}=\{\mathcal{T}(t):t\ge0\}$ be a Morse-Smale semigroup in $X$ with non-wandering set $\Omega=\{x^*_1,\dots,x^*_p\}$. Then
\begin{enumerate}
    \item There exists a continuous function $V:X\to [0,+\infty)$, called Lyapunov function, such that
	\begin{itemize}
		\item $V(x^*_i)=i$, for all $1\leq i\leq p$.
		\item If $x\notin \Omega$, then $[0,+\infty)\ni t\mapsto V(\mathcal{T}(t)x)$ is strictly decreasing. 
	\end{itemize}
\item For each $x\in\mathcal{A}$ there exist exactly two equilibrium points $x_i^*,x_j^*\in\Omega$ and an unique global solution $\phi$ through $x$ such that
\begin{equation*}
    \lim\limits_{t\to-\infty}d(\phi(t),x_i^*)=0\ \text{and}\ \lim\limits_{t\to+\infty}d(\phi(t),x_j^*)=0.
\end{equation*}
In particular,
\begin{equation}\label{caraatrator}
    \mathcal{A}=\bigcup_{i=1}^p W^u(x^*_i).
\end{equation}
\item There is no homoclinic structure in $X$ (see Definition \ref{homoclinic}).
\end{enumerate}    
\end{lemma}
\begin{proof}
    It is known that  Morse-Smale semigroups are Dynamically Gradient \cite{Bortolan-Carvalho-Langa-book}, that is, satisfy items (2) and (3).  In \cite{aragao2013non} the authors prove that Dynamically Gradient semigroups are Gradient (it has a Lyapunov function). In fact, the concepts of Gradient and Dynamically Gradient are equivalent \cite{Bortolan-Carvalho-Langa-book}.
\end{proof}

\section{Lipschitz Shadowing in the Global Attractor}\label{sectionnoatrator}
In this section we will prove item (1) of Theorem \ref{teointroducao}, that is, we will show that the Lipschitz Shadowing property holds for $\mathcal{T}(1)|_{\mathcal{A}}:\mathcal{A}\to \mathcal{A}$, where $\mathcal{A}$ is the global attractor of a Morse-Smale semigroup $\mathcal{T}$. Therefore, in this section we will only consider the semigroup $\mathcal{T}=\{\mathcal{T}(t):t\ge0\}$ restricted to the global attractor $\mathcal{A}$.  We recall that item (1) of Theorem \ref{teointroducao} extends the results of Lipschitz Shadowing known for Morse-Smale diffeomorphisms defined in compact smooth manifolds \cite[Chapter 2]{Pilyugin} and $\mathbb{R}^n$ \cite{santamaria2014distance}. Moreover, we are not assuming the existence of an inertial manifold and therefore the results from this section are not a consequence of \cite[Chapter 3]{Pilyugin}.

To prove item (1) of Theorem \ref{teointroducao} we have to construct a family of compatible subbbundles, as in the proof of the finite dimensional case  \cite{Pilyugin}. This construction is very technical and need several adjustments in the infinite dimensional setting. We have divided this section in three subsections: in Subsection \ref{subseccontinuidade} we will define the notion of continuity of subbundles and prove several results related to this concept. In Subsection \ref{subseccompatible} we use the results about continuity of subbundles, given in  Subsection \ref{subseccontinuidade}, to construct the compatible subbundles.  Finally, in Subsection \ref{subsec33}, we exploit the geometric properties of the compatible subbundles to obtain sufficient conditions for applying the Banach fixed point theorem and conclude the proof of Lipschitz Shadowing on $\mathcal{A}$.

Since some of the concepts and proofs of this section can be found in the proof of the finite dimensional case \cite[Chapter 2]{Pilyugin}, in this section we will only  prove  the results whose  proofs do not follow from the finite dimensional case.  In any case, we will enunciate every essential result.  From now on  $\mathcal{T}=\{\mathcal{T}(t):t\ge0\}\subset\mathcal{C}^1(X)$ will be a Morse-Smale semigroup in a Hilbert space $X$, with global attractor $\mathcal{A}$ and  non-wandering set $\Omega=\{x_1^*,\dots,x_p^*\}$, given by a finite amount of hyperbolic equilibrium points of $\mathcal{T}$. Through this section we will  assume that $\mathcal{T}$ satisfies items (H1) and (H2) from Theorem \ref{teointroducao}.

 Since $\mathcal{T}(t)|_{\mathcal{A}}:\mathcal{A}\to\mathcal{A}$ is a homeomorphism for each $t\ge0$ (from item 1 of Definition \ref{definitionms} and the invariance of the global attractor), from now on we will denote $\mathcal{T}(-t):=(\mathcal{T}(t)|_{\mathcal{A}})^{-1}$ for each $t\ge0 $. If $E\subset X$, $x\in\mathcal{A}$ and $t\ge0$  we will denote
\begin{equation*}
    (D_x\mathcal{T}(-t))E:=(D_{\mathcal{T}(-t)x}\mathcal{T}(t))^{-1}E=\{z\in X: (D_{\mathcal{T}(-t)x}\mathcal{T}(t))z\in E\}.
\end{equation*}
Moreover, since the linear operator  $D_{\mathcal{T}(-t)x}\mathcal{T}(t)\in\mathcal{L}(X)$ is  an isomorphism onto its range for each $x\in\mathcal{A}$, then we   denote 
\begin{equation*}
    D_x\mathcal{T}(-t):=(D_{\mathcal{T}(-t)x}\mathcal{T}(t))^{-1}\in\mathcal{L}(R(D_{\mathcal{T}(-t)x}\mathcal{T}(t)), X),\ \forall x\in\mathcal{A},\ \forall t\ge0.
\end{equation*}

Finally, we will denote by $V$ a Lyapunov function for $\T$ satisfying the properties from Lemma \ref{lemma-Lyapunov-function}.

The following results are technical lemmas, whose proofs follow exactly as in the finite dimensional case \cite[Chapter 2]{Pilyugin}.
\begin{lemma}\label{lemma2.2.5}
	Fix $i\in\{1,\dots,p\}$ and $d\in(0,1)$ and define the set 
	\begin{equation*}
	D:=V^{-1}(i+d)\cap W^s(x^*_i)\cap\mathcal{A}.
	\end{equation*}
	For any neighborhood $Q$  of $D$ in $V^{-1}(i+d)\cap\mathcal{A}$ the set 
	$\gamma^+(Q)\cup W^u(x^*_i)$ contains a neighborhood (in $\mathcal{A}$) of $x^*_i$ .
\end{lemma}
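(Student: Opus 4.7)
The plan is to argue by contradiction. Suppose the conclusion fails; then there exists a sequence $\{y_n\}\subset\mathcal{A}$ with $y_n\to x_i^*$ and $y_n\notin \gamma^+(Q)\cup W^u(x_i^*)$ for every $n$. Using the characterization $\mathcal{A}=\bigcup_{j=1}^{p}W^u(x_j^*)$ from Lemma \ref{lemma-Lyapunov-function} and the fact that $y_n\notin W^u(x_i^*)$, each $y_n$ lies on a non-constant global solution issuing from some equilibrium $x^*_{j(n)}$ with $j(n)\neq i$. Strict monotonicity of $V$ off equilibria then gives $V(x^*_{j(n)})=j(n)>V(y_n)$. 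Since $V(y_n)\to V(x_i^*)=i$, we may assume $V(y_n)<i+d$ for all $n$, which forces the integer $j(n)$ to satisfy $j(n)\geq i+1>i+d$. Applying the intermediate value theorem to the continuous map $t\mapsto V(\mathcal{T}(-t)y_n)$, whose values run in $[V(y_n),j(n)]$ as $t$ ranges over $[0,+\infty)$, we obtain $t_n>0$ such that $z_n:=\mathcal{T}(-t_n)y_n\in H$. By construction $y_n=\mathcal{T}(t_n)z_n$, and since $y_n\notin\gamma^+(Q)$ we must have $z_n\in H\setminus Q$.

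Next I would exploit that $H$ is compact, as a closed subset of the compact attractor $\mathcal{A}$, to extract a subsequence with $z_n\to z^*\in H\setminus Q\subset H\setminus D$; in particular $z^*\notin W^s(x_i^*)$. A short argument then shows that $t_n\to +\infty$: otherwise, passing to a further subsequence with $t_n\to t^*<+\infty$, continuity of $\mathcal{T}(t^*)$ would give $\mathcal{T}(t^*)z^*=\lim y_n=x_i^*$, hence $\mathcal{T}(t)z^*=x_i^*$ for all $t\geq t^*$, placing $z^*$ in $W^s(x_i^*)$ and contradicting $z^*\notin D$.

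The core of the argument, and the step I expect to be the main obstacle, is to convert $z^*\notin W^s(x_i^*)$ into a contradiction with $\mathcal{T}(t_n)z_n\to x_i^*$. For this I would invoke the dynamically gradient structure from Lemma \ref{lemma-Lyapunov-function}: the forward orbit of $z^*$ converges to some equilibrium $x^*_k$. Non-increasingness of $V$ along this orbit together with $V(z^*)=i+d<i+1$ forces $k\leq i$, and $z^*\notin W^s(x_i^*)$ rules out $k=i$, so $k<i$. Fix any $\varepsilon\in(0,(i-k)/2)$ and choose $T>0$ so large that $V(\mathcal{T}(T)z^*)<k+\varepsilon$. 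Continuity of $\mathcal{T}(T)$ and of $V$ then gives $V(\mathcal{T}(T)z_n)<k+\varepsilon$ for all sufficiently large $n$, while $y_n\to x_i^*$ gives $V(y_n)>i-\varepsilon$ for all sufficiently large $n$. Using that $t_n>T$ eventually, the monotonicity of $V$ along the orbit through $z_n$ yields
\begin{equation*}
i-\varepsilon<V(y_n)=V(\mathcal{T}(t_n)z_n)\leq V(\mathcal{T}(T)z_n)<k+\varepsilon,
\end{equation*}
so $i-k<2\varepsilon$, contradicting the choice of $\varepsilon$. This final contradiction establishes the lemma.
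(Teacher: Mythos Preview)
Your proof is correct and follows the standard compactness-plus-Lyapunov-function argument that the paper defers to Pilyugin for (the paper gives no proof of this lemma, citing \cite[Chapter 2]{Pilyugin}). One cosmetic point: the clause ``$V(y_n)<i+d$ \ldots\ forces the integer $j(n)$ to satisfy $j(n)\geq i+1$'' is not quite the right implication---what you actually use is that $V(y_n)\to i$ also gives $V(y_n)>i-1$ for large $n$, which together with $j(n)>V(y_n)$, $j(n)\in\mathbb{Z}$, and $j(n)\neq i$ yields $j(n)\geq i+1$.
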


\begin{lemma}\label{lemma2.2.6}
        Let $N$ be a neigborhood (in $\mathcal{A}$) of $x^*_i$ , for some $i\in\{1,\dots,p\}$. Then, there exists $d'\in(0,1)$ such that
        $$V^{-1}(i+d)\cap W^s(x^*_i)\cap\mathcal{A}\subset N, \forall\ d\in[0,d'].$$
\end{lemma}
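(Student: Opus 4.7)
The plan is to argue by contradiction, using only three ingredients: compactness of $\mathcal{A}$, continuity of the Lyapunov function $V$, and the strict decrease of $V$ along non-equilibrium orbits (Definition \ref{defgradiente}(2) together with Lemma \ref{lemma-Lyapunov-function}).

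Suppose the conclusion fails. Then there exist a neighborhood $N$ of $x_i^*$ in $\mathcal{A}$, a sequence $d_n \to 0^+$, and points
$$ x_n \in V^{-1}(i+d_n) \cap W^s(x_i^*) \cap \mathcal{A}, \qquad x_n \notin N. $$
By compactness of $\mathcal{A}$ and closedness of $\mathcal{A}\setminus N$, I extract a subsequence (not relabeled) with $x_n \to y \in \mathcal{A}\setminus N$; in particular $y\neq x_i^*$ since $x_i^*$ lies in the interior of $N$.

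The crucial step is to show that $V(\mathcal{T}(t)y) = i$ for every $t\ge 0$. Fix $t\ge 0$. Since $x_n \in W^s(x_i^*)$, one has $\mathcal{T}(s)x_n \to x_i^*$ as $s\to+\infty$, hence $V(\mathcal{T}(s)x_n) \to V(x_i^*)= i$. Combining this with the non-increasing character of $s\mapsto V(\mathcal{T}(s)x_n)$ gives the sandwich
$$ i \;\le\; V(\mathcal{T}(t)x_n) \;\le\; V(x_n) \;=\; i + d_n. $$
Letting $n\to\infty$ and using continuity of $\mathcal{T}(t)$ and of $V$, I obtain $V(\mathcal{T}(t)y) = i$ for every $t\ge 0$. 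By Definition \ref{defgradiente}(2) this forces $y\in\Omega$, and since the values $V(x_k^*)=k$ are distinct, $y=x_i^*$, contradicting $y\notin N$.

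I do not anticipate any substantial obstacle: the infinite dimension of $X$ plays no role, and no appeal to transversality, the local stable manifold theorem, inertial manifolds, or Lemma \ref{lemma2.2.5} is needed. The entire argument is driven by compactness of $\mathcal{A}$ and the Lyapunov structure inherited from Lemma \ref{lemma-Lyapunov-function}; the only point requiring mild care is checking that the uniform bound $V(\mathcal{T}(t)x_n)\in[i,i+d_n]$ holds for \emph{all} $t\ge 0$ simultaneously, which follows directly from monotonicity plus convergence to $x_i^*$.
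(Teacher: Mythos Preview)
Your proof is correct and is precisely the standard compactness-plus-Lyapunov contradiction argument that the paper has in mind when it says the proof ``follows exactly as in the finite dimensional case'' and defers to \cite[Chapter 2]{Pilyugin}. The only cosmetic point is that a ``neighborhood'' need not be open, so strictly speaking $\mathcal{A}\setminus N$ need not be closed; but replacing $N$ by its interior (which still contains $x_i^*$) handles this, and the rest of your argument goes through verbatim.
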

 Since $\mathcal{T}(t)|_{\mathcal{A}}:\mathcal{A}\to \mathcal{A}$ is a homeomorphism, Lemmas \ref{lemma2.2.5} and \ref{lemma2.2.6} also holds for the unstable manifold. For example, regarding Lemma \ref{lemma2.2.6}, given a neigborhood $N$ (in $\mathcal{A}$) of $x^*_i$ then there exists $d'\in(0,1)$ such that
        $$V^{-1}(i-d)\cap W^u(x^*_i)\cap\mathcal{A}\subset N, \forall\ d\in[0,d'].$$
Analogously,  Lemma \ref{lemma2.2.5} also holds for the unstable manifold of $x_i^*$.
\subsection{Continuity of Subbundles}\label{subseccontinuidade}

In this subsection we introduce the notion of continuity for a family of vector subspaces and prove several properties related to this concept. The results of this subsection will be required later, in Subsection \ref{subseccompatible}, to construct the compatible subbundles.

Let us introduce two notions of continuity of subbundles.
\begin{definition}\label{projecaoz}
    Let $X$ be a Hilbert space and $(M,d)$ be a metric space. For each $m\in M$ fix a closed subspace $E(m)$ of $X$. 
    We say that the family $\{E(m):m\in M\}$ is \textbf{continuous} if there exists a family of projections $\{P(m):m\in M\}\subset \mathcal{L}(X)$ such that
    $m\mapsto P(m)$ is continuous in $\mathcal{L}(X)$ and $R(P(m))=E(m)$ for all $m\in M$. We will often refer a continuous family $E$ as a continuous subbundle on $M$.
\end{definition}
\begin{definition}\label{projecao}
    Let $X$ be a Hilbert space and $(M,d)$ a metric space. For each $m\in M$ fix a closed subspace $E(m)$ of $X$ and take $P(m)\in\mathcal{L}(X)$ as the orthogonal projection onto $E(m)$. We say that the family $\{E(m):x\in M\}$ is {\bf orthogonally continuous} if the family of projections $\{P(m):m\in M\}$ is continuous in $\mathcal{L}(X)$.
\end{definition}

 The following notation will be use through this whole section.
\begin{remark}\label{defprosepa}
    Let $X$ be a Banach space. If we have the direct sum $V\oplus W=X$, with both subspaces $V,W$ closed, we will denote by $P_{VW}$ the projection $x_v+x_w\mapsto x_v$. Note that $P_{VW}$ is continuous, since $V,W$ are closed, and $P_{VW}+P_{WV}=I$, where $I$ is the identity. In this notation, if $X$ is a Hilbert space, the projection $P_{VV^{\perp}}$ is the orthogonal projection onto the closed subspace $V$ and we simplify this notation by $P_V:=P_{VV^{\perp}}$.
\end{remark}

\begin{remark}\label{permanenciadim}
From Lemma \ref{projfechada} it follows  that if $X$ is a Banach space and $\{E(m):m\in M\}$ is a continuous family of subspaces of $X$, then for each $m_0\in M$ there exists a neighborhood $V_{m_0}\ni m_0$ such that $\dim E(m)=\dim E(m_0)$ for all $m\in V_{m_0}$.
\end{remark}
The next result shows that Definitions \ref{projecaoz} and \ref{projecao} are equivalent in Hilbert spaces.
\begin{lemma}\label{equivdefproj}
Let $X$ be a Hilbert space and $Q_n,Q\in\mathcal{L}(X)$ be continuous projections such that $Q_n\to Q$ in $\mathcal{L}(X)$. 
Denoting $S_n:=R(Q_n)$ and $S=R(Q)$, define the orthogonal projections 
$$P_n:=P_{S_n}\ \text{and}\ P:=P_{S}.$$
Then $P_n\to P$ in $\mathcal{L}(X)$.
\end{lemma}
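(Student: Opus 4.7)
My plan is to show directly that $\|P_n - P\|_{\mathcal{L}(X)} \le 3\,\|Q_n - Q\|_{\mathcal{L}(X)}$, which immediately gives $P_n\to P$. The first step is to use the hypothesized convergence $Q_n\to Q$ to compare the ranges $S_n$ and $S$ quantitatively. For any $y\in S$, since $Qy=y$, the vector $Q_n y$ lies in $S_n$ and
$$\|y - Q_n y\| = \|(Q-Q_n)y\| \le \|Q_n - Q\|\,\|y\|.$$
Symmetrically, for any $u\in S_n$ we have $Q_n u = u$ and $Qu\in S$ with $\|u - Qu\|\le \|Q_n - Q\|\,\|u\|$. Thus the gaps between $S_n$ and $S$ are uniformly controlled by $\|Q_n - Q\|$.

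Next, given $x\in X$ I decompose orthogonally $x = y + z$ with $y = Px\in S$ and $z = (I-P)x\in S^\perp$, and I will bound $\|P_n y - y\|$ and $\|P_n z\|$ separately. For the first term, $Q_n y\in S_n$ forces $P_n(Q_n y) = Q_n y$, so
$$P_n y - y = (Q_n y - y) + P_n(y - Q_n y),$$
and since $\|P_n\|=1$, combining with Step 1 gives $\|P_n y - y\| \le 2\|Q_n - Q\|\,\|y\|$.

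The delicate term is $\|P_n z\|$, because $z$ is orthogonal to $S$ but need not be orthogonal to $S_n$. This is the step where the Hilbert space structure is essential: since $P_n z\in S_n$,
$$\|P_n z\| = \sup_{u\in S_n,\,\|u\|\le 1}\, \langle P_n z, u\rangle = \sup_{u\in S_n,\,\|u\|\le 1}\, \langle z, u\rangle.$$
For $u\in S_n$ I write $u = Qu + (u - Qu)$ with $Qu\in S$; since $z\perp S$, we have $\langle z, Qu\rangle = 0$, hence $\langle z, u\rangle = \langle z, u - Qu\rangle$, so by Step 1, $|\langle z, u\rangle|\le \|z\|\,\|Q_n - Q\|$ whenever $\|u\|\le 1$. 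This yields $\|P_n z\|\le \|Q_n - Q\|\,\|z\|$.

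Putting the two estimates together, using $\|y\|,\|z\|\le \|x\|$ from Pythagoras,
$$\|P_n x - Px\| = \|P_n y - y + P_n z\| \le 2\|Q_n - Q\|\,\|y\| + \|Q_n - Q\|\,\|z\| \le 3\|Q_n - Q\|\,\|x\|,$$
which is the desired bound. I expect the main obstacle to be the control of $P_n z$ for $z\in S^\perp$: a priori, a vector orthogonal to $S$ could have sizeable projection onto $S_n$ if the subspaces were close only in a weak one-sided sense. The duality identity $\|P_n z\| = \sup_{u\in S_n,\,\|u\|\le 1}\langle z, u\rangle$, combined with the symmetric gap estimate coming from $Q$, is precisely what neutralizes this difficulty and is where the Hilbert (rather than Banach) assumption is used.
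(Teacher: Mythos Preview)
Your proof is correct. The overall strategy matches the paper's: both arguments reduce to controlling, on one side, how far a vector of $S$ is from $S_n$ (your $\|P_ny-y\|$ term, the paper's $\|P(I-P_n)\|$), and on the other, how much of a vector in $S^\perp$ leaks into $S_n$ (your $\|P_nz\|$ term, the paper's $\|(I-P)P_n\|$); in both cases the key input is that $Q_ny\to y$ for $y\in S$ and $Qu\approx u$ for $u\in S_n$. The paper bounds $\|(I-P)P_n\|$ via the nearest-point characterization of $P$ and then invokes the auxiliary identity $\|PQ\|=\|QP\|$ for orthogonal projections (Lemma~\ref{gj}) to handle the second piece, whereas you treat both pieces directly using the duality formula $\|P_nz\|=\sup_{u\in S_n,\,\|u\|\le 1}\langle z,u\rangle$. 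Your route is slightly more self-contained and yields the explicit quantitative estimate $\|P_n-P\|\le 3\|Q_n-Q\|$, which the paper's argument does not make explicit.
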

\begin{proof}
It is sufficient to show that $P(I-P_n)\to 0\ \text{and}\ (I-P)P_n\to 0$ in $\mathcal{L}(X)$.
We will prove that $(P-I)P_n\to 0$ in $\mathcal{L}(X)$ and the other case will follow 
analogously by Lemma  \ref{gj}.
Given $v\in X$, $\parallel v\parallel=1$, it follows from the fact that $P_n, P$ are orthogonal projections that 
\begin{align*}
    \parallel (P-I)P_nv\parallel&=\parallel PP_nv-P_nv\parallel\\
    &\le\parallel QP_nv-P_nv\parallel\\
    &=\parallel QQ_nP_nv-Q_nP_nv\parallel\\
    &\le \parallel (Q-I)Q_n\parallel_{\mathcal{L}(X)}\to 0.
\end{align*}
\end{proof}

        From now on, for each $x\in W^s_{loc}(x^*_i)\cap W^u(x^*_j)$ we will denote by $\mathcal{S}(x)$ and $\mathcal{U}(x)$ the subspaces $T_xW^s_{loc}(x^*_i)$ and $T_xW^u(x^*_j)$ respectively, that are well defined by Remark \ref{bebosim}. It follows from \eqref{caraatrator} that $\mathcal{U}(x)$ is defined for any $x\in\mathcal{A}$  but $\mathcal{S}(x)$ is well defined only if $x$ is sufficiently close to an equilibrium point $x^*$. Now we will extend the family $\mathcal{S}$ to the whole global attractor in a way that still holds its original properties (see \eqref{miser} and \eqref{originalpro}).

\begin{proposition}\label{extensaos}
The family $\mathcal{S}$ can be extended (not necessarily continuously) to the whole attractor $\mathcal{A}$ in a way that still satisfies
\begin{equation}\label{miser}
    \mathcal{S}(x)+\mathcal{U}(x)=X,\ \forall x\in\mathcal{A}
\end{equation}
and
\begin{equation}\label{originalpro}
     (D_x\mathcal{T}(t))\mathcal{S}(x)\subset \mathcal{S}(\T(t)x),\ \forall x\in \mathcal{A},\ \forall t\ge 0.
\end{equation}
\end{proposition}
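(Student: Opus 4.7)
The plan is to exploit the gradient structure guaranteed by Lemma \ref{lemma-Lyapunov-function} and propagate $\mathcal{S}$ backwards along forward orbits via the injective derivative. For each $x\in\mathcal{A}$ I will pick the ``entry time'' of its forward orbit into a local stable manifold. By \eqref{caraatrator} there is some $i=i(x)$ with $x\in W^u(x^*_{i})$, and by Lemma \ref{lemma-Lyapunov-function} there is a unique $j=j(x)\in\{1,\dots,p\}$ with $\mathcal{T}(t)x\to x^*_{j}$ as $t\to+\infty$. Since $W^s_{loc}(x^*_{j})$ is a relative neighborhood of $x^*_{j}$ in $W^s(x^*_{j})$ and is forward invariant under $\mathcal{T}$, the set $\{t\ge0:\mathcal{T}(t)x\in W^s_{loc}(x^*_{j})\}$ is a closed half-line $[\tau(x),+\infty)$. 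Observe that along an orbit one has $\tau(\mathcal{T}(t)x)=\max\{0,\tau(x)-t\}$, which will be the key identity for invariance.

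Writing $y_x:=\mathcal{T}(\tau(x))x\in W^s_{loc}(x^*_{j})\cap W^u(x^*_{i})$, where $\mathcal{S}(y_x)=T_{y_x}W^s_{loc}(x^*_{j})$ is defined by the original setup, the proposed extension is the set-theoretic preimage
\begin{equation*}
    \mathcal{S}(x):=\bigl\{v\in X:\bigl(D_x\mathcal{T}(\tau(x))\bigr)v\in\mathcal{S}(y_x)\bigr\},
\end{equation*}
which is a closed subspace and agrees with the original $\mathcal{S}$ whenever $\tau(x)=0$. To verify \eqref{miser} I will use Morse-Smale transversality at $y_x$ to write any $(D_x\mathcal{T}(\tau(x)))v=s'+u'$ with $s'\in\mathcal{S}(y_x)$ and $u'\in\mathcal{U}(y_x)$. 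Since $\mathcal{T}(\tau(x))|_{W^u(x^*_{i})}$ is a diffeomorphism of the finite dimensional manifold $W^u(x^*_i)$ onto itself (using items \ref{tinjetora} and \ref{derivadainjetora} of Definition \ref{definitionms} together with the fact that every point in $W^u(x^*_i)$ has a backward orbit converging to $x^*_i$), its derivative restricts to a linear isomorphism $D_x\mathcal{T}(\tau(x))|_{\mathcal{U}(x)}:\mathcal{U}(x)\to\mathcal{U}(y_x)$, so $u'$ lifts to the unique $u\in\mathcal{U}(x)$ and $s:=v-u\in\mathcal{S}(x)$ by construction, giving $v=s+u\in\mathcal{S}(x)+\mathcal{U}(x)$.

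For \eqref{originalpro} I would fix $v\in\mathcal{S}(x)$, set $y:=\mathcal{T}(t)x$, and apply the chain rule together with the identity for $\tau(y)$ to obtain
\begin{equation*}
\bigl(D_y\mathcal{T}(\tau(y))\bigr)\bigl(D_x\mathcal{T}(t)\bigr)v=\bigl(D_x\mathcal{T}(\tau(y)+t)\bigr)v=\bigl(D_x\mathcal{T}(\max\{\tau(x),t\})\bigr)v.
\end{equation*}
If $t\le\tau(x)$ the right-hand side equals $(D_x\mathcal{T}(\tau(x)))v\in\mathcal{S}(y_x)=\mathcal{S}(\mathcal{T}(\tau(y))y)$, which by the definition of $\mathcal{S}(y)$ yields $(D_x\mathcal{T}(t))v\in\mathcal{S}(y)$; if $t>\tau(x)$ then $\tau(y)=0$, $y\in W^s_{loc}(x^*_{j})$, and the same chain rule together with forward invariance of the tangent bundle of $W^s_{loc}(x^*_j)$ under $D\mathcal{T}$ places $(D_x\mathcal{T}(t))v$ in $T_yW^s_{loc}(x^*_j)=\mathcal{S}(y)$. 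The main technical obstruction, absent in the finite dimensional case treated in \cite{Pilyugin}, is that $D_x\mathcal{T}(t)$ is merely injective and not surjective, which forces the pull-back to be read as a set-theoretic preimage rather than as the image under an inverse; the compensating structural fact is that the unstable tangent bundle is finite dimensional and carries an invertible derivative action, so the Morse-Smale transversality at $y_x$ can be transported back to $x$ even without surjectivity of the full derivative.
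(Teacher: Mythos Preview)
Your proposal is correct and follows essentially the same approach as the paper: both define an ``entry time'' $\tau(x)$ (the paper calls it $t_0$) into a forward-invariant piece of the local stable manifold and then set $\mathcal{S}(x)$ to be the set-theoretic preimage of $\mathcal{S}(\mathcal{T}(\tau(x))x)$ under $D_x\mathcal{T}(\tau(x))$. Your verification of \eqref{miser} via the isomorphism $D_x\mathcal{T}(\tau(x))|_{\mathcal{U}(x)}:\mathcal{U}(x)\to\mathcal{U}(y_x)$ is exactly the content of the paper's appeal to Lemma \ref{iminv} together with $\mathcal{U}(x)\subset R(D_{\mathcal{T}(-t)x}\mathcal{T}(t))$, and your explicit case split for \eqref{originalpro} fills in what the paper dismisses as a ``straightforward computation''.
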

\begin{proof}
Fix $r>0$ such that $W^s_{loc}(x^*_i)\cap B[x^*_i,r]$ is part of the graph of a $\mathcal{C}^1$ map $$\Psi:\mathcal{S}(x^*_i)\to \mathcal{U}(x^*_i)$$
and $B_i:=B[x^*_i,r]$ (the closed ball in $X$ of center $x^*_i$ and radius $r$) satisfies
\begin{equation*}
    x\in W^s(x^*_i)\cap B_i\Rightarrow \T(t)x\in  B_i,\ \forall t\ge 0.
\end{equation*}

Then $W^s(x^*_i)\cap B_i=W^s_{loc}(x^*_i)\cap B_i$ is a $\mathcal{C}^1$ manifold and  $\mathcal{S}(x)=T_xW^s_{loc}(x^*_i)$ is well defined for each $x\in W^s(x^*_i)\cap B_i$.  Given $x\in (\mathcal{A}\cap W^s(x^*_i))\setminus B_i$ fix $t_0=t_0(x)>0$  as
\begin{equation}\label{free}
   t_0=\min\{t>0: \T(t)x\in B_i\}.
\end{equation}
 Denoting $x_0=\T(t_0)x\in B_i$ we define $\mathcal{S}(x)$ as below
\begin{equation*}
    \mathcal{S}(x)=(D_x\T(t_0))^{-1}\mathcal{S}(x_0)=(D_{x_0}\T(-t_0))S(x_0).
\end{equation*}
From a straightforward computation we conclude that the extension $\{\mathcal{S}(x)\}_{x\in\mathcal{A}}$ satisfies \eqref{originalpro}. Equality \eqref{miser} follows from item (5) of Definition \ref{definitionms}, Lemma \ref{iminv} and the fact that $$U(x)=(D_{\mathcal{T}(-t)x}\mathcal{T}(t))U(\mathcal{T}(-t)x)\subset R(D_{\mathcal{T}(-t)x}\mathcal{T}(t))$$ for all $x\in\mathcal{A}$ and $t\ge0$.
\end{proof}

Now we will construct a special neighborhood (in $\mathcal{A}$) $V_i\ni x^*_i$ for each $x^*_i\in\Omega$. In fact, for each $x^*_i\in\Omega$ we want to find a neighborhood $V_i\ni x^*_i$  and continuous subbundles $S'_i, U_i'$ on $V_i$ such that 
\begin{equation}\label{remarkmudancacoordenadas}
    \begin{aligned}
    U_i'(x)=\mathcal{U}(x),\ \ \text{for all}\ \ x\in W^u(x^*_i)\cap V_i,\\
    S_i'(x)=\mathcal{S}(x), \ \ \text{for all}\ \ x\in W^s(x^*_i)\cap V_i
    \end{aligned}
\end{equation}
and
\begin{equation}\label{mudancacoordenadas}
 S_i'(x)\oplus U_i'(x) = X, \forall x\in V_i.
\end{equation}
These subbundles will be  very helpful later, since they can be used later as a ``change of coordinates''  in   Theorem \ref{lemma2.2.9}, which is the essential key to prove Lipschitz Shadowing. Lemmas \ref{dugtheorem} and \ref{lemaproj2} below are dedicated to the proof of properties \eqref{remarkmudancacoordenadas} and \eqref{mudancacoordenadas}, respectively.

\begin{lemma}\label{dugtheorem}
Let $x^*\in\Omega$ and $\delta>0$ such that $W^u_{\delta}(x^*)$ and  $W^s_{\delta}(x^*)$ are graphs (see Remark \ref{bebosim}) of  $\mathcal{C}^1-$maps 
\begin{equation*}
    \Psi_u:U_0(x^*)\to S_0(x^*) \ \ \text{and}\ \ \Psi_s:S_0(x^*)\to U_0(x^*)
\end{equation*}
where $S_0(x^*)\oplus U_0(x^*)=X$ and $U_0(x^*)$ is finite dimensional. Consider the continuous family of projections $\{P_s(x):x\in W^s_{\delta}(x^*)\}$ and $\{P_u(x):x\in W^u_{\delta}(x^*)\}$, where 
\begin{equation*}
    R(P_s(x))=T_xW^s_{\delta}(x^*)\ \ \text{and}\ \ R(P_u(x))=T_xW^u_{\delta}(x^*). 
\end{equation*}
 Then, there exist continuous extensions of these families in the open ball $B(x^*,\delta)$.
\end{lemma}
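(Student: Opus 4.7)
The plan is to extend the projections by exploiting the graph parametrizations of the local invariant manifolds. The key observation is that $W^u_\delta(x^*)$ is the graph of $\Psi_u$ over $U_0(x^*)$, so points of $W^u_\delta(x^*)$ are naturally parametrized by their $U_0(x^*)$-coordinate; dually for $W^s_\delta(x^*)$ over $S_0(x^*)$. Since $S_0(x^*)\oplus U_0(x^*)=X$, every $y\in B(x^*,\delta)$ admits a unique decomposition $y-x^* = y_s + y_u$ with $y_s\in S_0(x^*)$, $y_u\in U_0(x^*)$, so one can ``retract'' $y$ to a canonical point on each local manifold.

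Concretely, using the notation $P_{U_0 S_0}$, $P_{S_0 U_0}$ from Remark \ref{defprosepa} for the projections associated with $X = S_0(x^*)\oplus U_0(x^*)$, I would define, for $y$ in a (possibly smaller) ball around $x^*$,
\begin{align*}
\widetilde{P}_u(y) &:= P_u\bigl(x^* + P_{U_0 S_0}(y-x^*) + \Psi_u(P_{U_0 S_0}(y-x^*))\bigr),\\
\widetilde{P}_s(y) &:= P_s\bigl(x^* + P_{S_0 U_0}(y-x^*) + \Psi_s(P_{S_0 U_0}(y-x^*))\bigr).
\end{align*}
Each of these is a composition of continuous maps: the linear projections $P_{U_0 S_0}$ and $P_{S_0 U_0}$ are continuous because $S_0(x^*)$ and $U_0(x^*)$ are closed complementary subspaces; the graphing maps $\Psi_u,\Psi_s$ are $\mathcal{C}^1$; and the original families $x\mapsto P_u(x)$, $x\mapsto P_s(x)$ are continuous on their respective local manifolds. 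Since the value $\widetilde{P}_*(y)$ is the value of an actual projection operator, $\widetilde{P}_*(y)$ itself is a projection in $\mathcal{L}(X)$, and continuity of $y\mapsto \widetilde{P}_*(y)$ follows.

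To verify that this is an extension, take $y\in W^u_\delta(x^*)$ and write $y = x^* + u + \Psi_u(u)$ for some $u\in U_0(x^*)$; then $P_{U_0 S_0}(y-x^*)=u$, so $\widetilde{P}_u(y) = P_u(x^* + u + \Psi_u(u)) = P_u(y)$. The argument for $\widetilde{P}_s$ on $W^s_\delta(x^*)$ is identical. The construction must of course be carried out on a ball small enough that $P_{U_0 S_0}(y-x^*)$ stays in the domain of $\Psi_u$ where the graph $u\mapsto u+\Psi_u(u)$ lies in $W^u_\delta(x^*)$, and similarly for $\Psi_s$; this forces a possibly smaller radius $\delta'\le \delta$ (bounded by $\delta/\max\{\|P_{U_0 S_0}\|,\|P_{S_0 U_0}\|\}$ times the appropriate domain sizes), and one then applies the construction on $B(x^*,\delta')$. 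Relabeling this as the new $\delta$ gives the claim.

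The main obstacle is essentially bookkeeping rather than a conceptual difficulty: one must be careful that the chosen ball is small enough that the ``retraction'' $y\mapsto y_u + \Psi_u(y_u)$ lands in the piece of $W^u_\delta(x^*)$ on which $P_u$ was originally defined, and symmetrically for the stable side. A secondary subtlety is that the extensions $\widetilde{P}_u$ and $\widetilde{P}_s$ are not required (and in general will fail) to be complementary, i.e.\ $\widetilde{P}_u+\widetilde{P}_s\neq I$ away from $x^*$; fortunately the lemma only asks for continuous extensions of each family, so no such compatibility needs to be enforced here, and the compatibility will be treated separately in Lemma \ref{lemaproj2}.
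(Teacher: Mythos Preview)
Your proposal is correct and follows essentially the same approach as the paper: decompose each point of the ball via the hyperbolic splitting $S_0(x^*)\oplus U_0(x^*)$, retract to the relevant local manifold through the graph map, and pull back the projection from there. Your write-up is in fact more careful than the paper's (which has a typo, writing $\Psi_s(x)$ where $\Psi_s(x_s)$ is meant, and is silent about the base point $x^*$ and the possible need to shrink $\delta$).
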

\begin{proof}
Given $x\in B(x^*,\delta)$ we know from Definition \ref{hyperbolicset} that $x=x_s+x_u$, where $x_s\in S_0(x^*)$ and $x_u\in U_0(x^*)$. Define $P_s(x)=P_s(x_s+\Psi_s(x))$ and $P_u(x)=P_u(x_u+\Psi_u(x))$ for each $x\in B(x^*,\delta)$. Then $\{U_i'(x):=R(P_u(x))\}_{x\in B(x^*,\delta)}$ 
 and $\{S_i'(x):=R(P_u(x))\}_{x\in B(x^*,\delta)}$ are continuous extensions of $\mathcal{S}$ and $\mathcal{U}$ in $B(x^*,\delta)$.
\end{proof}

Now we prove property \eqref{mudancacoordenadas}. In fact, we will show that small (continuous) perturbations of direct sums are still direct sums.
\begin{lemma}\label{lemaproj2}
     Let $X$ be a Hilbert space and $(M,d)$ be a metric space. Assume that the families of closed subspaces $\{S(m):m\in M\}$ and $\{U(m):m\in M\}$ are continuous, $dim\ U(m)=n<+\infty$ for all $m\in M$ and $$S(m_0)\oplus U(m_0)=X$$ for some $m_0\in M$. Then there exists $\epsilon>0$ such that $S(m)\oplus U(m)=X$, for all $m\in B(m_0,\epsilon)$.
\end{lemma}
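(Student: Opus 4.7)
The plan is to split the required equality $S(m) \oplus U(m) = X$ into its two constituent parts, $S(m) + U(m) = X$ and $S(m) \cap U(m) = \{0\}$, and verify each on a (possibly smaller) neighborhood of $m_0$. The sum will come from a perturbation argument based on openness of the invertible operators in $\mathcal{L}(X)$, and the direct-sum part will come from a codimension count that exploits $\dim U(m) = n < \infty$.

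To begin, by Lemma \ref{equivdefproj} I may assume that the continuous projections witnessing the continuity of $S$ and $U$ are the orthogonal ones $P_{S}(m)$ and $P_{U}(m)$. The hypothesis $S(m_0) \oplus U(m_0) = X$ provides, via Remark \ref{defprosepa}, the (generally oblique) continuous projection $\pi := P_{S(m_0) U(m_0)}$ onto $S(m_0)$ along $U(m_0)$. I will then consider the continuous family
\begin{equation*}
A(m) \;:=\; P_{S}(m)\,\pi \;+\; P_{U}(m)\,(I - \pi) \;\in\; \mathcal{L}(X).
\end{equation*}
For any $x \in X$ we have $\pi x \in S(m_0)$ and $(I-\pi)x \in U(m_0)$, and since an orthogonal projection acts as the identity on its own range, $P_{S}(m_0) \pi x = \pi x$ and $P_{U}(m_0)(I-\pi)x = (I-\pi)x$; hence $A(m_0) = I$. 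By openness of the invertibles there exists $\varepsilon_1 > 0$ such that $A(m)$ is bijective for all $m \in B(m_0, \varepsilon_1)$, and since $A(m) x = P_{S}(m) \pi x + P_{U}(m)(I-\pi) x \in S(m) + U(m)$ for every $x$, surjectivity of $A(m)$ forces $S(m) + U(m) = X$ on this ball.

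To upgrade the sum to a direct sum, I observe that $\{S(m)^{\perp}\}$ is itself a continuous family of subspaces via the projections $I - P_{S}(m)$. At $m_0$, the restriction of $I - \pi$ identifies $X/S(m_0)$ with $U(m_0)$, so $\dim S(m_0)^{\perp} = n$. Remark \ref{permanenciadim} then yields $\varepsilon_2 \in (0, \varepsilon_1]$ with $\dim S(m)^{\perp} = n$, i.e.\ $\mathrm{codim}\, S(m) = n$, for every $m \in B(m_0, \varepsilon_2)$. Combined with $S(m) + U(m) = X$ and $\dim U(m) = n$, the canonical surjection $U(m) \to X/S(m)$ with kernel $U(m) \cap S(m)$ gives
\begin{equation*}
n \;=\; \dim (X/S(m)) \;=\; \dim U(m) \;-\; \dim(U(m) \cap S(m)) \;=\; n \;-\; \dim(U(m) \cap S(m)),
\end{equation*}
so $\dim(U(m) \cap S(m)) = 0$ and $\varepsilon := \varepsilon_2$ works.

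The step I expect to require most care is the construction of $A(m)$: it hybridises the oblique decomposition at $m_0$ with the orthogonal projections onto the moving subspaces, and the crucial identity $A(m_0) = I$ depends on keeping straight that an orthogonal projection is the identity on its range regardless of how the argument was produced, even though $\pi$ itself is not orthogonal. Once $A(m)$ is in place, the rest is routine continuity of projections together with the codimension count, in which the finiteness of $\dim U$ plays the decisive role.
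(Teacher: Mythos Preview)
Your proof is correct and takes a different route from the paper's. Both arguments split into the sum condition $S(m)+U(m)=X$ and the intersection condition $S(m)\cap U(m)=\{0\}$, but the paper handles them via the operators $P_S(m)+P_U(m)$ and $P_S(m)-P_U(m)$: it shows that $P_S(m_0)+P_U(m_0)$ is surjective (equivalent to $S(m_0)+U(m_0)=X$, using $S(m_0)^\perp+U(m_0)^\perp=X$, which is where $\dim U<\infty$ enters) and that $P_S(m_0)-P_U(m_0)$ is injective, then appeals to openness of surjective and injective operators in $\mathcal{L}(X)$. Your hybrid operator $A(m)=P_S(m)\pi+P_U(m)(I-\pi)$ with $A(m_0)=I$ reduces the sum part directly to openness of \emph{invertibles}, which is the most elementary such openness statement (pure Neumann series) and sidesteps the characterisation of when $P+Q$ is surjective. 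For the intersection, your codimension count via Remark~\ref{permanenciadim} makes the role of $\dim U<\infty$ completely explicit and avoids a delicate point in the paper's version: injective bounded operators are \emph{not} open in $\mathcal{L}(X)$ in infinite dimensions, and the paper's argument is really using that $P_S(m_0)-P_U(m_0)$ is a finite-rank perturbation of the identity, hence Fredholm of index zero, hence invertible---but this is not spelled out there. Your approach is thus somewhat more self-contained, at the cost of introducing the auxiliary oblique projection $\pi$.
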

	\begin{proof}
 Let $\{P(m):m\in M\}$ and $\{Q(m):m\in M\}$ be the orthogonal
 projections related to $\{S(m):m\in M\}$ and $\{U(m):m\in M\}$ respectively. 
 
 We will first prove that if $S(m_0)\oplus U(m_0)=X$ for some $m_0\in M$, then there exists $\epsilon>0$ such that $S(m)\cap U(m)=\{0\}$, for all $m\in B(m_0,\epsilon)$. Note that $S(m_0)\cap U(m_0)=\{0\}$ if and only if $P(m_0)-Q(m_0)$ is injective. Since the families of projections are continuous and the set of injective linear maps are open in $\mathcal{L}(X)$, it follows that there exists $\epsilon>0$ such that $S(m)\cap U(m)=\{0\}$, for all $m\in B(m_0,\epsilon)$.
 
 Now, we have to prove that if $S(m_0)+ U(m_0)=X$ for some $m_0\in M$, then there exists $\epsilon>0$ such that $S(m)+ U(m)=X$, for all $m\in B(m_0,\epsilon)$. 
First, we prove that 
$S(m_0)+U(m_0)=X$ if and only if 
$P(m_0)+Q(m_0)$ is surjective and the conclusion will follow from the continuity of the projections and the fact that the set of surjective maps is open in $\mathcal{L}(X)$. 
Note that it is obvious that if $P(m_0)+Q(m_0)$ is surjective then  $S(m_0)+U(m_0)=X$, thus it remains to show the other way. Suppose that $S(m_0)+U(m_0)=X$ and take $v\in X$ arbitrary. 
Then, there exists $v_s\in S(m_0),v_u\in U(m_0)$  such that
		$v=v_s+v_u$. Since $S(m_0)^{\perp}+U(m_0)^{\perp}=X$, because $S(m_0)\cap U(m_0)=\{0\}$ and $U(m_0)$ is finite dimensional, we can take $w_s\in S(m_0)^{\perp}$ and $w_u\in U(m_0)^{\perp}$ such that
		$$v_s-v_u=w_s+w_u.$$
		Define $\tilde{v}=v_s-w_s=v_u+w_u$. Then $(P+Q)\tilde{v}=v$.
	\end{proof}
    
This concludes  the existence of a neighborhood $V_i\ni x_i^*$ and  continuous subbundles $S_i',U'_i$ in $V_i$ satisfying \eqref{remarkmudancacoordenadas} and \eqref{mudancacoordenadas}. 

We now start the construction of the compatible subbundles. Fixing $i\in\{1,\dots,p\}$ and $d\in (0,1)$, consider the set 
\begin{equation}\label{definicaoD}
D=V^{-1}(i+d)\cap W^s(x^*_i)\cap\mathcal{A},
\end{equation}
and let $Q$ be a compact neighborhood of $D$ in $V^{-1}(i+d)\cap\mathcal{A}$ with 
$Q\cap W^u(x^*_i)=\emptyset.$
Assume that $\mathcal{V}$ is a continuous subbundle
defined in $Q$ such that
\begin{equation}\label{coordenadasnu}
\mathcal{S}(x)\oplus \mathcal{V}(x)=X, \hbox{ for } x\in D.
\end{equation}
Now, we  extend the subbundle $\mathcal{V}$  to the set 
$\gamma(Q)^+\cup W^u(x^*_i)$ as below:
\begin{equation*}
    \mathcal{V}(x):=\mathcal{U}(x),\ \forall x\in W^u(x_i^*),
\end{equation*}
	\begin{equation*}
    \mathcal{V}(\mathcal{T}(t)x):=(D_x\mathcal{T}(t))\mathcal{V}(x),\ \forall x\in Q,\ \forall t\ge 0
\end{equation*}
Note that each element of the extension of $\mathcal{V}$ is a closed subspace of $X$ (the derivative is an isomorphism onto its range). Therefore, we can analyze the continuity of the family $\mathcal{V}$. Our next goal is to show that we can reduce $Q$ in a way that the subbundle $\mathcal{V}$ becomes continuous in $\gamma^+(Q)\cup W^u(x^*_i)$.

Firstly, we will focus on results that guarantee the continuity in $\gamma^+(Q)$ (see Lemma \ref{lemaajudacoro}). Let us start with a technical lemma.

\begin{lemma}\label{decaimentoperturbado}
Let $X,Y$ be normed vector spaces and $T_n,T\in \mathcal{L}(X,Y)$ be injective bounded linear maps such that $T_n^{-1}:R(T_n)\to X$ and $T^{-1}:R(T)\to X$ are bounded and $T_n\to T$ in $\mathcal{L}(X,Y)$. Then
\begin{equation*}
   \sup\limits_{n\in\mathbb{N}} \parallel T_n^{-1}\parallel_{\mathcal{L}(R(T_n),X)}<+\infty.
\end{equation*}
\end{lemma}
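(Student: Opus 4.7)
The plan is to exploit the lower bound on $T$ given by the boundedness of $T^{-1}$, then perturb it using operator-norm convergence of $T_n \to T$. Specifically, setting $c := \|T^{-1}\|_{\mathcal{L}(R(T),X)}$, the injectivity and boundedness of $T^{-1}$ give the coercivity estimate
\begin{equation*}
\|Tx\| \ge \frac{1}{c}\|x\|, \qquad \forall\, x \in X.
\end{equation*}
This is the only place where the hypothesis on $T$ enters in a nontrivial way.

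Next, I would estimate $\|T_n x\|$ from below by a reverse triangle inequality:
\begin{equation*}
\|T_n x\| \ge \|Tx\| - \|(T_n - T)x\| \ge \left(\frac{1}{c} - \|T_n - T\|_{\mathcal{L}(X,Y)}\right)\|x\|.
\end{equation*}
Since $T_n \to T$ in $\mathcal{L}(X,Y)$, there exists $n_0 \in \mathbb{N}$ such that $\|T_n - T\|_{\mathcal{L}(X,Y)} < \tfrac{1}{2c}$ for all $n \ge n_0$. For these indices, the previous estimate yields $\|T_n x\| \ge \tfrac{1}{2c}\|x\|$ for every $x \in X$, which is equivalent to
\begin{equation*}
\|T_n^{-1}\|_{\mathcal{L}(R(T_n),X)} \le 2c, \qquad \forall\, n \ge n_0.
\end{equation*}

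For the finitely many remaining indices $n < n_0$, the hypothesis directly provides $\|T_n^{-1}\|_{\mathcal{L}(R(T_n),X)} < +\infty$. Therefore
\begin{equation*}
\sup_{n \in \mathbb{N}} \|T_n^{-1}\|_{\mathcal{L}(R(T_n),X)} \le \max\!\left\{ 2c,\ \|T_1^{-1}\|_{\mathcal{L}(R(T_1),X)},\ \ldots,\ \|T_{n_0-1}^{-1}\|_{\mathcal{L}(R(T_{n_0-1}),X)}\right\} < +\infty,
\end{equation*}
which finishes the proof. I do not anticipate any real obstacle: the argument is a standard perturbation trick and does not require completeness of $X$ or $Y$, nor any assumption on the ranges $R(T_n)$ beyond what is already in the hypotheses. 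The only mild subtlety is that the estimate $\|T_n x\| \ge (2c)^{-1}\|x\|$ automatically implies the norm bound on $T_n^{-1}$ on its domain $R(T_n)$, with no need to compare the different subspaces $R(T_n)$.
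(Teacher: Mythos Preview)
Your proof is correct and rests on the same key estimate as the paper's: the coercivity $\|Tx\|\ge c^{-1}\|x\|$ together with $\|T_n-T\|\to 0$ forces a uniform lower bound $\|T_n x\|\ge (2c)^{-1}\|x\|$ for all large $n$. The paper packages this as a contradiction argument (assuming $\|T_n^{-1}v_n\|\to\infty$ with $\|v_n\|=1$ and deriving $T_n(w_n/\|w_n\|)\to 0$, which violates the coercivity of $T$), whereas your direct version has the minor advantage of giving the explicit bound $2\|T^{-1}\|$ for the tail.
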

\begin{proof}
Assume that there exists a sequence $v_n\in R(T_n)$, with $\parallel v_n\parallel_Y=1$ for all $n$, such that
\begin{equation*}
    \parallel T_n^{-1}v_n\parallel_X\xrightarrow{n\to+\infty}+\infty.
\end{equation*}
Defining $w_n=T_n^{-1}v_n\in X$, we have
\begin{equation*}
    T_n\left(\dfrac{w_n}{\parallel w_n\parallel_X}\right)=\dfrac{v_n}{\parallel w_n\parallel_X}\xrightarrow{n\to+\infty} 0,
\end{equation*}
which contradicts the fact that $T$ is an isomorphism onto its image. 
\end{proof}

The following result will guarantee that if $X$ is a Hilbert space,  $\mathcal{V}=\{\mathcal{V}(x)\subset X:x\in M\}$ is a continuous subbundle in the metric space $M$ and $\{T(t):t\in [a,b]\}\subset\mathcal{L}(X)$ is a family of maps such that $T(t)$  is an isomorphism onto its range for all $t\in[a,b]$ and the map $[a,b]\ni t\mapsto T(t)$ is continuous in $\mathcal{L}(X)$, then $\{T(t)\mathcal{V}(x):x\in M,t\in [a,b]\}$ is a continuous subbundle.

\begin{lemma}\label{lemaajudacoro}
Let $X,Y$ be Hilbert spaces, $T_n,T\in \mathcal{L}(X,Y)$ be isomorphisms onto its images and $Q_n,Q\in\mathcal{L}(X)$ be continuous projections such that 
\begin{equation*}
    T_n\to T\ \text{in}\ \mathcal{L}(X,Y) \ \text{and}\ \ Q_n\to Q \ \ \text{in}\ \ \mathcal{L}(X).
\end{equation*}
Define $P_n$ the orthogonal projection onto $R(T_nQ_n)$ and $P$ the orthogonal projection onto $R(TQ)$. Then $P_n\to P$ in $\mathcal{L}(Y)$. In the particular case $Q_n=Q=I$ we have $R(T_n)\to R(T)$, in the sense of Definition \ref{projecao}.
\end{lemma}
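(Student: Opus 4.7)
The plan is to reduce the statement to Lemma \ref{equivdefproj} by exhibiting continuous (not necessarily orthogonal) projections $\tilde{P}_n, \tilde{P} \in \mathcal{L}(Y)$ with $R(\tilde{P}_n) = R(T_n Q_n)$, $R(\tilde{P}) = R(TQ)$ and $\tilde{P}_n \to \tilde{P}$ in $\mathcal{L}(Y)$. The key trick is to fix $T^*$ (not $T_n^*$) as the outer factor, so that the only operator I will need to invert is a small perturbation of the fixed invertible operator $T^*T$. Concretely, I would set
\begin{equation*}
\tilde{P}_n := T_n Q_n (T^* T_n)^{-1} T^*, \qquad \tilde{P} := T Q (T^* T)^{-1} T^*.
\end{equation*}

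First I would check that these are well-defined projections with the right ranges. Since $T$ is an isomorphism onto its image, $\langle T^*T x, x\rangle = \|Tx\|^2 \geq c^2 \|x\|^2$ for some $c > 0$, so $T^*T \in \mathcal{L}(X)$ is positive self-adjoint and invertible. From $T_n \to T$ in $\mathcal{L}(X,Y)$ we obtain $T^*T_n \to T^*T$ in $\mathcal{L}(X)$; by openness of the invertibles and continuity of inversion, $T^*T_n$ is invertible for all large $n$ and $(T^*T_n)^{-1} \to (T^*T)^{-1}$ in $\mathcal{L}(X)$. A direct computation using $Q_n^2 = Q_n$ and $(T^*T_n)^{-1}(T^*T_n) = Id$ gives $\tilde{P}_n^2 = \tilde{P}_n$. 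Clearly $R(\tilde{P}_n) \subset R(T_n Q_n)$; conversely, if $y = T_n Q_n x$, then $\tilde{P}_n y = T_n Q_n (T^*T_n)^{-1}(T^*T_n) Q_n x = T_n Q_n^2 x = y$, so $R(\tilde{P}_n) = R(T_n Q_n)$. The same arguments apply to $\tilde{P}$.

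Next, I would establish the norm convergence $\tilde{P}_n \to \tilde{P}$ via the decomposition
\begin{align*}
\tilde{P}_n - \tilde{P} &= (T_n - T)\,Q_n (T^*T_n)^{-1} T^* + T(Q_n - Q)(T^*T_n)^{-1} T^* \\
&\quad + TQ\bigl[(T^*T_n)^{-1} - (T^*T)^{-1}\bigr] T^*,
\end{align*}
in which each of the three summands tends to zero in $\mathcal{L}(Y)$ because one factor does while the remaining ones stay uniformly bounded. Applying Lemma \ref{equivdefproj} to the continuous projections $\tilde{P}_n \to \tilde{P}$ then yields $P_n \to P$ in $\mathcal{L}(Y)$. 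The particular case $Q_n = Q = Id$ specializes to $\tilde{P}_n = T_n(T^*T_n)^{-1}T^*$, a continuous family of projections onto $R(T_n)$ converging to the orthogonal projection onto $R(T)$, hence $R(T_n) \to R(T)$ in the sense of Definition \ref{projecao}.

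The main obstacle is producing candidate projections whose inverses can be controlled uniformly and shown to converge: the naive choice $T_n Q_n T_n^{-1} P_n$ (with $P_n$ the target orthogonal projection) is circular, while $T_n Q_n (T_n^*T_n)^{-1} T_n^*$ would require an independent argument via Lemma \ref{decaimentoperturbado} to guarantee uniform boundedness of $(T_n^*T_n)^{-1}$ and its convergence to $(T^*T)^{-1}$. Replacing the left $T_n^*$ by the fixed $T^*$ collapses both issues into the standard openness and continuity of inversion applied to a small perturbation of the fixed invertible operator $T^*T$.
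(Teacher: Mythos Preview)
Your proof is correct and takes a genuinely different route from the paper. The paper argues directly on the orthogonal projections: it shows $(I-P)P_n\to 0$ (and symmetrically $P(I-P_n)\to 0$ via Lemma~\ref{gj}) by writing $P_nv=T_nQ_nv_n$ with $\|v_n\|\le\|T_n^{-1}\|$, bounding $\|(I-P)P_nv\|=d_Y(P_nv,R(TQ))\le\|T_nQ_nv_n-TQv_n\|$, and invoking Lemma~\ref{decaimentoperturbado} to get a uniform bound on $\|T_n^{-1}\|$. Your approach instead produces explicit oblique projections $\tilde P_n=T_nQ_n(T^*T_n)^{-1}T^*$ with the correct ranges, proves $\tilde P_n\to\tilde P$ by pure operator algebra, and then appeals to Lemma~\ref{equivdefproj}. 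The trade-offs: the paper's argument is shorter and uses only the metric characterization of orthogonal projections plus Lemma~\ref{decaimentoperturbado}; your argument is more constructive, gives an explicit formula for a projection family, and sidesteps Lemma~\ref{decaimentoperturbado} entirely by fixing $T^*$ so that invertibility and convergence of $(T^*T_n)^{-1}$ follow from a small perturbation of the single invertible operator $T^*T$. Both are clean; yours has the minor caveat that $\tilde P_n$ is only defined for large $n$, which is of course harmless for a convergence statement.
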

\begin{proof}
It is sufficient to show that $(I-P)P_n\to 0$ and $P(P_n-I)\to 0$ in $\mathcal{L}(Y)$. We will only prove that $(I-P)P_n\to 0$ in $\mathcal{L}(Y)$ and the other case follows analogously by Lemma \ref{gj}. Given $v\in X$, $\parallel v\parallel_Y\le 1$, we have
\begin{equation*}
    \parallel P_nv-PP_nv\parallel_Y=d_Y(P_nv,R(P))=d_Y(P_nv,R(TQ)).
\end{equation*}
Since $P_nv=T_nQ_nv_n$ for some $v_n\in X$ with $\parallel v_n\parallel_X\le \parallel T_n^{-1}\parallel_{\mathcal{L}(R(T_n),X)}$ (we can consider $v_n=Q_nv_n$), we have
\begin{equation*}
    \parallel P_nv-PP_nv\parallel_Y\le \parallel T_nQ_nv_n-TQv_n\parallel_Y.
\end{equation*}
From Lemma \ref{decaimentoperturbado} there exists $M>0$ that bounds $\parallel T_n^{-1}\parallel_{\mathcal{L}(R(T_n),X)}$ for all $n$ and therefore
\begin{equation*}
    \sup\limits_{\parallel v\parallel=1}\parallel P_nv-PP_nv\parallel_Y\le M\parallel T_nQ_n-TQ\parallel_{\mathcal{L}(X,Y)}\to 0.
\end{equation*}
\end{proof}

With this result we are ready to prove the continuity of the subbundle $\mathcal{V}$  in $\gamma^+(Q)$ (Lemma \ref{lemma-2.2.7}). Despite that, we now proceed with a technical lemma that will help us to prove of the continuity of the subbundle $\mathcal{V}$ in $\gamma(Q)^+\cup W^u(x^*_i)$. This lemma will connect the notion of inclination, introduced in \cite{palis1969morse}, and continuity of subbundles (Definition \ref{projecao}).

\begin{lemma}\label{mudandolemma227}
     Let $(M,d)$ be a metric space, $X$ be a Hilbert space and  $\{S'(x)\subset X:x\in M\}$, $\{U'(x)\subset X:x\in M\}$ and $\{\mathcal{V}(x)\subset X:x\in M\}$ be continuous subbundles such that 
    \begin{align*}
        &S'(x)\oplus U'(x)=X,\ \forall x\in M,\\
        &S'(x)\cap \mathcal{V}(x)=0,\ \forall x\in M.
    \end{align*}
Given $v\in\mathcal{V}(x)$ non nulle, we define the inclination of $v$ in relation to the decomposition $S'(x)\oplus U'(x)=X$ as $\alpha(v):=\dfrac{|v^s|}{|v^u|}$. Fix $x_0\in M$ and a sequence $x_n\in M$ such that $x_n\to x_0$ as $n\to+\infty$. If for any sequence $v_n\in\mathcal{V}(x_n)$ (non null) we have $\alpha(v_n)\to 0$ as $n\to+\infty$, then $\mathcal{V}(x_n)$ converges to $U'(x_0)$ as $n\to+\infty$, in the sense of Definition \ref{projecao}.

\begin{proof}
Let $P_{U'(x)}$ and $P_{\mathcal{V}(x)}$ be the orthogonal projections onto $U'(x)$ and $\mathcal{V}(x)$ respectively, and $P_{U'(x)^{\perp}}=I-P_{U'(x)}$, $P_{\mathcal{V}(x)^{\perp}}=I-P_{\mathcal{V}(x)}.$
 
 If $\mathcal{V}(x_n)$ does not converges to $U'(x_0)$, then  we have two possible cases: 
 \begin{equation*}
     P_{U'(x_0)^{\perp}}P_{\mathcal{V}(x_n)}\nrightarrow 0\ \text{in}\ \mathcal{L}(X)\ \ \text{or}\ \ P_{U'(x_0)}P_{\mathcal{V}(x_n)^{\perp}}\nrightarrow 0\ \text{in}\ \in\mathcal{L}(X)
 \end{equation*}
 We will prove by contradiction that the first case does not hold case  and the second case will follow analogously. Assume that there exists $\epsilon\in (0,1)$ and a  subsequence of $\{x_n\}_{n\in\mathbb{N}}$,  still denoted by $\{x_n\}_{n\in\mathbb{N}}$, such that $x_n\to x_0$ as $n\to+\infty$ and  
	\begin{equation}\label{contradic}
	    \parallel P_{U'(x_0)^{\perp}}P_{\mathcal{V}(x_n)}w_n\parallel\ge\epsilon,\ \forall n\in\mathbb{N},
	\end{equation}
	 for some sequence $\{w_n\}_{n\in\mathbb{N}}\subset X$, with $\parallel w_n\parallel\le 1$. 
     
	Define $v_n=P_{\mathcal{V}(x_n)}w_n$. Take $\epsilon'>0$ such that
	\begin{equation*}
	    \dfrac{\epsilon'}{1-\epsilon'}<\epsilon
	\end{equation*}
and fix	large $N\in\mathbb{N}$ such that
	\begin{equation}\label{fff}
	     \dfrac{|v_n^s|}{|v_n^u|}<\epsilon',\ \forall n>N,
	\end{equation}
that is
\begin{equation*}
    -\epsilon'|v_n^u|<-|v^s_n|,\ \forall n>N.
\end{equation*}

Hence
\begin{equation*}
    (1-\epsilon')\parallel v_n^u\parallel\le\parallel v_n^u \parallel-\parallel v^s_n\parallel \le \parallel v_n^s+v_n^u\parallel=\parallel v_n\parallel,\ \forall n>N,
\end{equation*}
which implies
\begin{equation}\label{riguii}
    \parallel v^u_n\parallel\le \dfrac{\parallel v_n\parallel}{1-\epsilon'},\ \forall n>N.
\end{equation}

Putting \eqref{fff} together with \eqref{riguii} we obtain
\begin{equation*}
    \parallel v_n-v^u_n\parallel=\parallel v_n^s\parallel\le \epsilon'\parallel v_n^u\parallel\le\dfrac{\epsilon'}{1-\epsilon'}\parallel v_n\parallel\le \dfrac{\epsilon'}{1-\epsilon'}<\epsilon,\ \forall n>N.
\end{equation*}

Since $P_{U'(x_0)}$ is the orthogonal projection onto $U'(x_0)$, then 
\begin{equation*}
 \parallel P_{U(x_0)^{\perp}}P_{\mathcal{V}(x_n)}w_n \parallel =\parallel(I-P_{U(x_0)})v_n \parallel=\parallel v_n- P_{U(x_0)}v_n\parallel\le \parallel v_n-v^u_n\parallel
<\epsilon,\ \forall n>N,
\end{equation*}
which contradicts \eqref{contradic} and finishes the proof.
\end{proof}
\end{lemma}

Now we finally prove the continuity of the subbundle $\mathcal{V}$ in $\gamma(Q)^+\cup W^u(x^*_i)$.

\begin{lemma}\label{lemma-2.2.7}
	Let $V_i$ be a neighborhood of $x^*_i$ satisfying properties \eqref{remarkmudancacoordenadas} and \eqref{mudancacoordenadas}. Fix $d'>0$  such that the properties stated in Lemma \ref{lemma2.2.6} holds for the neighborhood $V_i$. Take $d\in (0,d')$ and define the set $D$ as in \eqref{definicaoD}. Assume that $Q$ is a neighborhood (in $V^{-1}(i+d)\cap\mathcal{A}$) of $D$ and $\mathcal{V}$ is a continuous  subbundle on $Q$ satisfying \eqref{coordenadasnu}. Then, we can reduce $Q$ such that $\mathcal{V}$ is continuous in 
	$(\gamma(Q)^+\cup W^u(x^*_i))\cap V_i$.
\end{lemma}
\begin{proof}
   First we prove the continuity of $\mathcal{V}$ in $\gamma^+(Q)$.  We just have to show that the subbundles $\mathcal{V}_{t_0}:=\{(D_{x}\mathcal{T}(t_0))\mathcal{V}(x)\}_{x\in Q}$ and $\mathcal{V}_{x_0}:=\{\{(D_{x_0}\mathcal{T}(t))\mathcal{V}(x_0)\}_{t\ge0}$ are continuous for each $t_0>0$ and $x_0\in Q$. We start by proving the continuity of $\mathcal{V}_{t_0}$.

Fix $t_0>0$ and assume that $x,x_n\in Q$  and $\mathcal{T}(t_0)x_n\to \mathcal{T}(t_0)x$ as $n\to+\infty$.  Then $ x_n\to x$ ($\mathcal{T}|_{\mathcal{A}}$ is a group) and $\mathcal{V}(x_n)\to\mathcal{V}(x)$, in the sense of Definition \ref{projecaoz}. Since $\mathcal{T}(t_0)\in\mathcal{C}^1(X)$, we can use  Lemma \ref{lemaajudacoro} with $T_n=D_{x_n}\mathcal{T}(t_0)$, $T=D_{x}\mathcal{T}(t_0)$ and $Q_n=P_{\mathcal{V}(x_n)},Q=P_{\mathcal{V}(x)}$ to guarantee that
\begin{equation*}
    \mathcal{V}(\mathcal{T}(t_0)x_n)=(D_{x_n}\mathcal{T}(t_0)) \mathcal{V}(x_n)\to (D_{x}\mathcal{T}(t_0))\mathcal{V}(x)=\mathcal{V}(\mathcal{T}(t_0)x),
\end{equation*}
where the convergence is in the sense of projections (Definitions \ref{projecaoz} and \ref{projecao}). Hence the subbundle $\mathcal{V}_{t_0}$ is continuous for each $t_0\ge0$. 

Now we show the continuity of $\mathcal{V}_{x_0}$, for $x_0\in D$ fixed. From hypothesis (H2) (from Theorem \ref{teointroducao}) it follows that for each $t\ge 0$ it holds
\begin{equation}\label{solouchuva}
  (0,+\infty)\ni t_n\to t\Longrightarrow (D_{x_0}\mathcal{T}(t_n))v\to  (D_{x_0}\mathcal{T}(t))v,\ \text{for all}\ v\in X.
\end{equation}

 Since $\mathcal{V}(x_0)$ is finite dimensional (because the unstable manifolds are finite dimensional), it follows from \eqref{solouchuva} that
\begin{equation*}
  t_n\to t \Longrightarrow (D_{x_0}\mathcal{T}(t_n))|_{\mathcal{V}(x_0)}\to  (D_{x_0}\mathcal{T}(t))|_{\mathcal{V}(x_0)}\ \text{in}\ \mathcal{L}(\mathcal{V}(x_0),X).
\end{equation*}

Therefore we can use Lemma \ref{lemaajudacoro} once more to guarantee the continuity of the subbundle $\mathcal{V}_{x_0}$, for each $x_0\in Q$. This finishes the proof of the continuity of $\mathcal{V}$ in $\gamma^+(Q)$.

Now we focus on the continuity of $\mathcal{V}$ in $\gamma^+(Q)\cup W^u(x^*_i)$. In \cite[Lemma 2.2.7]{Pilyugin} the author proves, using the ideas introduced by Palis  \cite{palis1969morse}, that we can reduce $Q$ such that the inclination of the vectors $v(x_n)\in \mathcal{V}(x_n)$ related to the decomposition $S_i'(x_n)\oplus U_i'(x_n)=X$ converges to $0$ as $x_n\in\gamma ^+(Q) $ approaches the unstable manifold $W^u(x_i^*)$. In other words, if $\gamma^+(Q)\ni x_n\to x\in W^u(x_i^*)$ and $v_n\in \mathcal{V}(x_n)$, then $\alpha(v_n)\to 0$ as $n\to+\infty$. Hence, it follows from Lemma \ref{mudandolemma227} that we can reduce $Q$ such that $\mathcal{V}$ is continuous, in the sense of Definitions \ref{projecaoz} and \ref{projecao}, in  $\gamma^+(Q)\cup W^u(x^*_i)$.    
	\end{proof}

Now that we have proved the continuity of $\mathcal{V}$ in $\gamma^+(Q)\cup W^u(x_i^*)$, we want to find conditions for the extension of 
$\mathcal{V}$  in $\gamma^-(Q)$
\begin{equation*}
    \mathcal{V}(\mathcal{T}(-t)x):=(D_x\mathcal{T}(-t))\mathcal{V}(x),\ \forall\ x\in Q,\ \forall t\ge0
\end{equation*}
to be continuous in $\gamma^-(Q)$. This continuity does not follow from Lemma \ref{lemaajudacoro}, since the Fréchet derivative $D_x\mathcal{T}(t)$ is not an isomorphism for $x\in\mathcal{A}$. Obtaining this ``backwards continuity'' is be essential to construct one of the compatible subbundles in Theorem \ref{lemma2.2.9}, as we will see further on.

The following result  is the first step to obtain the backwards continuity of $\mathcal{V}$.
\begin{lemma}\label{contiparatras}
    Let $X$ be a Hilbert space, $Q_n,Q\in\mathcal{L}(X)$ be projections and $T_n,T\in\mathcal{L}(X)$ be isomorphisms onto its images such that $Q_n\to Q$  in $\mathcal{L}(X)$ and $T_n\to T$  in $\mathcal{L}(X)$. If $R(Q_n)\subset R(T_n)$ for all $n$, and $R(Q)\subset R(T)$, then $T_n^{-1}Q_n\to T^{-1}Q$ in $\mathcal{L}(X)$. Moreover, if $P_n$ and $P$ are the orthogonal projections onto $R(T_n^{-1}Q_n)$ and $R(T^{-1}Q)$ respectively, then  $P_n\to P$ in $\mathcal{L}(X)$.
\end{lemma}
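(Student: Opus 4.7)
My plan is to first establish the operator convergence $T_n^{-1}Q_n \to T^{-1}Q$ in $\mathcal{L}(X)$ by a direct estimate, and then deduce the projection convergence by adapting the strategy used in Lemma \ref{lemaajudacoro}. For the first step, I would fix $x\in X$ with $\|x\|\le 1$ and set $y_n = T_n^{-1}Q_n x$ and $y = T^{-1}Q x$, which are well defined because $R(Q_n)\subset R(T_n)$ and $R(Q)\subset R(T)$. From $T_n y_n = Q_n x$ and $Ty = Qx$ the crucial identity is
$$T(y_n - y) = Ty_n - Qx = (T - T_n)y_n + (Q_n - Q)x.$$
Since $T^{-1}$ is bounded on $R(T)$ and $T(y_n-y)\in R(T)$, this gives
$$\|y_n - y\| \le \|T^{-1}\|_{\mathcal{L}(R(T),X)}\bigl(\|T-T_n\|\,\|y_n\| + \|Q_n - Q\|\bigr).$$
Lemma \ref{decaimentoperturbado} yields $\sup_n \|T_n^{-1}\|_{\mathcal{L}(R(T_n),X)} < +\infty$, so $\|y_n\|\le \|T_n^{-1}\|\,\|Q_n\|$ is uniformly bounded. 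Taking the supremum over unit $x$ gives $T_n^{-1}Q_n\to T^{-1}Q$ in $\mathcal{L}(X)$.

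Write $S_n := T_n^{-1}Q_n$ and $S := T^{-1}Q$, so that $R(P_n) = R(S_n) = T_n^{-1}(R(Q_n))$ and $R(P) = R(S) = T^{-1}(R(Q))$; both are closed because $T_n^{-1}$ and $T^{-1}$ are linear homeomorphisms on their (closed) domains. To show $(I-P)P_n \to 0$ in $\mathcal{L}(X)$, fix $v$ with $\|v\|\le 1$ and set $y_n := T_n(P_n v)$. Since $P_n v\in T_n^{-1}(R(Q_n))$, we have $y_n\in R(Q_n)$ and $\|y_n\|\le \|T_n\|$; as $Q_n y_n = y_n$, it follows that $S_n y_n = T_n^{-1}y_n = P_n v$. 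Comparing $P_n v$ with $S y_n\in R(S)=R(P)$ yields
$$\|(I-P)P_n v\| = d(P_n v,R(S)) \le \|P_n v - S y_n\| = \|(S_n - S)y_n\| \le \|T_n\|\,\|S_n - S\|,$$
which is uniform in $v$ and tends to $0$ by the first step (together with the boundedness of $\{\|T_n\|\}$ from $T_n\to T$).

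The reverse bound $P(I-P_n)\to 0$ will follow by the adjoint symmetry of orthogonal projections: since $P^*=P$ and $P_n^*=P_n$, one has $(P(I-P_n))^* = (I-P_n)P$, so $\|P(I-P_n)\| = \|(I-P_n)P\|$, and the latter is controlled by the mirrored construction. Setting $y := T(Pv)\in R(Q)$ for $\|v\|\le 1$ gives $\|y\|\le \|T\|$ and $Sy = T^{-1}y = Pv$, while $Q_n y\in R(Q_n)\subset R(T_n)$ makes $S_n y$ well defined, whence
$$\|(I-P_n)Pv\| = d(Pv,R(S_n)) \le \|Pv - S_n y\| = \|(S-S_n)y\| \le \|T\|\,\|S_n - S\|.$$
Combining both estimates in the decomposition $P - P_n = P(I-P_n) - (I-P)P_n$ concludes that $P_n\to P$ in $\mathcal{L}(X)$. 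I do not expect a genuine obstacle: the only subtlety is that $T_n^{-1}$ is defined only on $R(T_n)$, but the hypothesis $R(Q_n)\subset R(T_n)$ makes all the compositions legal, and Lemma \ref{decaimentoperturbado} provides the uniform control of $\|T_n^{-1}\|_{\mathcal{L}(R(T_n),X)}$ needed to push the estimates through.
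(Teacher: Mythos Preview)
Your proof is correct and follows essentially the same approach as the paper. The only cosmetic difference is in the first step: you factor the difference through $T^{-1}$ via $T(y_n-y)=(T-T_n)y_n+(Q_n-Q)x$, whereas the paper factors through $T_n^{-1}$ via $T^{-1}Q-T_n^{-1}Q_n=T_n^{-1}(T_nT^{-1}Q-Q_n)$; both rely on Lemma~\ref{decaimentoperturbado} for the uniform bound on $\|T_n^{-1}\|$, and your treatment of the projection convergence (including the adjoint symmetry $\|P(I-P_n)\|=\|(I-P_n)P\|$, which is precisely Lemma~\ref{gj}) matches the paper's argument.
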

\begin{proof}
    We know from Lemma \ref{decaimentoperturbado} that $\sup\limits_{n\in\mathbb{N}}\parallel T_n^{-1}\parallel_{\mathcal{L}(R(T_n),X)}$ is bounded. Hence
    \begin{equation*}
        \parallel T^{-1}Q-T_n^{-1}Q_n\parallel_{\mathcal{L}(X)}\le \parallel T_n^{-1}\parallel_{\mathcal{L}(R(T_n),X)}\parallel T_nT^{-1}Q-Q_n\parallel_{\mathcal{L}(X)}\to 0.
    \end{equation*}
    In order to prove that $P_n\to P$ in $\mathcal{L}(X)$, we will show that $(I-P)P_n\to 0$ and $P(P_n-I)\to 0$ in $\mathcal{L}(X)$. We will prove only that $(I-P)P_n\to 0$ and the other case will follow analogously by Lemma \ref{gj}. Given $v\in X$ with $\parallel v\parallel=1$ we have
    \begin{equation*}
        \parallel PP_nv-P_nv\parallel= d(P_nv,R(P))=d(P_nv,R(T^{-1}Q)).
    \end{equation*}
    Since $P_nv=T_n^{-1}Q_nv_n$, with $\parallel v_n\parallel=\parallel Q_nv_n\parallel\le M:=\sup\limits_{n\in\mathbb{N}}\parallel T_n\parallel<\infty$ for all $n$, we have
    \begin{align*}
         \parallel PP_nv-P_nv\parallel&\le \parallel T_n^{-1}Q_nv_n-T^{-1}Qv_n\parallel\le M\parallel T_n^{-1}Q_n- T^{-1}Q\parallel_{\mathcal{L}(X)}\\
         &\Rightarrow\sup\limits_{\parallel v\parallel=1}\parallel PP_nv-P_nv\parallel\le M\parallel T_n^{-1}Q_n- T^{-1}Q\parallel_{\mathcal{L}(X)}\to 0.
    \end{align*}
\end{proof}

\begin{corollary}\label{normainversaconv}
     Let $X$ be a Hilbert space and $T_n,T\in\mathcal{L}(X)$ be isomorphisms onto its ranges such that $T_n\to T$ in $\mathcal{L}(X)$. Then $$\parallel T_n^{-1}\parallel_{\mathcal{L}(R(T_n),X)}\to \parallel T^{-1}\parallel_{\mathcal{L}(R(T),X)}.$$
\end{corollary}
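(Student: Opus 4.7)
The plan is to bypass the difficulty that $T_n^{-1}$ and $T^{-1}$ live on different domains $R(T_n)$ and $R(T)$ by computing the operator norms back on $X$ via the standard identity
\begin{equation*}
\|T^{-1}\|_{\mathcal{L}(R(T),X)} \;=\; \sup_{y\in R(T)\setminus\{0\}} \frac{\|T^{-1}y\|}{\|y\|} \;=\; \sup_{x\in X\setminus\{0\}}\frac{\|x\|}{\|Tx\|} \;=\; \Bigl(\inf_{\|x\|=1}\|Tx\|\Bigr)^{-1},
\end{equation*}
and analogously for each $T_n$. Set $m := \inf_{\|x\|=1}\|Tx\|$ and $m_n := \inf_{\|x\|=1}\|T_n x\|$.

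Next I would exploit the reverse triangle inequality: for every unit vector $x\in X$,
\begin{equation*}
\bigl|\,\|T_n x\| - \|Tx\|\,\bigr| \;\le\; \|T_n x - Tx\| \;\le\; \|T_n - T\|_{\mathcal{L}(X)}.
\end{equation*}
Taking infima over the unit sphere on both sides yields $|m_n - m| \le \|T_n - T\|_{\mathcal{L}(X)}$, so $m_n \to m$ as $n \to \infty$. Since $T$ is an isomorphism onto its range, the open mapping theorem (applied to $T\colon X \to R(T)$, noting that $R(T)$ is closed because $T^{-1}$ is bounded there) guarantees $m = \|T^{-1}\|_{\mathcal{L}(R(T),X)}^{-1} > 0$. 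Equivalently, Lemma \ref{decaimentoperturbado} provides the uniform bound $\sup_n \|T_n^{-1}\|_{\mathcal{L}(R(T_n),X)} < +\infty$, which translates to $\inf_n m_n > 0$, so the limit $m$ is strictly positive as well.

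Finally, since $m_n \to m$ and $m > 0$, the function $t \mapsto 1/t$ is continuous at $m$, so
\begin{equation*}
\|T_n^{-1}\|_{\mathcal{L}(R(T_n),X)} \;=\; \frac{1}{m_n} \;\longrightarrow\; \frac{1}{m} \;=\; \|T^{-1}\|_{\mathcal{L}(R(T),X)},
\end{equation*}
which is the desired conclusion. The only nontrivial point is ensuring $m > 0$ (so that we can actually invert); this is precisely where the hypothesis that $T$ is an isomorphism onto its range is used, and it is the same quantitative ingredient already encoded in Lemma \ref{decaimentoperturbado}. No appeal to the full strength of Lemma \ref{contiparatras} is needed, since that lemma addresses convergence of the inverse operators (which makes sense only after composing with projections onto the ranges), whereas here only convergence of scalar norms is required.
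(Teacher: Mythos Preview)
Your argument is correct, and it is genuinely different from the paper's. The paper proves the corollary by invoking Lemma~\ref{contiparatras}: it takes $Q_n,Q$ to be the orthogonal projections onto $R(T_n),R(T)$ (which converge by Lemma~\ref{lemaajudacoro}), obtains $T_n^{-1}Q_n\to T^{-1}Q$ in $\mathcal{L}(X)$, and then reads off the norm convergence from $\|T_n^{-1}Q_n\|_{\mathcal{L}(X)}=\|T_n^{-1}\|_{\mathcal{L}(R(T_n),X)}$. Your route is more elementary: by rewriting $\|T^{-1}\|_{\mathcal{L}(R(T),X)}=\bigl(\inf_{\|x\|=1}\|Tx\|\bigr)^{-1}$ you reduce the question to continuity of the lower bound $m(T)=\inf_{\|x\|=1}\|Tx\|$, which follows immediately from the uniform estimate $\bigl|\,\|T_nx\|-\|Tx\|\,\bigr|\le\|T_n-T\|$ on the unit sphere. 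Your proof uses neither orthogonal projections nor the Hilbert structure, so it actually works verbatim in any Banach space; the paper's approach, by contrast, yields the stronger operator convergence $T_n^{-1}Q_n\to T^{-1}Q$, which is the natural byproduct of the machinery already developed there but is not needed for this corollary. One cosmetic remark: the phrase ``taking infima on both sides'' should be unpacked as $m-\|T_n-T\|\le m_n\le m+\|T_n-T\|$ rather than taking the infimum of the absolute value directly, but this is exactly what your bound says.
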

\begin{proof}
    Just apply Lemma \ref{contiparatras} with $Q_n$ and $Q$ being the orthogonal projections onto $R(T_n)$ and $R(T)$ respectively (they are continuous from Lemma \ref{lemaajudacoro}) and note that 
    \begin{equation*}
        \parallel T_n^{-1}P_n\parallel_{\mathcal{L}(X)}=\parallel T_n^{-1}\parallel_{\mathcal{L}(R(T_n),X)}\ \ \text{and}\ \ \parallel T^{-1}P\parallel_{\mathcal{L}(X)}=\parallel T^{-1}\parallel_{\mathcal{L}(R(T),X)}
    \end{equation*}
\end{proof}

 Note that if we  have a continuous subbundle $\{G(x)\}_{x\in M}$ in a metric space $M$ and we want the continuity of the subbundle $\{T^{-1}G(x)\}_{x\in M}$, where $T$ is an isomorphism onto its image, then we must have $G(x)\subset R(T)$ in order to apply  Lemma \ref{contiparatras}. Since
\begin{equation*}
    T^{-1}G(x)=T^{-1}(G(x)\cap R(T)),\ \forall x\in M
\end{equation*}
and $(G(x)\cap R(T))\subset R(T)$, then we just have to guarantee  the continuity of the subbundle $\{G(x)\cap R(T)\}_{x\in M}$ to apply Lemma \ref{contiparatras}.
Unfortunately, the continuity of two  subbundles  $\{E(x)\}_{x\in M}$ and $\{F(x)\}_{x\in M}$  does not implies that  $\{E(x)\cap F(x)\}_{x\in M}$ is a continuous subbundle. For example, if we consider the Hilbert space $X=\mathbb{R}^2$, the metric space $M=[0,+\infty)$ and fix the continuous subbundles $\{F_m=\langle (1,1)\rangle\}_{m\ge0}$ and $\{E_m=\langle (1,m)\rangle\}_{m\ge0}$, where $\langle v\rangle$ denotes the subspace generated by $v\in \mathbb{R}^2$, then $F_m\cap E_m=\{0\}$ if $m\neq 1$ and $F_1=\langle (1,1)\rangle$. Therefore, the subbundle $\{F_m\cap E_m\}_{m\ge0}$ is not continuous at $m=1$ (see Remark \ref{permanenciadim}).

\begin{figure}[H]
    \centering
    \begin{tikzpicture}
        \draw[line width=0.2mm,->](0,-2)--(0,3);
        \draw[line width=0.2mm,->](-2,0)--(3,0);
        \draw[line width=0.4mm,blue,-](-1.54,-1.54)--(2.5,2.5);
        \node at (4,2.5)[above] { $F_m=<(1,1)>$};
         \draw[line width=0.2mm,red,-](-1.8,-0.9)--(3.2,1.6);
         \node at (3.2,1.6)[right] { $E_m$};
         \draw[line width=0.2mm,red,-](-1.7,-1.19)--(2.9,2.03);
         \draw[line width=0.2mm,red,-](-0.5,-0.4)--(2.8,2.24);
\draw[line width=0.2mm,red,-](-1.6,-1.44)--(2.6,2.34);  
\draw[line width=0.2mm,red,-](-1.35,-1.755)--(2.1,2.73);   
\draw[line width=0.2mm,red,-](-1.2,-1.8)--(1.9,2.85);   
\draw[line width=0.2mm,red,-](-1.5,-1.65)--(2.35,2.585);   
\draw[line width=0.2mm,red,-](-0.95,-1.9)--(1.5,3);   
\draw[line width=0.2mm,red,-](-0.65,-1.95)--(1.05,3.15);   
\end{tikzpicture}
    \caption{Discontinuity of the intersection of subbundles}
    \label{quemsabe}
\end{figure}
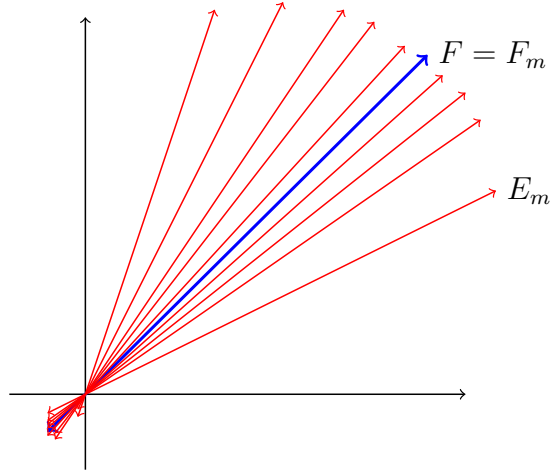

Therefore we need to find conditions that guarantees the continuity of the intersection of two continuous subbundles.

\begin{proposition}\label{propinterseccont}
    Let $(M,d)$ be a  metric space, $X$ be a Hilbert space and $\{E_m\subset X\}_{m\in M}$, $\{F_m\subset X\}_{m\in M}$ be  continuous subbundles. If 
    \begin{equation*}
        E_m+F_m=X\ \ \text{and}\ \ \dim (E_m)^{\perp}<+\infty,\ \text{for all}\ m\in M,
    \end{equation*}
     then the family $\{E_m\cap F_m\}_{m\in M}$ is continuous.
\end{proposition}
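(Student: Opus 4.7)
The plan is to construct, locally near each $m_0\in M$, a norm-continuous family of orthogonal projections onto $E_m\cap F_m$; since continuity of a subbundle is a local property, this suffices. The first move is to dualize: because $E_m^\perp$ is finite dimensional and $F_m^\perp$ is closed, the sum $E_m^\perp+F_m^\perp$ is closed, and from $E_m+F_m=X$ we get $E_m^\perp\cap F_m^\perp=\{0\}$, so
\[
(E_m\cap F_m)^\perp \;=\; E_m^\perp \oplus F_m^\perp.
\]
Hence it is enough to show that the projection $R_m$ onto $E_m^\perp+F_m^\perp$ is continuous in $m$, since then $P_{E_m\cap F_m}=I-R_m$.

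The key trick is to orthogonalize this (non-orthogonal) direct sum. Let $V_m:=P_{F_m}(E_m^\perp)\subset F_m$, where $P_{F_m}$ is the continuous orthogonal projection onto $F_m$. Writing $u=P_{F_m}u+P_{F_m^\perp}u$ for $u\in E_m^\perp$ gives
\[
E_m^\perp+F_m^\perp \;=\; V_m\oplus^\perp F_m^\perp,
\]
and this decomposition is now genuinely orthogonal because $V_m\subset F_m$. Consequently $R_m = P_{V_m} + P_{F_m^\perp}$, and the second summand is already continuous in $m$; it remains to show continuity of $P_{V_m}$.

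By Remark \ref{permanenciadim} applied to the continuous subbundle $\{E_m^\perp\}$ (which is continuous since $P_{E_m^\perp}=I-P_{E_m}$), the dimension $n:=\dim E_{m_0}^\perp$ is constant on a neighborhood $W$ of $m_0$. Fix an orthonormal basis $e_1,\dots,e_n$ of $E_{m_0}^\perp$ and define $u_i(m):=P_{E_m^\perp}e_i$; these are continuous with $u_i(m_0)=e_i$, so after shrinking $W$ their Gram matrix is close to the identity and $\{u_i(m)\}$ is a continuously varying basis of $E_m^\perp$. Setting $v_i(m):=P_{F_m}u_i(m)$, the vectors $v_i(m)$ are continuous and linearly independent on $W$: if $\sum a_i v_i(m)=0$, then $u:=\sum a_i u_i(m)\in E_m^\perp$ satisfies $P_{F_m}u=0$, i.e.\ $u\in E_m^\perp\cap F_m^\perp=\{0\}$, forcing all $a_i=0$. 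A finite Gram--Schmidt procedure produces an orthonormal basis $\tilde v_1(m),\dots,\tilde v_n(m)$ of $V_m$ depending continuously on $m$ (the denominators stay away from zero after further shrinking $W$), and then
\[
P_{V_m}x \;=\; \sum_{i=1}^n \langle x,\tilde v_i(m)\rangle\,\tilde v_i(m)
\]
is manifestly norm-continuous in $m$. Combining the two pieces gives $R_m$, and therefore $P_{E_m\cap F_m}$, continuous in $m$.

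The main obstacle is the central step of replacing the non-orthogonal direct sum $E_m^\perp\oplus F_m^\perp$ by an orthogonal one, because only then can the associated projection be written via an inner-product formula and shown continuous by soft arguments. The device $V_m=P_{F_m}(E_m^\perp)$ resolves this cleanly, and the finite dimensionality of $E_m^\perp$ is used twice: once to guarantee that $E_m^\perp+F_m^\perp$ is closed, and once to allow a continuous choice of basis on $W$ via the projection of a fixed basis at $m_0$ together with finite Gram--Schmidt.
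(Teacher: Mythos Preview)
Your proof is correct and takes a genuinely different route from the paper after the common first step. Both arguments begin with the same dualization: from $E_m+F_m=X$ and $\dim E_m^\perp<\infty$ one gets $(E_m\cap F_m)^\perp=E_m^\perp\oplus F_m^\perp$ closed, so it suffices to show continuity of the orthogonal projection onto $E_m^\perp\oplus F_m^\perp$. From there the methods diverge. The paper, given $m_n\to m$, introduces an auxiliary linear map $\psi_n$ on $E_{m_n}^\perp\oplus F_{m_n}^\perp$ defined by $e+f\mapsto P_{E_m^\perp}e+P_{F_m^\perp}f$, observes $\|\psi_n-I\|\to 0$, and uses this to estimate $\|(I-P)P_n\|$ directly (the symmetric half following from Lemma~\ref{gj}). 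You instead orthogonalize the non-orthogonal sum via $V_m=P_{F_m}(E_m^\perp)$, obtaining $E_m^\perp+F_m^\perp=V_m\oplus^\perp F_m^\perp$ and hence $R_m=P_{V_m}+P_{F_m^\perp}$, and then establish continuity of the finite-rank piece $P_{V_m}$ through an explicit continuous basis and Gram--Schmidt. Your argument is more constructive and fully self-contained, leaning harder on the finite dimensionality of $E_m^\perp$; the paper's argument is shorter but its key estimate $\|\psi_n-I\|\to 0$ implicitly requires uniform boundedness of the oblique projections in the direct sums $E_{m_n}^\perp\oplus F_{m_n}^\perp$, a point your approach sidesteps entirely.
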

\begin{proof}
    From $E_m+F_m=X$ we have $E_m^{\perp}\cap F_m^{\perp}=\{0\}$ and from $\dim (E_m)^{\perp}<+\infty$ we know that $E_m^{\perp}\oplus F_m^{\perp}$ is closed in $X$. Since  $\{E_m\cap F_m\}_{m\in M}$ is continuous if, and only if $\{E_m^{\perp}\oplus F_m^{\perp}\}_{m\in M}$ is continuous, we  just have to show that the last subbundle is continuous. Take $m_n,m\in M$   such that $m_n\to m$ and define $P_n,P$ the orthogonal projections onto $E_{m_n}^{\perp}\oplus F_{m_n}^{\perp}$ and $E_m^{\perp}\oplus F_m^{\perp}$ respectively. We will show that $P_n\to P$ in $\mathcal{L}(X)$. Note that that $E_{m_n}$ is isomorphic to $E_m$ and $F_{m_n}$ is isomorphic to $F_m$ for large $n$ (see Lemma \ref{projfechada}). For each $n\in\mathbb{N}$ define the  linear map
    \begin{align*}
        \psi_n:E_{m_n}&\oplus F_{m_n}\to E_m\oplus F_m\\
        &e+f\mapsto (P_{E_m})e+(P_{F_m})f,
    \end{align*}
    where $P_{E_m}$ and $P_{F_m}$ are the orthogonal projections onto $E_m$ and $F_m$, respectively. Note that
    \begin{equation*}
        \parallel\psi_n-I|_{E_{m_n}\oplus F_{m_n}}\parallel_{\mathcal{L}(E_{m_n}\oplus F_{m_n},X)}\to 0.
    \end{equation*}
   In order to prove that $P_n-P\to 0$ in $\mathcal{L}(X)$, we will prove that 
   $(I-P)P_n\to 0$ and $P(I-P_n)\to 0$ in $\mathcal{L}(X)$. It is sufficient to prove that $(I-P)P_n\to 0$ in $\mathcal{L}(X)$ (the other case follows analogously by Lemma \ref{gj}).
    Given $v\in X$, $\parallel v\parallel\le 1$, we have
    \begin{equation*}
        \parallel (I-P)P_nv\parallel=d(P_nv,E_m\oplus F_m)\le \parallel P_nv-\psi_n(P_nv)\parallel\le\parallel \psi_n-I|_{E_{m_n}\oplus F_{m_n}}\parallel_{\mathcal{L}(E_{m_n}\oplus F_{m_n},X)}\to 0.
    \end{equation*}
\end{proof}

We finish the section defining the notion of lower semicontinuity and showing this property for the subbundles $\mathcal{S}$ and $\mathcal{U}$.

\begin{definition}[Lower Semicontinuous]
	    Let $(M,d)$ be a metric space and $\{E(x):x\in M\}$ be a  family of closed linear subspaces of a Hilbert $X$. The subbundle $\{E(x):x\in M\}$ is  \textbf{lower semicontinuous} at $x_0\in M$ if for any closed subspace $L\subset E(x_0)$ there exists a neighborhood $U\ni x_0$ and a continuous subbundle $\{F(x)\}_{x\in U}$ in $U$  such that $F(x_0)=L$ and $F(x)\subset E(x)$ for each $x\in U$.
	    The subbundle $\{E(x):x\in M\}$ is  lower semicontinuous if it is lower semicontinuous at every $x\in M$.
	\end{definition}

\begin{lemma}\label{lemma-2.2.8}
		The families $\{\mathcal{U}(x)\}_{x\in\mathcal{A}}$ and $\{\mathcal{S}(x)\}_{x\in\mathcal{A}}$ are lower semicontinuous.
	\end{lemma}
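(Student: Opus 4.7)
The plan is to prove lower semicontinuity of $\mathcal{U}$ and $\mathcal{S}$ separately, exploiting the finite-dimensional $C^1$-manifold structure of each $W^u(x_i^*)$ (and of the local stable manifolds $W^s_{loc}(x_i^*)$) to build a local subbundle, and then using the $\lambda$-lemma / inclination estimates already encoded in Lemma \ref{lemma-2.2.7} and Lemma \ref{mudandolemma227} to extend that subbundle across different invariant manifolds.

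For the unstable family, I would fix $x_0\in\mathcal{A}$ and a closed subspace $L\subset\mathcal{U}(x_0)$. By Lemma \ref{lemma-Lyapunov-function} there is a unique $i$ with $x_0\in W^u(x_i^*)$, and by Remark \ref{bebosim} this unstable manifold is a $C^1$-manifold of finite dimension $n_i$ on which $x\mapsto\mathcal{U}(x)=T_xW^u(x_i^*)$ is continuous. A local $C^1$-chart then allows me to build a continuous subbundle $F_0$ on a neighborhood of $x_0$ inside $W^u(x_i^*)\cap\mathcal{A}$ with $F_0(x_0)=L$ and $F_0(x)\subset\mathcal{U}(x)$. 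To extend it to a full neighborhood in $\mathcal{A}$, I would use that $\mathcal{A}=\bigcup_{j=1}^{p}W^u(x_j^*)$ is a finite union, so any sequence $\mathcal{A}\ni x_n\to x_0$ may be assumed (passing to a subsequence) to lie in a single $W^u(x_j^*)$; the Morse--Smale (Smale) order forces $n_j\geq n_i$, and the inclination argument behind Lemma \ref{lemma-2.2.7}, combined with Lemma \ref{mudandolemma227}, produces an $n_i$-dimensional continuous sub-bundle $G(x)\subset T_xW^u(x_j^*)=\mathcal{U}(x)$ on a neighborhood of $x_0$ with $G(x_0)=\mathcal{U}(x_0)$. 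Projecting $L$ orthogonally into $G$ (dimensions match near $x_0$) and patching with $F_0$ yields the desired continuous extension $F$.

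For the stable family, the definition $\mathcal{S}(x)=(D_x\mathcal{T}(t_0))^{-1}\mathcal{S}(\mathcal{T}(t_0)x)$ forces the argument to pass through a pullback. Given $x_0\in\mathcal{A}$, I would choose $t_0\geq 0$ so that $y_0:=\mathcal{T}(t_0)x_0\in W^s_{loc}(x_i^*)\cap B_i$, and first build a continuous sub-bundle $\tilde F$ on a neighborhood of $y_0$ in $\mathcal{A}$ with $\tilde F(y_0)=(D_{x_0}\mathcal{T}(t_0))L$ and $\tilde F(y)\subset\mathcal{S}(y)$; inside $W^s_{loc}(x_i^*)$ this is immediate from the graph representation $\Psi_s:S_0(x_i^*)\to U_0(x_i^*)$ (which yields a continuous family of projections onto the tangent spaces), and one extends across distinct stable manifolds by an inclination argument dual to the unstable case. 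Then I would set $F(x):=(D_x\mathcal{T}(t_0))^{-1}\bigl(\tilde F(\mathcal{T}(t_0)x)\cap R(D_x\mathcal{T}(t_0))\bigr)$. Continuity of this family will follow by first invoking Proposition \ref{propinterseccont} (to guarantee that the intersection of the continuous bundle $\tilde F\circ\mathcal{T}(t_0)$ with $R(D_x\mathcal{T}(t_0))$ remains continuous) and then Lemma \ref{contiparatras} (to propagate continuity through the injective pullback), while the defining pullback identity gives $F(x_0)=L$ and $F(x)\subset\mathcal{S}(x)$.

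The main obstacle in either case is the jump between distinct invariant manifolds near $x_0$: the naive section $x\mapsto T_xW^u(x_{j(x)}^*)$ (or its stable analogue) has no reason to vary continuously, because the index $j(x)$ can change and the ambient tangent-space dimensions may jump. The ingredient doing the real work is precisely the $\lambda$-lemma / inclination reduction already exploited in Lemma \ref{lemma-2.2.7}, which selects, inside the possibly larger ambient tangent space at $x$, a canonical sub-bundle of the correct dimension that does vary continuously; it is into that sub-bundle, and not the full tangent space, that the prescribed subspace $L$ should be projected in order to obtain the continuous bundle $F$ required by the definition.
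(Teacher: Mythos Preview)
Your overall strategy matches the paper's: for $\mathcal{U}$ you replicate Pilyugin's argument via the $\lambda$-lemma (encapsulated in Lemma~\ref{lemma-2.2.7} and Lemma~\ref{mudandolemma227}), and for $\mathcal{S}$ you pull back along the defining formula of Proposition~\ref{extensaos} and appeal to Proposition~\ref{propinterseccont} and Lemma~\ref{contiparatras} for continuity --- which is precisely the mechanism the paper points to when it says the proof is ``very similar'' to the construction of the $S_i$ in Theorem~\ref{lemma2.2.9}.

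There is, however, a concrete gap in your stable case. You propose to apply Proposition~\ref{propinterseccont} to the pair $\tilde F(\mathcal{T}(t_0)x)$ and $R(D_x\mathcal{T}(t_0))$, but that proposition needs both $E_m+F_m=X$ and $\dim E_m^\perp<\infty$. With your choice $\tilde F(y_0)=(D_{x_0}\mathcal{T}(t_0))L$ for an \emph{arbitrary} $L\subset\mathcal{S}(x_0)$, neither hypothesis is available: $\tilde F$ need not have finite codimension, and $\tilde F(\,\cdot\,)+R(D_x\mathcal{T}(t_0))$ need not be all of $X$ (you only know $\mathcal{S}+\mathcal{U}=X$ with $\mathcal{U}\subset R(D_x\mathcal{T}(t_0))$, which gives $\mathcal{S}+R(D_x\mathcal{T}(t_0))=X$, not $\tilde F+R(D_x\mathcal{T}(t_0))=X$). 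So the intersection step, as written, does not go through.

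The fix is the one implicit in Theorem~\ref{lemma2.2.9}: do the pullback with the \emph{full} stable bundle first. Build a continuous $\tilde G$ near $y_0$ with $\tilde G(y_0)=\mathcal{S}(y_0)$ and $\tilde G(y)\subset\mathcal{S}(y)$; then $\tilde G$ has finite codimension (equal to $\dim U_0(x_i^*)$) and $\tilde G+R(D_x\mathcal{T}(t_0))=X$, so Proposition~\ref{propinterseccont} and Lemma~\ref{contiparatras} legitimately yield a continuous bundle $G(x)=(D_x\mathcal{T}(t_0))^{-1}\bigl(\tilde G(\mathcal{T}(t_0)x)\cap R(D_x\mathcal{T}(t_0))\bigr)\subset\mathcal{S}(x)$ with $G(x_0)=\mathcal{S}(x_0)$. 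Only then, for a general $L\subset\mathcal{S}(x_0)$, set $F(x)=P_{G(x)}L$; since $P_{G(x)}\to P_{G(x_0)}$ and $P_{G(x_0)}|_L=\mathrm{Id}_L$, Lemma~\ref{lemaajudacoro} gives continuity of $F$, while $F(x)\subset G(x)\subset\mathcal{S}(x)$ and $F(x_0)=L$.
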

\begin{proof}
   We can replicate the proof in \cite[Lemma 2.2.8]{Pilyugin} to prove the lower semicontinuity for the family $\mathcal{U}$. In our case, where  the derivative $D_x\mathcal{T}(t)$ is not surjective, the proof of the lower semicontinuity of  the family $\mathcal{S}$ does not follow as in the finite dimensional case. Despite that, it is still possible to prove this property for $\mathcal{S}$. Since the strategy of the proof is  similar to the construction of the subbundles $S_i$   in Theorem \ref{lemma2.2.9}, we will not replicate it here. In fact,  the proof of Theorem \ref{lemma2.2.9} is even more complex. Therefore we choose to prove Theorem \ref{lemma2.2.9} rigorously instead of proving the lower semicontinuity of $\mathcal{S}$.
\end{proof}

\begin{remark}\label{remarklsc}
	Let $M$ be a metric space, $X$ be a Hilbert space and $\{E(x)\subset X\}_{x\in M}$ and $\{L(x)\subset X\}_{x\in M}$ be lower semicontinuous subbundles in $M$ such that
	$$E(x_0)+L(x_0)=X, $$
	for some $x_0\in M$. Then it follows from Lemma \ref{lemaproj2} that there exists a neighborhood  $V\ni x_0$  such that
	$$E(x)+L(x)=X,\ \forall x\in V.$$
	\end{remark}

\subsection{Compatible Subbundles}\label{subseccompatible}
In this subsection we construct the compatible subbundles \cite{robinson1974structural, Pilyugin} on the global attractor  $\mathcal{A}$. The construction of compatible subbundles was first announced in 
\cite{robinson1974structural}, where the phase space is a compact smooth manifold (without border). The reader can also consult the proof in \cite[Lemma 2.2.9]{Pilyugin}. We recall that the global attractor $\mathcal{A}$ is not necessarily a manifold in our case.

As we said before, the construction of the compatible subbundles will allow us later, with tools from functional analysis, to apply the Banach fixed point theorem to obtain the Lipschitz Shadowing property in the global attractor $\mathcal{A}$. To apply the Banach fixed point theorem we can follow similarly to the proof of Lipschitz Shadowing in finite dimension \cite{Pilyugin,santamaria2014distance}. In fact, the infinite dimension of $X$ is not a problem in this part of the proof. Therefore, we  just have to  the construct the compatible subbundle  to guarantee the Lipscthitz Shadowing property in $\mathcal{A}$.

The construction of the compatible subbundles in finite dimensional 
compact manifolds is  via  mathematical induction and  uses the surjectivity of the derivative $D_x\mathcal{T}(t)$, that does not hold in the infinite dimensional setting, to prove the induction hypothesis. To overcome this problem, we will introduce Lemmas \ref{feliz} and \ref{felizz}, related to the dynamics of the attractor $\mathcal{A}$.

Moreover, to construct one the subbundles we  need to  ``evolve backwards continuously''  with the derivative, which is not straightforwad if the derivative is not an isomorphism. In fact, we will use Lemmas \ref{contiparatras} and \ref{propinterseccont} to overcome this obstacle. 

Finally, if we have a diffeomorphism defined in  a compact (finite dimensional) manifold, we can use the Lyapunov norm \cite[Lemma 1.2.1]{Pilyugin} to assume without lost of generality that the constant of hiperbolicity $C$ (see Definition \ref{hyperbolicset}) is equal to $1$. This is not the case in the infinite dimensional setting. To solve this, we have Remark \ref{precisadisso}.

Let us now construct the compatible subbundles $S_i$ and $U_i$ for the infinite dimensional case.

\begin{theorem}\label{lemma2.2.9}
Let $X$ be a Hilbert space, $\{\mathcal{T}(t): t\geq 0\}$ be a Morse-Smale  semigroup in $X$ and\ \ $\Omega=\{x^*_1,\cdots,x^*_p\}$ be its non-wandering set.  Then, for each $i\in\{1,\dots, p\}$ there exist $\tilde{C}>0$, $\lambda_1\in(0,1)$, a neighborhood $\mathcal{O}_i\subset \mathcal{A}$ of $x^*_i$  and  subbundles $S_i,U_i$ defined in $\overline{\mathcal{O}_i}\cup\gamma(\mathcal{O}_i)$ and continuous in $\overline{\mathcal{O}_i}$
such that:

\begin{enumerate}
\item\label{item1} For all $x\in\gamma(\mathcal{O}_i)$  it holds
\begin{align*}
(D_x\mathcal{T}(t))S_i(x)\subset S_i(\mathcal{T}(t)x),\forall\ t\ge 0\ \ \text{and}\ \ (D_x\mathcal{T}(t))U_i(x)= U_i(\mathcal{T}(t)x),\forall\ t\in\mathbb{R}.
\end{align*}
\item\label{item2} $S_i(x_i^*)=\mathcal{S}(x_i^*)$ and $U_i(x_i^*)=\mathcal{U}(x_i^*)$ for all $i\in\{1,\dots,p\}$.
\item\label{item3} $S_j(x)\subset S_k(x)$, $U_k(x)\subset U_j(x)$, for every $x\in \gamma^+(\mathcal{O}_j)\cap \gamma^-(\mathcal{O}_k)$.
\item\label{item4} $S_i(x)\oplus U_i(x)=X$, for all $x\in \gamma^-(\mathcal{O}_i)$.
\item\label{item5} For every $x\in \mathcal{O}_i$ it holds
\begin{align}\label{hipfinita}
     &\|(D_x\mathcal{T}(t))v^s\|\leq \tilde{C}\lambda_1^t \|v^s\|, \forall v^s\in S_i(x),\ \forall t\in[0,1],\\
 &\label{hipinfinita2}\|(D_x\mathcal{T}(-t))v^u\|\leq \tilde{C}\lambda_1^t \|v^u\|, \forall v^u\in U_i(x),\ \forall t\in[0,1].
\end{align}
\end{enumerate}
\end{theorem}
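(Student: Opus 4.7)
The plan is to proceed by induction on $i$, ordered by the Lyapunov function $V$ from Lemma \ref{lemma-Lyapunov-function}. For each equilibrium $x_i^*$ I would start from the base definitions $S_i(x_i^*) := S_0(x_i^*) = \mathcal{S}(x_i^*)$ and $U_i(x_i^*) := U_0(x_i^*) = \mathcal{U}(x_i^*)$ given by the hyperbolic splitting of Definition \ref{hyperbolicset}, which immediately secures item (2). Lemma \ref{dugtheorem} extends these to continuous subbundles on an open ball around $x_i^*$; restricting to the attractor and shrinking, one obtains a compact neighborhood $\mathcal{O}_i \subset \mathcal{A}$ on which Lemma \ref{lemaproj2} guarantees the splitting $S_i(x)\oplus U_i(x)=X$. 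The hyperbolic estimates in item (5) then transfer from $x_i^*$ to $\mathcal{O}_i$ by the continuity of $x \mapsto D_x\mathcal{T}(t)$ (supplied by (H1)--(H2)) together with the continuity of the subbundles, at the price of replacing each hyperbolic rate $\lambda$ by some $\lambda_1 \in (\lambda,1)$ and absorbing the various constants into a single $\tilde C$.

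To extend $U_i$ along orbits in $\gamma(\mathcal{O}_i)$, I set $U_i(\mathcal{T}(s)x) := (D_x\mathcal{T}(s))U_i(x)$ for $x \in \mathcal{O}_i$ and $s \in \mathbb{R}$, using injectivity of $D_x\mathcal{T}(s)$ on $\mathcal{A}$ to make sense of negative $s$; because $U_i$ is finite-dimensional and $D\mathcal{T}(s)$ is an isomorphism onto its range, the dimension is preserved and equality in the invariance statement of item (1) is automatic. Continuity in $x$ along orbits comes from Lemma \ref{lemaajudacoro} (forward times) and Lemma \ref{contiparatras} (backward times), while continuity in $s$ is exactly hypothesis (H2).

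The subbundle $S_i$ is the main difficulty. Forward extension $S_i(\mathcal{T}(s)x) := (D_x\mathcal{T}(s))S_i(x)$ for $s \geq 0$ is continuous again by Lemma \ref{lemaajudacoro}, but backward extension on $\gamma^-(\mathcal{O}_i)$ must cope with the failure of surjectivity of $D_x\mathcal{T}(s)$ in the infinite-dimensional setting. The definition I would use is
\[
S_i(x) := (D_x\mathcal{T}(s))^{-1}\bigl(S_i(\mathcal{T}(s)x) \cap R(D_x\mathcal{T}(s))\bigr)
\]
whenever $\mathcal{T}(s)x \in \mathcal{O}_i$, whose continuity splits into two pieces: Proposition \ref{propinterseccont} handles the intersection (requiring $S_i(\mathcal{T}(s)x) + R(D_x\mathcal{T}(s)) = X$ and the finite codimension of $R(D_x\mathcal{T}(s))$, both of which can be arranged since $S_i$ has codimension $\dim U_0(x_i^*)<\infty$ and $\mathcal{U}(\mathcal{T}(s)x) \subset R(D_x\mathcal{T}(s))$), and Lemma \ref{contiparatras} handles the linear pullback itself. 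The splitting $S_i \oplus U_i = X$ of item (4) then propagates from $\mathcal{O}_i$ to $\gamma^-(\mathcal{O}_i)$ by invariance of $U_i$ together with the intersection formula for $S_i$.

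Finally, the nesting in item (3) is verified via the gradient structure: for $x \in \gamma^+(\mathcal{O}_j) \cap \gamma^-(\mathcal{O}_k)$ the orbit through $x$ traverses from a neighborhood of $x_j^*$ to a neighborhood of $x_k^*$, so Lemmas \ref{lemma2.2.5}--\ref{lemma2.2.6} combined with the transversality condition of Definition \ref{definitionms}(5) force the forward transport of $S_j$ to land inside $S_k$ and the backward transport of $U_k$ to land inside $U_j$, confirming the inclusions. The hardest part of the whole proof is the backward continuity of $S_i$: the absence of surjectivity of $D_x\mathcal{T}(s)$ means the classical construction of Robbin cannot be copied verbatim, and the intersection trick above --- which works only because $\mathcal{U}(x) \subset R(D_x\mathcal{T}(s))$ provides exactly the transversality needed to invoke Proposition \ref{propinterseccont} --- is the essential new ingredient for passing from the finite-dimensional case to the present infinite-dimensional setting.
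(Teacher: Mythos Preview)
Your construction has a genuine gap: item~(3), the nesting $S_j\subset S_k$ and $U_k\subset U_j$, does not follow from what you do. You build $S_i,U_i$ purely from the local hyperbolic data at $x_i^*$ (the subbundles $S'_i,U'_i$ of Lemma~\ref{dugtheorem}) and then transport them along orbits. But when an orbit leaves $\mathcal{O}_j$ and later enters $\mathcal{O}_k$, there is no mechanism that forces the transported $U'_k$ to land inside the transported $U'_j$; these are just two unrelated local extensions of tangent spaces. Your appeal to transversality is misplaced: condition~(5) of Definition~\ref{definitionms} asserts that a \emph{sum} equals $X$, not that one subspace is contained in another. Although you announce an induction on $i$, your construction of each $(S_i,U_i)$ never uses the previously built $(S_j,U_j)$, so the induction is idle.

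The paper's construction is inductive in an essential way. To build $U_i$ one works on a cross-section $D=V^{-1}(i+d)\cap W^s(x_i^*)\cap\mathcal{A}$ and, for each $x\in D$, selects a subbundle $G^{(x)}(y)\subset U_{j(x)}(y)$ (with $j(x)>i$ already constructed) satisfying $S'_i(y)\oplus G^{(x)}(y)=X$; these pieces are glued by a partition of unity into $G^Q$, which by construction sits inside every relevant $U_{k'}$ --- this is exactly item~(3). Only then is $G^Q$ extended along orbits and to $W^u(x_i^*)$, with continuity supplied by the inclination argument of Lemma~\ref{lemma-2.2.7}. The induction also carries an extra hypothesis, $\mathcal{S}(x)+U_j(x)=X$ for $x\in\gamma^+(\mathcal{O}_j)$, which in finite dimensions follows from surjectivity of $D\mathcal{T}$ but here needs a separate dynamical argument (Lemmas~\ref{feliz}--\ref{felizz}); your outline does not address this. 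Your intersection-and-pullback idea (Proposition~\ref{propinterseccont} with Lemma~\ref{contiparatras}) is indeed the right tool for backward continuity of $S_i$, and the paper uses it --- but it must be applied to the inductively glued subbundle, not to $S'_i$.
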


\begin{proof}
First of all, note that items (\ref{item4}) and (\ref{item5}) are consequences of items (\ref{item1}), (\ref{item2}), the hyperbolicity of $x^*_i$ and the continuity of the subbundles (see Lemma \ref{lemaproj2}). Therefore we will prove only items (\ref{item1}), (\ref{item2}) and (\ref{item3}). We will first prove the existence of the subbundles $U_i$. The construction of $U_i$ actually follows as in the finite dimensional case (the subbundles $U_i$ will be finite dimensional), except in the last step, where we prove the induction hypothesis \ref{hipinducao}. In fact, to show \ref{hipinducao} in finite dimension the surjectivity of the derivative is required. We will reproduce the whole construction of $U_i$ here because its understanding will be important to construct the subbundles $S_i$.

If the derivative $D_x\mathcal{T}(t)$ is an isomorphism for all $x\in\mathcal{A}$ and $t\ge0$ (like the finite dimensional case), then the construction of $S_i$ follows analogously as the construction of $U_i$. In our case the construction of the family $S_i$ is  more complex since the derivative is not surjective and we still need to  ``evolve backwards continuously''  with the derivative. Hence, after we finish the construction of $U_i$, we will also provide a proof for the existence of the families $S_i$.

Let us start the construction of $U_i$. As in the finite dimensional case, we will do the proof by induction, assuming that for each $j\in\{i+1,\dots,p\}$ we have  neighborhoods $\mathcal{O}_j$ and  subbundles $U_j$ on $\gamma(\mathcal{O}_j)\cup \overline{\mathcal{O}_j}$ satisfying items (\ref{item1})-(\ref{item5}) and  the extra property below 
\begin{equation}\label{hipinducao}
    \mathcal{S}(x)+U_j(x)=X,\ \forall x\in\gamma^+(\mathcal{O}_j).
\end{equation}

In the finite dimensional case, property \eqref{hipinducao} is justified because the derivative $D_x\mathcal{T}(t)$ is surjective for all $t\ge0$ and $x\in\mathcal{A}$. In fact, once we have proved items (2) and (4), it follows from Lemma \ref{lemma-2.2.8} and Remark \ref{remarklsc} that $\mathcal{S}(x)+U_j(x)=X$ close to $x_j^*$. If the derivative is surjective, we can carry this transversality through the positive orbit of $\mathcal{O}_j$. In our case, we have to show the  induction hypothesis \eqref{hipinducao} using another methods. To maintain the organization of this proof we will assume that \eqref{hipinducao}  holds and justify  later (Lemmas \ref{feliz} and \ref{felizz}) why we can assume this condition without loss of generality.

Note that for $j=p$ the neighborhood $\mathcal{O}_p:=W^u(x^*_p)$ of $x_p^*$ and the continuous subbundle 
\begin{equation*}
    U_p(x)=\mathcal{U}(x),\ \forall x\in \gamma(\mathcal{O}_p)=\mathcal{O}_p
\end{equation*}
satisfy items (1)-(5),  where item (2) follows from Remark \ref{bebosim}. Moreover, if  follows from Lemma \ref{extensaos} that condition  \eqref{hipinducao} holds. This proves our induction basis.  Therefore we can assume that our induction hypothesis holds for $j\in\{i+1,\dots,p\}$. We will assume, without loss of generality, that the neighborhood $\mathcal{O}_j$ satisfies
\begin{equation}\label{ajudaeq}
    \mathcal{O}_j\subset V^{-1}\left(j-\dfrac{1}{4},j+\dfrac{1}{4}\right).
\end{equation}

We can also assume that $\mathcal{O}_j$ is a closed (compact) neighborhood of $x_j^*$. Fix the neighborhood $V_i$ of $x_i^*$ in $\mathcal{A}$ such that 
$$V_i\subset V^{-1}\left(i-\dfrac{1}{4},i+\dfrac{1}{4}\right)$$
and we have  continuous subbundles $S_i',U_i'$ on $V_i$ satisfying  \eqref{remarkmudancacoordenadas} and \eqref{mudancacoordenadas}.  Take $d'>0$ as in Lemma \ref{lemma2.2.6} such that
\begin{equation}
    W^s(x^*_i)\cap V^{-1}(i+d)\cap\mathcal{A}\subset V_i, \forall d\in[0,d'].
\end{equation}

Fix $d\in(0,d']$ and define once more the set $D= W^s(x^*_i)\cap V^{-1}(i+d)\cap\mathcal{A}$. Note that
\begin{equation*}
    D\subset\bigcup_{j=i+1}^{p}W^u(x_j^*)\subset \bigcup_{j=i+1}^{p} \gamma^+(\text{int}\mathcal{O}_j)
\end{equation*}
and since $D$ is compact there exists $q>0$ such that
\begin{equation*}
    D\subset \bigcup_{j=i+1}^{p} \gamma_{[0,q]}(\text{int}\mathcal{O}_j).
\end{equation*}

For each $x\in D$ define $j=j(x)$ such that 
\begin{equation*}
    x\in \gamma_{[0,q]}(\mathcal{O}_j) \ \ \text{and}\ \  x\notin\gamma_{[0,q]}(\mathcal{O}_k),\ \forall i<k<j.
\end{equation*}
This means that the neighborhood $\mathcal{O}_j$ is the last one that $x$ visited (until $q$). Since the neighborhoods $\mathcal{O}_k$ are compact, there exists a neighborhood $Z(x)\subset V_i$ of $x$ in $\mathcal{A}\cap V^{-1}(i+d)$ such that
\begin{equation*}
    Z(x)\subset \gamma_{[0,q]}\mathcal{O}_j\ \ \text{and}\ \ Z(x)\cap \gamma_{[0,q]}\mathcal{O}_k=\emptyset,\ \forall i<k<j.
\end{equation*}
\begin{itemize}
\item[\underline{Claim:}] $Z(x)\cap\gamma^+(\mathcal{O}_k)=\emptyset, \ \forall i<k<j$.

Indeed, suppose that there exists $y\in Z(x)\subset\gamma_{[0,q]}\mathcal{O}_j$ such that
$$y\in\gamma^+(\mathcal{O}_k)\setminus\gamma_{[0,q]}(\mathcal{O}_k), $$
for some $i<k<j$. Then $y=\mathcal{T}(s)y'$ for some $s>q$ and $y'\in \mathcal{O}_k$ satisfying
\begin{equation}
   V(y')<k+\dfrac{1}{4}<j-\dfrac{1}{4}. 
\end{equation}
The last inequality follows from (\ref{ajudaeq}). From the decreasing property of the Lyapunov function we have
\begin{equation*}
    V(\mathcal{T}(t)y')<j-\dfrac{1}{4},\ \forall \ 0\le t\le s.
\end{equation*}
Hence $y\notin\gamma_{[0,q]}(\mathcal{O}_j)$, which is a contradiction. This proves the claim.
\end{itemize}

From the continuity of $U_j$ in $\gamma^{+} (\mathcal{O}_j)$, the fact that $S_i'(x)=\mathcal{S}(x)$ and from  (\ref{hipinducao}) there exists continuous subbundles $G^{(x)}$ in ${Z(x)}$ such that
\begin{equation}\label{mudancacoordenadas3}
    G^{(x)}(y)\subset U_j(y)\ \ \text{and}\ \ S_i'(y)\oplus G^{(x)}(y)=X,\ \forall y\in Z(x).
\end{equation}

It follows from (\ref{mudancacoordenadas}) and (\ref{mudancacoordenadas3}) that $G^{(x)}(y)$ can be written as the graph of a continuous linear map $g^{(x)}_y:U'_i(y)\to S'_i(y)$ for all $y\in Z(x), x\in D$. Let $Q$ be a closed neighborhood of $D$ in $V^{-1}(i+d)\cap\mathcal{A}$ and $x_1,\dots,x_n\in D$ such that $Q\subset\bigcup\limits_{k=1}^nZ(x_k)$ . 
Take a smooth partition of unity $\{\beta_k:Q\to\mathbb{R}\}_{k=1}^n$ subordinated to the covering $\{Z(x_k)\}_{k=1}^n$. Define  the continuous  map $g^Q:Q\to\mathcal{L}(X)$ as
\begin{equation}
g^Q(y)=\sum_{k=1}^{n}\beta_k(y)g^{(x_k)}_yP_{U'_i(y)},\ \forall y\in Q.
\end{equation}

 Denoting by $G^Q(y)$ the graph of $g^Q(y)|_{U'_i(y)}$ for all $y\in Q$, we obtain that $G^Q:=\{G^Q(y)\}_{y\in Q}$ is a continuous subbundle in $Q$.
 We want to show that
\begin{equation}\label{equacaoqualquer}
    G^Q(y)\subset U_{k'}(y), \ \forall y\in Q\cap\gamma^+(\mathcal{O}_k'),\ \forall i<k'\le p.
\end{equation}
 Given $v\in U'_i(y)$ we have
 \begin{equation*}
v+g^Q(y)v=\sum_{k=1}^n\beta_k(y)(v+g^{(x_k)}_yv)\in  U_j(y).
 \end{equation*}
We know from our claim that $j\le k'$ and therefore it follows from (\ref{mudancacoordenadas3}) and our induction hypothesis that
\begin{equation*}
   v+g^Q(y)v\in U_j(y)\subset U_{k'}(y),
\end{equation*}
which proves (\ref{equacaoqualquer}).

Now we extend our subbundle $G^Q$ in $\gamma(Q)\cup W^u(x^*_i)$ as
\begin{align*}
    G^Q(\mathcal{T}(t)x)&:=(D_x\mathcal{T}(t))G^Q(x),\ \forall x\in Q,\ \forall t\in\mathbb{R},\\
    G^Q(x)&:=\mathcal{U}(x),\ \forall x\in W^u(x^*_i).
\end{align*}

Note that $G^Q$ already satisfies items (\ref{item1}), (\ref{item2}) and (\ref{item3}). If we reduce $Q$ we obtain the continuity of the subbundle by Lemma \ref{lemma-2.2.7}. From Lemma \ref{lemma2.2.5} the set $\gamma^+(Q)\cup W^u(x^*_i)$ contains a neighborhood $\mathcal{O}_i$ of $x^*_i$ that, together with  the subbundle $U_i(x):=G_i(x)$ for all $x\in \gamma(\mathcal{O}_i)\cup\overline{\mathcal{O}_i}$, satisfy every condition from our thesis.  This finishes the proof for the families $U_i$.

Now we proceed to construct the family $S_i$. Since the proof will follow the same reasoning as before, we will provide only the essential parts. Once more we will prove by induction. Note that the neighborhood $\mathcal{O}_1=W^s(x_1^*)$ and the family $S_1(x)=X$ for all $x\in W^s(x_1^*)$ satisfy the properties required. Lets construct $S_i$ and $\mathcal{O}_i$ assuming that  $S_j$ and $\mathcal{O}_j$ are constructed for $j\in\{1,\dots,i-1\}$, where $S_j$ is continuous in $\overline{\mathcal{O}_j}$. We can proceed as before until the change of coordinates \eqref{mudancacoordenadas3}. Indeed, the  continuous subbundles $G^{(x)}$ in \eqref{mudancacoordenadas3} exists  because the subbundle $U_{j(x)}$ is continuous in $\gamma^+(\mathcal{O}_{j(x)})$. The continuity of $S_{j(x)}$  in $\gamma^-(\mathcal{O}_{j(x)})$ is not straightforward and we have to justify it.

Arguing as in the proof of Proposition \ref{extensaos}, we recall that  for each $x\in\mathcal{A}$ and $t>0$ we have $$\mathcal{U}(x)=(D_{\mathcal{T}(-t)x}\mathcal{T}(t))\mathcal{U}(\mathcal{T}(-t)x),$$ that is,
\begin{equation}\label{maoehhi}
    \mathcal{U}(x)\subset R(D_{\mathcal{T}(-t)x}\mathcal{T}(t)),\ \forall t\ge0,\ \forall x\in\mathcal{A}.
\end{equation}

Let us remember that in the construction of $S_i$ we have
\begin{equation*}
 D=V^{-1}(i-d)\cap W^u(x^*_i)
\end{equation*}
and $U_i'(x)=\mathcal{U}(x)$ for each $x\in V_i\cap W^u(x^*_i)$. 
Fix $x\in D$ and $t_0>0$ such that $\mathcal{T}(t_0)x\in \mathcal{O}_{j(x)}$. Since the family $S_{j(x)}$ is continuous in $\mathcal{O}_{j(x)}$ and $\mathcal{U}$ is lower semicontinuous, we can use \eqref{hipinducao} to fix a neighborhood $Z(\mathcal{T}(t_0)x)\subset \mathcal{O}_{j(x)}$ of $\mathcal{T}(t_0)x$ such that
\begin{equation*}
S_{j(x)}(y)+\mathcal{U}(y)=X,\ \forall y\in Z(\mathcal{T}(t_0)x).
\end{equation*}

Note that
\begin{equation*}
       \dim (S_{j(x)}(y))^{\perp}=m<+\infty,\ \ \forall y\in Z(\mathcal{T}(t_0)x).
   \end{equation*}
   
Hence, from \eqref{maoehhi} we have
\begin{equation*}
    S_{j(x)}(y)+R(D_{\mathcal{T}(-t_0)y}\mathcal{T}(t_0))=X,\ \ \forall y\in Z(\mathcal{T}(t_0)x)
\end{equation*}

 Therefore, we can use Proposition \ref{propinterseccont} to guarantee  that the subbundle $$\{L(y):=S_{j(x)}(y)\cap R(D_{\mathcal{T}(-t_0)y}\mathcal{T}(t_0))\}_{y\in Z(\mathcal{T}(t_0)x)}$$ is continuous.
Now we can use Lemma \ref{contiparatras} to define a continuous subbundle $G'_x$ in $Z(x):=\mathcal{T}(-t_0)Z(\mathcal{T}(t_0)x)$ as 
\begin{equation*}
    G'_x(\mathcal{T}(-t_0)y):=(D_{y}\mathcal{T}(-t_0))L(y)=(D_{\mathcal{T}(-t_0)y}\mathcal{T}(t_0))^{-1}L(y),\ \forall y\in Z(\mathcal{T}(t_0)x).
\end{equation*}

Since $\mathcal{U}(y)\subset R(D_{\mathcal{T}(-t_0)y}\mathcal{T}(t_0))$ we have
\begin{equation*}
    L(y)+\mathcal{U}(y)=(S_j(y)+\mathcal{U}(y))\cap R(D_{\mathcal{T}(-t_0)y}\mathcal{T}(t_0)) =R(D_{\mathcal{T}(-t_0)y}\mathcal{T}(t_0)),\ \forall y\in Z(\mathcal{T}(t_0)x)
\end{equation*}
and from Lemma \ref{iminv}
\begin{equation}
G'_x(y)+\mathcal{U}(y)=X,\ \forall y\in Z(x).
\end{equation}

Therefore, there exists a continuous subbundle $\{G^{(x)}(y)\}_{y\in Z(x)}$ in $Z(x)$ (possibly reducing this neighborhood) satisfying  
\begin{equation*}
    G^{(x)}(y)\subset S_{j(x)}(y)\ \ \text{and}\ \ G^{(x)}(y)\oplus U'_i(y)=X,\ \ \forall y\in Z(x).
\end{equation*}

Now we can proceed as before to construct the continuous subbundle $G^Q$ on $Q$ and extend this subbundle to $\gamma^-(Q)$ as usual
\begin{equation*}
    G^Q(\mathcal{T}(-t)x)
:=(D_{x}\mathcal{T}(-t))G^Q(x),\ \forall x\in Q.
\end{equation*}

We know that $G^Q$ is continuous in $Q$ and $G^Q(y)+ \mathcal{U}(y)=X$ for all $y\in Q$ (remember that $\mathcal{U}$ is lower semicontinuous). Hence $G^Q(y)+R(D_{\mathcal{T}(-t)y}\mathcal{T}(t))=X$ for $y\in Q$ and we can proceed as before to conclude that $G^Q$ is continuous in  $\gamma^-(Q)$. Now we can proceed as in the construction of the subbundles $U_i$ to conclude the proof.
\end{proof}

We remember that the proof of Theorem \ref{lemma2.2.9} is not finished yet. In fact, it remains to justify condition \eqref{hipinducao}. The following lemmas are devoted to show that we can assume the induction hypothesis \eqref{hipinducao} without lost of generality, even without the surjectivity of the derivative $D_x\mathcal{T}(t)$ for $x\in\mathcal{A}$ and $t\ge0$.

\begin{lemma}\label{feliz}
    Let $(M,d)$ be a metric space and $\mathcal{T}=\{\mathcal{T}(t):t\ge0\}$ be a  semigroup in $M$ with finite set of equilibria $\mathcal{E}=\{x_1^*,\dots,x_p^*\}$. Assume that  $\mathcal{T}$ is Gradient, that is, $\mathcal{T}$ has a Lyapunov function $V$ (see Lemma \ref{lemma-Lyapunov-function}). Given $i,k\in\{1,\dots,p\}$, $k<i$, $d\in (0,\frac12)$ and $\epsilon,\epsilon'>0$ there exists $\delta>0$ such that if $x\in B(x^*_i,\delta)\cap\mathcal{A}$ satisfies 
    \begin{enumerate}
        \item there exists $t>0$ such that $\mathcal{T}(t)x\in V^{-1}(k+d)\cap W^s(x_k^*)$,
        \item $d(\mathcal{T}(t)x,x_j^*)\ge \epsilon',\ \forall t\in\mathbb{R},\ \forall k<j<i$,
    \end{enumerate}
    then, there exists a bounded global solution $\xi$, with $\xi(0)\in W^u(x_i^*)$, such that
    \begin{equation*}
        d(\mathcal{T}(t)x,\xi(0))<\epsilon.
    \end{equation*}
    Moreover, $d(\xi(t),x_j^*)\ge \epsilon'$ for all $t\in\mathbb{R}$ and $k<j<i.$
\end{lemma}

\begin{proof}
We will prove by contradiction. Suppose that there exist $i,k\in\{1,\dots,p\}$, $k<i$, $d\in(0,\frac12)$, $\epsilon,\epsilon'>0$, $\mathcal{A}\ni x_n\to x^*_i$ and $t_n\to+\infty$ such that 
\begin{equation*}
    \mathcal{T}(t_n)x_n\in V^{-1}(k+d)\cap W^s(x_k^*),
\end{equation*}
 \begin{equation}\label{sinuca}
            d(\mathcal{T}(t)x_n,x_j^*)\ge \epsilon',\ \forall t\in\mathbb{R},\ \forall k<j<i,\ \forall n\in\mathbb{N}
        \end{equation}
        and
        \begin{equation}\label{paodequeijo}
               d(\mathcal{T}(t_n)x_n,W^u(x^*_i))\ge\epsilon.
        \end{equation}
        
        We assume without loss of generality that
        \begin{equation}\label{cafe}
            i-\dfrac12<V(x_n)<i+\dfrac{1}{2}.
        \end{equation}
        
        For each $n$ take $s_n>0$ such that $V(\mathcal{T}(s_n)x_n)=i-\dfrac{1}{2}$. Despite that $s_n\to+\infty$ we will show that $t_n-s_n$ is bounded. Assume that $t_n-s_n\to+\infty$ and define the sequence of global solutions
        \begin{equation*}
            \xi_n(t)=\mathcal{T}\left(t+\dfrac{t_n+s_n}{2}\right)x_n,\ \forall t\in \left[\frac{-t_n-s_n}{2},+\infty\right).
        \end{equation*}

        Note that
        \begin{equation*}
            k+d \le V(\xi_n(t))\le i-\dfrac{1}{2},\ \forall t\in\left[\dfrac{s_n-t_n}{2},\dfrac{t_n-s_n}{2}\right].
        \end{equation*}
        
        Hence, we can use Proposition \ref{subsequenciasolucoes},with $\sigma_k=\frac{t_k+s_k}{2}$, $u_k=x_k$ and $\xi_k(s)=\mathcal{T}(s+\sigma_k)x_k$ for $s\ge-t_k$, to obtain  a bounded global solution $\xi(\cdot)$ such that (up to a subsequence)
        \begin{equation*}
            \xi_n(t)\xrightarrow{n\to+\infty}\xi(t),\ \forall t\in\mathbb{R}
        \end{equation*}
        and $k+d \le V(\xi(t))\le i-\dfrac{1}{2},\ \forall t\in\mathbb{R}$. Since Gradient semigroups  satisfies item (2) from Lemma \ref{lemma-Lyapunov-function} \cite{hale2010asymptotic,aragao2013non,Bortolan-Carvalho-Langa-book}, then $\xi(0)\in W^s(x_j^*)$ for some $k<j<i$. This contradicts (\ref{sinuca}). Thus $t_n-s_n$ is bounded, that is, there exists $C<0$ such that
        \begin{equation}\label{bounded}
            C \le s_n-t_n,\ \forall n\in\mathbb{N}.
        \end{equation}
        
For each $n\in\mathbb{N}$ consider the global solutions
\begin{equation*}
    \xi_n(t)=\mathcal{T}(t+t_n)x_n,\ \forall t\in\mathbb{R}.
\end{equation*}

From Proposition \ref{subsequenciasolucoes}) there exists a bounded global solution  $\xi(\cdot)$  such that
\begin{equation}\label{mocaccino}
    \xi_n(t)\xrightarrow{n\to+\infty}\xi(t), \forall t\in\mathbb{R}.
\end{equation}

It follows from (\ref{cafe}) that
\begin{equation}\label{capuccino}
    i-\dfrac{1}{2}\le V(\xi_n(t))\le i+\dfrac{1}{2},\ \forall t\le [-t_n,s_n-t_n],\ \forall n\in\mathbb{N}.
\end{equation}

Putting (\ref{bounded}) and (\ref{capuccino}) together and assuming without lost of generality that $-t_n<C$ for all $n$ we have 
\begin{equation}\label{pingado}
    i-\dfrac{1}{2}\le V(\xi_n(t))\le i+\dfrac{1}{2},\ \forall t\in [-t_n,C], \forall n.
\end{equation}

From (\ref{mocaccino}) and (\ref{pingado}) we conclude that
\begin{equation*}
    i-\dfrac{1}{2}\le V(\xi(t))\le i+\dfrac{1}{2},\ \forall t\in (-\infty,C).
\end{equation*}

Therefore 
$$\xi_n(0)=\mathcal{T}(t_n)x_n\xrightarrow{n\to+\infty} \xi(0)\in W^u(x^*_i),$$
which contradicts (\ref{paodequeijo}). This concludes the proof.
\end{proof}

\begin{lemma}\label{felizz}
In the conditions and notations of Theorem \ref{lemma2.2.9}, assume that the neighborhoods $\mathcal{O}_{i+1},\dots,\mathcal{O}_p$ and the subbundles $\{U_{i+1},\dots,U_p\}$ are constructed.  Then, we can reduce the neighborhoods $\mathcal{O}_j$, $j\in\{i+1,\dots,p\}$, such that if $x\in D=V^{-1}(i+d)\cap W^s(x_i^*)\cap\mathcal{A}$ and $x\in\gamma^+(\mathcal{O}_j)$, where $j=j(x)$ is defined by
\begin{equation*}
    j(x):=\min\{i< k\le p: x\in \gamma^+(\mathcal{O}_k)\},
\end{equation*}
then $\mathcal{S}(x)+U_j(x)=X$.
\end{lemma}
\begin{proof}
Consider $d\in(0,1)$ and $ D=V^{-1}(i+d)\cap W^s(x^*_i)\cap\mathcal{A}$ as in Lemma \ref{lemma2.2.9}. Since $D$ is compact, we can fix $\epsilon_1>0$ such that
\begin{equation}\label{naosei}
   B_{\mathcal{A}}(D,\epsilon_1):=\{y\in\mathcal{A}:d(y,D)<\epsilon_1\}\subset V^{-1}(i+\frac{d}{2},+\infty). 
\end{equation}

Given $\epsilon'>0$ and $i<j\le p$ define the subset
$$\mathcal{H}_j:=\{x\in W^u(x_j^*): d(\gamma(x),x_k^*)\ge\epsilon', i<k<j\}\subset\mathcal{A}.$$

We claim that the subset
$$\mathcal{J}_j:=\mathcal{H}_j\cap V^{-1}\left[i+\dfrac{d}{2},j-\dfrac{d}{2}\right] $$
is compact. Indeed, let $\mathcal{J}_j\ni x_n\to x\in\mathcal{A}$. Take bounded global solutions $\xi_n$ such that $\xi_n(0)=x_n$. Define the global solutions $\psi_k(\cdot)=\xi_k(\cdot-k)$ for all $k\in\mathbb{N}$. It follows from Proposition \ref{subsequenciasolucoes}, with $\sigma_k=k$, $\xi_k(s)=\psi_k(s+k)$ for all $s\in\mathbb{R}$, that there exists a subsequence of $\xi_n$ and a bounded global solution $\xi$ such that
\begin{equation*}
    \xi_n(t)\xrightarrow{n\to+\infty}\xi(t),\ \forall t\in\mathbb{R}.
\end{equation*}

Note that $V(\xi(t))\le j$ for all $t\in\mathbb{R}$ and $V(x)\in\left[i+\dfrac{d}{2},j-\dfrac{d}{2}\right]$, that is,
\begin{equation*}
    \xi(t)\to x_k^*,\ \text{as}\ t\to-\infty,\ \text{for some}\ i<k\le j.
\end{equation*}

Since 
\begin{equation*}
    d(\xi_n(t),x_k^*)\ge\epsilon',\ \forall i<k<j,\ \forall n\in\mathbb{N},\ \forall t\in\mathbb{R}, 
\end{equation*}
we conclude that $\xi(t)\to x_j^*$ as $t\to-\infty$, and $d(\xi(t),x_k^*)\ge\epsilon'$, that is, $x\in\mathcal{H}_j$. Obviously $x\in V^{-1}\left[i+\dfrac{d}{2},j-\dfrac{d}{2}\right]$. This concludes our claim that $\mathcal{J}_j$ is compact.

Note that $U_j(x)=\mathcal{U}(x)$ for $x\in\mathcal{J}_j$. Hence, it follows from Lemma \ref{extensaos} that 
$$\mathcal{S}(x)+U_j(x)=X,\ \forall x\in\mathcal{J}_j.$$

From the compactness of $\mathcal{J}_j$, the continuity of $U_j$ in $\gamma^+(\mathcal{O}_j)$ and the lower semicontinuity of $\mathcal{S}$, we can Lemma \ref{lemma-2.2.8}  to obtain an $\epsilon_2>0$ such that 
\begin{equation}\label{contido0}
   y\in B_{\mathcal{A}}(\mathcal{J}_j,\epsilon_2):=\{y\in\mathcal{A}:d(y,\mathcal{J}_j)<\epsilon_2\}\Rightarrow \mathcal{S}(y)+U_j(y)=X,\ \forall j\in\{i+1,\dots,p\}.
\end{equation}

Now we are ready to prove our statement by induction. Define $\epsilon:=\min\{\epsilon_1,\epsilon_2\}$. We start our proof for the neighborhood $\mathcal{O}_{i+1}$.   From Lemma \ref{feliz} we can reduce $\mathcal{O}_{i+1}$ such that
\begin{equation*}
    x\in \mathcal{O}_{i+1}\ \text{and}\ \mathcal{T}(t)x\in W^s(x^*_i)\cap V^{-1}(i+d)\Rightarrow d(\mathcal{T}(t)x,W^u(x_{i+1}^*))<\dfrac{\epsilon}{2}.
\end{equation*}

    Hence, if $x\in D$ and $j(x)=i+1$, there exists $\tilde{x}\in W^u(x_{i+1}^*)$ with $d(x,\tilde{x})<\epsilon$. From  (\ref{naosei}) we know that $V(\tilde{x})>i+\dfrac{d}{2}$. Therefore $\tilde{x}\in \mathcal{J}_{i+1}$ and $x\in B(\mathcal{J}_j,\epsilon_2)$. It follows from (\ref{contido0}) that $\mathcal{S}(x)+U_{i+1}(x)=X$. This finishes the proof for $j=i+1$.

    Assume that for $i+1\le r<j$ it is valid that if $x\in\gamma^+(\mathcal{O}_r)\cap D$ and $x\notin\gamma^+(\mathcal{O}_k)$, $i<k<r$, then
    $$\mathcal{S}(x)+U_r(x)=X.$$

    Fix $\epsilon'>0$ such that 
    $$B_{\mathcal{A}}(x_k^*,\epsilon')\subset \mathcal{O}_k, i<k<j.$$
   With that $\epsilon'$, consider the set $\mathcal{J}_j$.
From Lemma \ref{feliz} we can reduce $\mathcal{O}_j$ such that if $x\in \mathcal{O}_j$, $$d(\mathcal{T}(t)x,x_k^*)\ge \epsilon',\ \forall t\in\mathbb{R}, i<k<j$$ and $\mathcal{T}(t)x\in D$, then $$d(\mathcal{T}(t)x,W^u(x_j^*))<\dfrac{\epsilon}{2}.$$

Thus, if $x\in D$ and $j(x)=j$, there exists $\tilde{x}\in W^u(x_j^*)$ such that $d(x,\tilde{x})<\epsilon$. Therefore $x\in B_{\mathcal{A}}(\mathcal{J}_j,\epsilon_2)$ and from \eqref{contido0} we have $\mathcal{S}(x)+U_j(x)=X$. This concludes the proof.
\end{proof}

With  Lemmas \ref{feliz} and \ref{felizz} we have finally finished the proof of Theorem \ref{lemma2.2.9}. To finish this subsection, we will improve the property stated in item \ref{item5} of Theorem \ref{lemma2.2.9}. From item \eqref{item5} of Theorem \ref{lemma2.2.9} we have
\begin{equation*}
    \mathcal{T}(s)x\in\mathcal{O}_i,\ \forall s\in[0,n]\Longrightarrow \parallel (D_x\mathcal{T}(n))v^s\parallel\le \tilde{C}^n\lambda_1^n,\ \forall v^s\in S(x),\ \forall x\in\mathcal{O}_i, \forall n\in\mathbb{N}. 
\end{equation*}

Note that $\tilde{C}^n\lambda_1^n$ does not (necessarily) converges to $0$ as $n\to+\infty$.  This convergence will be important to apply the Banach fixed point theorem and finish the proof of Lipschitz Shadowing in $\mathcal{A}$. In the finite dimensional setting (where the phase space is a finite dimensional compact manifold with no border or a finite dimensional vector space) this convergence is straightforward, since $\mathcal{T}(t)$ is a diffeomorphism and we can use the Lyapunov norm to assume  $\tilde{C}=1$ without lost of generality when (see \cite[Lemma 1.2.1]{Pilyugin}).

Hence, we have to manipulate our semigroup to obtain this convergence. To do that, we will work with the semigroups $\mathcal{T}_N:=\{\mathcal{T}(tN):t\ge0\}$, where $N\in\mathbb{N}$. In order to do that, we need to announce two technical lemmas that will allow us to prove Lipschitz Shadowing for the operator $\mathcal{T}(N)$, where $N\in\mathbb{N}$, instead of the operator $\mathcal{T}(1)$. We will not provide the proof of this results, that can be found in \cite[Lemmas 3.4.1 and 3.4.2]{Pilyugin}.

\begin{lemma}\label{lemaimportante}
Let $(M,d)$ be a metric space and $\mathcal{T}:M\to M$ be a  Lipschitz map with Lipschitz constant $L_1$. Assume that $\{x_n\}_{n\in\mathbb{Z}}\subset M$ is a  $d$-pseudo-orbit of $\mathcal{T}$ for some $d>0$. 
 Then, for any $N \in\mathbb{N}$ we have
 \begin{equation}\label{teste}
     d(\mathcal{T}^N(x_{Nk}),x_{(N+1)k})\le C_1d, \forall k\in\mathbb{Z},
 \end{equation}
 where $C_1=(1+L_1+\dots+L_1^{N-1})$.
 
 In particular, if we define  $z_n:=x_{nN}$ for all $n\in\mathbb{Z}$, then the sequence $\{z_n\}_{n\in\mathbb{Z}}$ is a 
$C_1d$-pseudo-orbit of the map $\phi:=\mathcal{T}^N$.
\end{lemma}

\begin{lemma}\label{auxiliar1}
Let $(M,d)$ be a metric space and $\mathcal{T}:M\to M$ be a  Lipschitz map with Lipschitz constant $L_1$. Assume that $\{x_n\}_{n\in\mathbb{Z}}$ is a $d$-pseudo-orbit of $\mathcal{T}$ for some $d>0$. Given $N\in\mathbb{N}$  define the map $\phi:=\mathcal{T}^N$ and consider the sequence $z_n:=x_{nN}$, for all $n\in\mathbb{Z}$.  Assume that there exists $L>1$ and $x\in M$ such that 
\begin{equation*}
    d(\phi^kx,z_k)\le Ld,\ \forall k\in\mathbb{Z}.
\end{equation*}

Then
\begin{equation*}
   d(\mathcal{T}^kx,x_k)\le L^*d,\ \forall k\in\mathbb{Z},
\end{equation*}
where $L^*=(1+L_1+\dots+L_1^N)L$ is independent of $d$. 
\end{lemma}

To finish this subsection, we now improve item \ref{item5} of Theorem \ref{lemma2.2.9}. 

\begin{remark}\label{precisadisso}
    From item \eqref{item5} of Theorem \ref{lemma2.2.9} we can assume without lost of generality   that there exists $\tilde{C}>0$ and $\lambda_0\in(0,1)$ such that if $\mathcal{T}(s)x\in \mathcal{O}_i$ for all $s\in[0,t]$ then
    \begin{equation}\label{1h30damanha}
        \parallel (D_x\mathcal{T}(t))v^s\parallel\le \tilde{C}\lambda_0^t\parallel v^s\parallel, \ \forall v^s\in S(x).
    \end{equation}
    
    Similarly, if $\mathcal{T}(s)x\in \mathcal{O}_i$ for all $s\in[-t,0]$ then
     \begin{equation}\label{ainda1h30}
        \parallel (D_x\mathcal{T}(-t))v^u\parallel\le \tilde{C}\lambda_0^t\parallel v^u\parallel, \ \forall v^u\in U(x).
    \end{equation}
    
    Indeed, fix $N\in\mathbb{N}$ such that $\lambda_0:=\tilde{C}\lambda_1^N<1$ and reduce the neighborhood $\mathcal{O}_i$ such that  inequalities \eqref{hipfinita} and \eqref{hipinfinita2} hold for $t\in[0,N]$.  Now we will work with the semigroup $\mathcal{T}_N:=\{\mathcal{T}(Nt):t\ge0\}$. Suppose that $x\in\mathcal{O}_i$ and $t>0$ satisfy $\mathcal{T}_N(s)x\in \mathcal{O}_i$ for all $s\in[0,t]$. Take $n\in\mathbb{N}$ and $r\in [0,1)$ such that $t=n+r$. Assuming without lost of generality that $\tilde{C}\ge 1$,  we have
    \begin{align*}
        \parallel (D_x\mathcal{T}_N(t))v^s\parallel&=\parallel (D_x\mathcal{T}_N(n+r))v^s\parallel=\parallel (D_{\mathcal{T}_N(n)x}\mathcal{T}_N(r))\circ (D_x\mathcal{T}_N(n)) v^s\parallel\\
        &\le \tilde{C}\lambda_1^{Nr}\parallel  (D_x\mathcal{T}_N(n)) v^s\parallel\le \tilde{C}\lambda_1^{Nr}\lambda_0^n\parallel v^s\parallel\\
        &=\tilde{C}^{1-r}\lambda_0^{n+r}\parallel v^s\parallel\le\tilde{C}\lambda_0^{t}\parallel v^s\parallel,\ \forall v^s\in S(x).
    \end{align*}
We can prove \eqref{ainda1h30} analogously.
    Hence, properties \ref{1h30damanha} and \ref{ainda1h30} are valid for the semigroup $\mathcal{T}_N$. Since the property of Lipschitz Shadowing holds for $\mathcal{T}_N(1)=\mathcal{T}(1)^N$ if, and only if holds for $\mathcal{T}(1)$ (Lemmas \ref{lemaimportante} and \ref{auxiliar1}), we can work with the semigroup $\mathcal{T}_N$ instead of $\mathcal{T}$. This finishes the proof.
\end{remark}

\subsection{The subbundles $S$ and $U$}\label{subsec33}

In this subsection we  conclude the proof of the Lipschitz Shadowing property for the map $\mathcal{T}(1)|_{\mathcal{A}}:\mathcal{A}\to\mathcal{A}$. We will use the compatible subbundles $S_i$ and $U_i$, constructed in Theorem \ref{lemma2.2.9}, to obtain a new  subbundles $S$ and $U$ (independent of $i$) on ${\mathcal{A}}$. The subbundles $S$ and $U$ are the key to connect the geometrical construction of the compatible subbundles with the fixed point theorem. In fact, after we prove some properties for the subbundles $S$ and $U$ we will be in  condition  to proceed as in the finite dimensional case and apply the Banach Fixed Point Theorem \cite[Theorem 1.3.1]{Pilyugin}. Therefore, it remains to construct and prove some properties of the subbundles $S$ and $U$ to finish the proof of Lipschitz Shadowing in $\mathcal{A}$. 

 Fix neighborhoods $\mathcal{O}_i\subset \mathcal{A}$ of $x^*_i$ in  and continuous subbundles $S_i,U_i$ on $\mathcal{O}_i$ with the properties stated in Theorem \ref{lemma2.2.9}. Define $W=\bigcup\limits_{i=i}^p \mathcal{O}_i$ and take  a Birkhoff number  $T>0$  related to $W$  (see Lemma \ref{birkhoff}). Hence for each $x\in\mathcal{A}$ there exists $t\in [0,T]$ such that $\mathcal{T}(t)x\in W$. Given $x\in\mathcal{A}$ take $t=t(x)\in[0,T]$ and $i\in\{1,\dots,p\}$ such that 
\begin{equation}\label{condition}
    \mathcal{T}(t)x\in \mathcal{O}_i\ \ \text{and}\ \ \{\mathcal{T}(s)x:s\in[0,t]\}\cap\mathcal{O}_j=\emptyset, \forall  j\in\{1,\dots,p\}, j\neq i.
\end{equation}

Define $S(x)=S_i(x)$ and $U(x)=U_i(x)$. Note that $S(x)\oplus U(x)=X$ for all $x\in\mathcal{A}$. Therefore we can define, using the notation introduced in Remark \ref{defprosepa}, the projections 
\begin{equation*}
P_s(x):=P_{S(x)U(x)}\ \text{and}\     P_u(x):=I-P_s(x)=P_{U(x)S(x)},\ \forall x\in\mathcal{A}.
\end{equation*}

In order to apply the Fixed Point Theorem, we have to  bound the norms of these projections in the attractor, that is, we must show that
\begin{equation}\label{boundofPs}
    \sup\limits_{x\in\mathcal{A}}(\parallel P_s(x)\parallel_{\mathcal{L}(X)}+\parallel P_u(x)\parallel_{\mathcal{L}(X)})<+\infty.
\end{equation}

In the proof of Lipschitz Shadowing in finite dimension compact manifolds, the boundness \eqref{boundofPs} follows from the continuity of the subbundles $S_i$ and $U_i$ in $\gamma^-(\mathcal{O}_i)$. Since this is not our case (the continuity only holds for the subbundles $U_i$), we have to show  \eqref{boundofPs} without using continuity. In fact, to prove \eqref{boundofPs} we will show that the the subspaces $S_i$ and $U_i$ are not ``leaning'' into each other. Therefore, the following results will use the concept of inclination (see Lemma \ref{mudandolemma227}).

 To give a geometric intuition about our next technical results, let $X=\mathbb{R}^2$, $S=\langle(1,0)\rangle$, $U=\langle(0,1)\rangle$ and $\mathcal{V}(n)=\langle(1,n)\rangle$ for all $n\in\mathbb{N}^*$, where $\langle v\rangle$ denotes the subspace generated by $v\in X$. It is clear that $S\oplus U=X$, $\mathcal{V}(n)\cap S=\emptyset$ and $\mathcal{V}(n)\cap U=\emptyset$ for all $n\in\mathbb{N}^*$. Therefore, we can compute the inclination of $\mathcal{V}(n)$ in relation to both decompositions $S\oplus U=U\oplus S=X$. Geometrically, note that $\mathcal{V}(n)$  ``approaches'' the subspace $U$ (Figure \ref{inclinacaoinfinito}). Given $v=(c,nc)\in\mathcal{V}(n)$ non null ($c\neq0$) we have $$v=v^s+v^u=(c,0)+(0,nc),$$ where $(c,0)\in S$ and $(0,nc)\in U$. Hence,
\begin{equation*}
    \dfrac{|v^s|}{|v^u|}=\dfrac{1}{n}\ \ \text{and}\ \ \dfrac{|v^u|}{|v^s|}=n,
\end{equation*}
which implies that the inclination of $\mathcal{V}(n)$ in relation to $S\oplus U=X$ goes to $0$ (the subspaces $\mathcal{V}(n)$ are leaning into $U$) and the inclination of $\mathcal{V}(n)$ in relation to $U\oplus S=X$ goes to infinity.

\begin{figure}[H]
    \centering
    \begin{tikzpicture}
    \draw[thick,-](0,-3)--(0,4)node[above]{$U$};
    \draw[thick,-](-3,0)--(4,0)node[right]{$S$};
    \draw[blue,thick,-](-2,-2)--(3,3)node[right]{$\mathcal{V}(1)$};
    \draw[blue,thick,-](-1.2,-2.4)--(1.9,3.8)node[right]{$\mathcal{V}(2)$};
        \draw[blue,thick,-](-0.85,-2.55)--(1.35,4.05);
        \draw[blue,thick,-](-0.65,-2.6)--(1.03,4.12);
        \draw[blue,thick,-](-0.26,-2.6)--(0.42,4.2);
        \node[blue] at (1.1,4.2)[above] {$\mathcal{V}(n)$};
\end{tikzpicture}
    \caption{Inclination of $\mathcal{V}(n)$}
    \label{inclinacaoinfinito}
\end{figure}
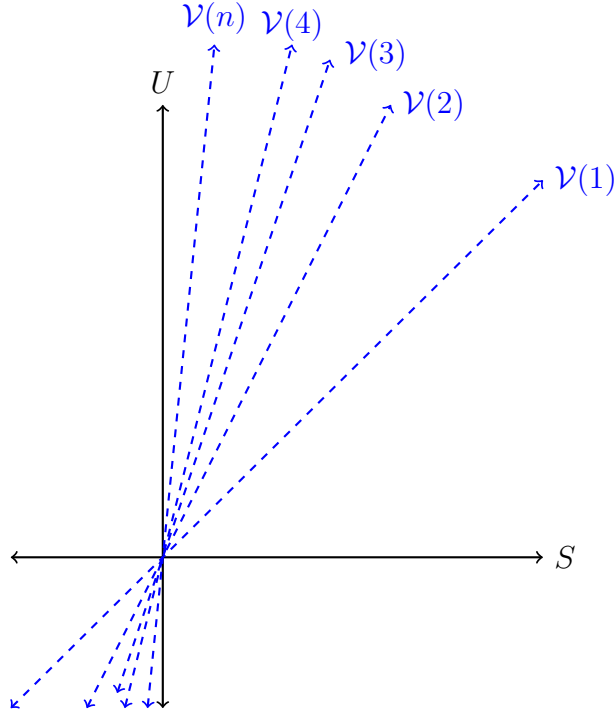
As we said before, the inclination of $\mathcal{V}(n)$ goes to infinity in relation to the decomposition $U\oplus S=X$ because $\mathcal{V}(n)$ was leaning into $U$. Note that if we have defined a continuous family $\mathcal{V}(t)=\langle (1,t)\rangle$ and let the parameter $t$ varies from $(0,+\infty)$ the same would happen. On the other hand, if the parameter $t$ varies in a compact subset $K\subset (0,+\infty)$, then the continuous subbundle $\{\mathcal{V}(t):t\in K\}$ does not approach $U$ and its inclination would not explode. This is the idea behind the next lemma.

\begin{lemma}\label{inclinacaoltda}
Let $(M,d)$ be a compact metric space, $X$ be a Hilbert space and $\{P(x)\subset X:x\in M\}$, $\{Q(x)\subset X:x\in M\}$, $\{Z(x)\subset X:x\in M\}$ be continuous subbundles such that
\begin{equation*}
    S(x)\oplus U(x)=X,\ \text{for all}\ x\in M,
\end{equation*}
and
\begin{equation*}
    S(x)\cap \mathcal{V}(x)=\{0\},\ \text{for all}\ x\in M,
\end{equation*}
where $S(x)=R(P(x))$, $U(x)=R(Q(x))$ and $\mathcal{V}(x)=R(Z(x))$. Given $v\in\mathcal{V}(x)$ there exists unique $v^s\in S(x)$ and $v^u\in U(x)$ such that
\begin{equation*}
    v=v^s+v^u
\end{equation*}
with $v^u\neq 0$. We recall  that the inclination (see Lemma \ref{mudandolemma227}) of a non-null vector $v\in\mathcal{V}(x)$ in relation to the direct sum  $S(x)\oplus U(x)=X$ is given by
\begin{equation*}
    \alpha(v)=\dfrac{\parallel v^s\parallel}{\parallel v^u\parallel}.
\end{equation*}

Now define the inclination of $\mathcal{V}(x)$ in relation to the decomposition $S(x)\oplus U(x)=X$ as
\begin{equation*}
    \alpha(\mathcal{V}(x)):=\sup\limits_{v\in\mathcal{V}(x)\atop v\neq 0}\alpha(v)=\sup\limits_{v\in\mathcal{V}(x)\atop \parallel v\parallel=1}\alpha(v).
\end{equation*}

If  $\mathcal{V}(x)$ is finite dimensional for all $x\in X$ (with same dimension for each $x\in M$), then there exists $M'>0$ such that
\begin{equation*}
   \sup\limits_{x\in M}\alpha(\mathcal{V}(x))\le M'.
\end{equation*}
\end{lemma}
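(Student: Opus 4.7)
The plan is to argue by contradiction. Suppose $\sup_{x\in M}\alpha(\mathcal{V}(x))=+\infty$; then there exist sequences $x_n\in M$ and $v_n\in\mathcal{V}(x_n)$ with $\|v_n\|=1$ such that $\alpha(v_n)\to+\infty$. Since $M$ is compact, after passing to a subsequence I may assume $x_n\to x_0$ for some $x_0\in M$.

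The first step is to control the two components of $v_n$ using Proposition \ref{projcontinuass}: since the families $\{S(x)\}_{x\in M}$ and $\{U(x)\}_{x\in M}$ are continuous and $S(x)\oplus U(x)=X$ on $M$, the projections $P_{SU}(x):=P_{S(x)U(x)}$ and $P_{US}(x):=P_{U(x)S(x)}=I-P_{SU}(x)$ are continuous in $\mathcal{L}(X)$, hence uniformly bounded on the compact set $M$ by some constant $C>0$. Writing $v_n^s:=P_{SU}(x_n)v_n$ and $v_n^u:=P_{US}(x_n)v_n$, both satisfy $\|v_n^s\|,\|v_n^u\|\le C$. Since $\alpha(v_n)=\|v_n^s\|/\|v_n^u\|\to+\infty$ while $\|v_n^s\|$ stays bounded, this forces $\|v_n^u\|\to 0$.

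The crucial step is to extract a convergent subsequence of $\{v_n\}$ in $X$, which is only possible because of the finite-dimensional hypothesis. In the first case, when $\mathcal{V}(x)$ has constant finite dimension, the bounded sequence $Z(x_0)v_n$ lives in the finite-dimensional space $\mathcal{V}(x_0)$, so admits a convergent subsequence. Since $\|v_n-Z(x_0)v_n\|=\|(Z(x_n)-Z(x_0))v_n\|\to 0$ by the continuity of $Z$, the corresponding subsequence of $v_n$ converges to some $w\in\mathcal{V}(x_0)$ with $\|w\|=1$; using $v_n^u\to 0$ together with the continuity of $P_{US}$ then yields $P_{US}(x_0)w=0$, so $w\in S(x_0)$. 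In the second case, when $S(x)$ has constant finite dimension, I instead apply the same idea to the bounded sequence $P_{SU}(x_0)v_n^s$ in the finite-dimensional space $S(x_0)$: extracting a convergent subsequence and using $\|v_n^s-P_{SU}(x_0)v_n^s\|\le C\,\|P_{SU}(x_n)-P_{SU}(x_0)\|\to 0$ gives $v_n^s\to w\in S(x_0)$. Combined with $v_n^u\to 0$ one obtains $v_n\to w$ with $\|w\|=1$, and the continuity of $Z$ forces $w\in\mathcal{V}(x_0)$.

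In either case I produce a nonzero $w\in S(x_0)\cap\mathcal{V}(x_0)$, contradicting the assumption $S(x_0)\cap\mathcal{V}(x_0)=\{0\}$. The main obstacle is precisely the extraction of this convergent subsequence: unit vectors in an infinite-dimensional Hilbert space need not have convergent subsequences, and the argument sidesteps this by reducing to a bounded sequence lying in a finite-dimensional slice (either $\mathcal{V}(x_0)$ or $S(x_0)$), which is made possible by the norm-continuity of all three families of projections on the compact space $M$.
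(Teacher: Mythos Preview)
Your proof is correct and follows essentially the same approach as the paper's: both argue by contradiction, use compactness of $M$ to pass to a subsequence $x_n\to x_0$, show that $\|v_n^u\|\to 0$, and then split into the two cases ($\mathcal{V}$ finite dimensional or $S$ finite dimensional) to extract a convergent subsequence producing a nonzero element of $S(x_0)\cap\mathcal{V}(x_0)$. Your write-up is in fact a bit more careful than the paper's in justifying why, in the case $\dim\mathcal{V}(x)<\infty$, the sequence $v_n\in\mathcal{V}(x_n)$ admits a subsequence converging to an element of $\mathcal{V}(x_0)$ (you pass through $Z(x_0)v_n$, whereas the paper asserts this step without detail); note only that Proposition~\ref{projcontinuass}, which you invoke at the outset, appears after this lemma in the paper---this is just a forward reference with no logical circularity, and the paper itself makes the same forward reference at the end of its proof.
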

\begin{proof}
Given $x\in M$, define  the continuous projection
\begin{align*}
    P_s(x):X=&S(x)\oplus U(x)\to S(x)\\
    &v=v^s+v^u\mapsto v^s
\end{align*}
Denoting $P_u(x)=I-P_s(x)$, we have 
\begin{equation*}
    \alpha(v)=\dfrac{\parallel P_s(x)v\parallel}{\parallel P_u(x)v\parallel}=\dfrac{\parallel (I-P_u(x))v\parallel}{\parallel P_u(x)v\parallel},\ \forall x\in M,\ \forall v\in \mathcal{V}(x), v\neq 0.
\end{equation*}
Note that for any $x\in M$ and $v\in\mathcal{V}(x)$ non nulle it holds
\begin{equation*}
    \alpha(v)=\dfrac{\parallel (I-P_u(x))v\parallel}{\parallel P_u(x)v\parallel}\le\dfrac{ \parallel v\parallel}{\parallel P_u(x)v\parallel}+1,
\end{equation*}
implying that
\begin{equation}\label{limitoumesmo}
    \alpha(\mathcal{V}(x))\le \sup\limits_{v\in\mathcal{V}(x)\atop \parallel v\parallel=1}\dfrac{1}{\parallel P_u(x)v\parallel}+1=\left(\inf\limits_{v\in\mathcal{V}(x)\atop \parallel v\parallel=1}\parallel P_u(x)v\parallel\right)^{-1}+1.
\end{equation}

It remains to show that there exists $\epsilon>0$ satisfying
\begin{equation*}
    \inf\limits_{v\in\mathcal{V}(x)\atop \parallel v\parallel=1}\parallel P_u(x)v\parallel>\epsilon,\ \forall x\in M.
\end{equation*}

Assume by contradiction that there exists $x_n\in M$ and $v_n\in \mathcal{V}(x_n)$, $\parallel v_n\parallel=1$ such that $P_u(x_n)v_n\to 0$ as $n\to+\infty$. We can assume, since $M$ is compact, that $x_n\xrightarrow{n\to+\infty} x\in M$ and consequently 
\begin{equation*}
    d(v_n,S(x))\to 0.
\end{equation*}

Since $\mathcal{V}(x)$  is finite dimensional we can assume without loss of generality that $v_n\to v$ as $n\to+\infty$, for some $v\in\mathcal{V}(x)$ with $\parallel v\parallel=1$. This implies that $v\in\mathcal{V}(x)\cap S(x)$ , that is, $v=0$. This contradicts $\parallel v\parallel=1$ and concludes the proof.
\end{proof} 

In order to provide a geometric intuition of our next technical result, first note that in Figure \ref{inclinacaoinfinito} we had 
 \begin{equation*}
     S\oplus U=X, U=S^{\perp}\ \text{and}\ U\oplus\mathcal{V}(n)=X\ \text{for all}\ n.
 \end{equation*}
 
Moreover, $\parallel P_{U\mathcal{V}(n)}\parallel_{\mathcal{L}(X)}\to +\infty$ and $\parallel P_{\mathcal{V}(n)U}\parallel_{\mathcal{L}(X)}\to +\infty$ as $n\to+\infty$, where we are using the notation from Remark \ref{defprosepa}. In fact, the norm of the projections grows as the subspaces are leaning into each other and  minimize when the subspaces are orthogonal (orthogonal projections have norm $1$). Therefore, our intuition leads us to the following: if we want to maintain the norm $\parallel P_{U\mathcal{V}(n)}\parallel_{\mathcal{L}(X)}$ of the projections bounded, we must control the inclination of the subspaces. T

\begin{proposition}\label{boundedsetprojections}
Let $X$ be a Hilbert space, $\Gamma$ be a non-empty set and $\{S(x):x\in\Gamma\}$ and  $\{U(x):x\in\Gamma\}$ be two families of closed subspaces of $X$ (not necessarily continuous) with
\begin{equation*}
S(x)\oplus U(x)=X,\ \forall x\in\Gamma.
\end{equation*}
Since $U^{\perp}(x)\oplus U(x) =X$ we can calculate the inclination  of $S(x)$ (see Lemma \ref{inclinacaoltda}) in relation to  $U(x)\oplus U^{\perp}(x)=X$.  Assume that there exists $M>0$ such that
\begin{equation*}
    \sup\limits_{x\in \Gamma}\alpha (S(x))\le M,
\end{equation*}
that is,
\begin{equation}\label{label}
   \sup\limits_{x\in \Gamma} \sup\limits_{v\in S(x)\atop v\neq 0}\dfrac{\parallel P_{UU^{\perp}}(x)v\parallel}{\parallel P_{U^{\perp}U}(x)v\parallel}\le M.
\end{equation}

Then, there exists $M'>0$ such that
\begin{equation*}
    \sup\limits_{x\in\Gamma}\{\parallel P_{SU}(x)\parallel_{\mathcal{L}(X)}+\parallel P_{US}(x)\parallel_{\mathcal{L}(X)}\}\le M',
\end{equation*}
where $P_{SU}(x):=P_{S(x)U(x)}$ and $P_{US}(x):=P_{U(x)S(x)}$ for all $x\in\Gamma$ (see the notation established in Remark \ref{defprosepa}).

\end{proposition}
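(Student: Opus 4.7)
The plan is to reduce the statement to a direct Pythagorean computation, exploiting the fact that the orthogonal decomposition $X = U(x) \oplus U(x)^\perp$ gives a canonical way to measure vectors against $U(x)$.

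First I would fix $x \in \Gamma$ and $w \in X$, and set $s := P_{SU}(x)w \in S(x)$ and $u := P_{US}(x)w \in U(x)$, so $w = s + u$. Decompose $s$ orthogonally as $s = s_U + s_{U^\perp}$, where $s_U = P_{UU^\perp}(x)s \in U(x)$ and $s_{U^\perp} = P_{U^\perp U}(x)s \in U(x)^\perp$. The key observation is that $u \in U(x)$, so the $U(x)^\perp$-component of $w$ coincides with the $U(x)^\perp$-component of $s$; applying $P_{U^\perp U}(x)$ to the identity $w = s + u$ gives
\begin{equation*}
P_{U^\perp U}(x)w = s_{U^\perp}.
\end{equation*}
In particular $\|s_{U^\perp}\| \leq \|w\|$ (since $P_{U^\perp U}(x)$ is an orthogonal projection).

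Next I would use the inclination hypothesis. If $s_{U^\perp} = 0$ then $s \in S(x) \cap U(x) = \{0\}$ and the bound is trivial; otherwise \eqref{label} applied to $s \in S(x)$ gives $\|s_U\| \leq M \|s_{U^\perp}\| \leq M\|w\|$. Since $s_U \perp s_{U^\perp}$, Pythagoras yields
\begin{equation*}
\|P_{SU}(x)w\|^2 = \|s\|^2 = \|s_U\|^2 + \|s_{U^\perp}\|^2 \leq (M^2 + 1)\|w\|^2,
\end{equation*}
so $\|P_{SU}(x)\|_{\mathcal{L}(X)} \leq \sqrt{M^2+1}$. Because $P_{US}(x) = I - P_{SU}(x)$, the triangle inequality gives $\|P_{US}(x)\|_{\mathcal{L}(X)} \leq 1 + \sqrt{M^2+1}$, and taking $M' := 1 + 2\sqrt{M^2+1}$ (independent of $x \in \Gamma$) finishes the proof.

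There is no real obstacle here: once one recognizes that the $U(x)^\perp$-component of $w$ is forced to agree with the $U(x)^\perp$-component of $P_{SU}(x)w$ (because the $U(x)$-part is absorbed into $P_{US}(x)w$), the inclination bound immediately controls the $U(x)$-component of $s$, and orthogonality of the decomposition $U(x) \oplus U(x)^\perp$ turns this into a norm bound. The only subtlety worth flagging is handling the degenerate case $s_{U^\perp} = 0$, which is precisely where $S(x) \cap U(x) = \{0\}$ is used.
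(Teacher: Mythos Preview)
Your proof is correct and takes a genuinely different, more elementary route than the paper. The paper invokes the algebraic identity $P_{SU}(x)=(I-P_{S(x)}P_{U(x)})^{-1}\circ P_{S(x)}\circ (I-P_{S(x)}P_{U(x)})$ established in Proposition~\ref{projcontinuass}, and then bounds $\|(I-P_{S(x)}P_{U(x)})^{-1}\|$ restricted to $S(x)$ by showing $\|(I-P_{S(x)}P_{U(x)})v\|\ge \frac{1}{(M+1)^2}$ for unit $v\in S(x)$, via an inner-product inequality of the form $\|P_{U^\perp U}P_{SS^\perp}v\|^2\le \|P_{SS^\perp}P_{U^\perp U}v\|$. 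By contrast, you bypass the operator-inversion machinery entirely: the single observation that $P_{U^\perp U}(x)$ annihilates the $U(x)$-summand of $w$, so that $s_{U^\perp}=P_{U^\perp U}(x)w$, lets you read off $\|s_{U^\perp}\|\le\|w\|$ immediately, and then Pythagoras plus the inclination bound give $\|P_{SU}(x)\|\le\sqrt{M^2+1}$. Your argument is shorter, avoids the dependence on the earlier formula \eqref{legaltambem}, and yields a sharper explicit constant; the paper's approach, on the other hand, illustrates how the representation of oblique projections via orthogonal ones can be leveraged, which is thematically consistent with its use elsewhere in the continuity arguments.
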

\begin{proof}
We just need to show
\begin{equation*}
    \sup\limits_{x\in\Gamma}\{\parallel P_{SU}(x)\parallel_{\mathcal{L}(X)}\}<+\infty
\end{equation*}
since $$\parallel P_{US}(x)\parallel_{\mathcal{L}(X)}=\parallel I-P_{SU}(x)\parallel_{\mathcal{L}(X)}\le 1+\parallel P_{SU}(x)\parallel_{\mathcal{L}(X)},\ \forall x\in\Gamma.$$

First, let us prove that
\begin{equation}\label{legaltambem}
     P_{SU}(x)=(I-P_{S(x)} P_{U(x)})^{-1}\circ P_{S(x)}\circ (I-P_{S(x)}P_{U(x)}),\ \forall x\in\Gamma.
\end{equation}

Note that the map $(I-P_{S(x)} P_{U(x)})^{-1}$ is well defined (Lemma \ref{lemalegalzao}). Moreover, we  can easily verify that equality \eqref{legaltambem} holds for vectors in $S(x)$ and in $U(x)$. Since $S(x)\oplus U(x)=X$, then equality \eqref{legaltambem} holds.

Hence, 
\begin{equation*}
    \parallel P_{SU}(x)\parallel_{\mathcal{L}(X)}\le 2\parallel (I-P_{S(x)} P_{U(x)})^{-1}\circ P_{S(x)}\parallel_{\mathcal{L}(X)}, \forall x\in\Gamma.
\end{equation*}

To estimate the term on the right side, we just need to find an $\epsilon>0$ satisfying
\begin{equation*}
    \parallel (I-P_{S(x)} P_{U(x)})v\parallel\ge\epsilon\parallel v\parallel, \forall v\in S(x), \forall x\in\Gamma,
\end{equation*}
that is,
\begin{equation}\label{granfinale}
    \parallel (I-P_{S(x)} P_{U(x)})v\parallel\ge\epsilon,\ \forall v\in S(x), \parallel v\parallel=1, \forall x\in\Gamma.
\end{equation}

 From \eqref{label} we have
\begin{equation*}
    \parallel P_{U^{\perp}U}(x)v\parallel\ge \dfrac{1}{M}\parallel P_{UU^{\perp}}(x)v\parallel\ge \dfrac{1}{M}\parallel v\parallel -\dfrac{1}{M}\parallel P_{U^{\perp}U}(x)v\parallel,\ \forall v\in S(x),\ \forall x\in\Gamma.
\end{equation*}

Thus
\begin{equation}\label{minigranfinale}
    \parallel P_{U^{\perp}U}(x)v\parallel\ge\dfrac{1}{M+1}\parallel v\parallel, \forall v\in S(x),\ \forall x\in\Gamma.
\end{equation}

For $v\in S(x)$, $\parallel v\parallel=1$ we have
\begin{align*}
    \parallel P_{U^{\perp}U}(x)P_{SS^{\perp}}(x)v \parallel^2&=\langle P_{U^{\perp}U}(x)P_{SS^{\perp}}(x)v,P_{U^{\perp}U}(x)P_{SS^{\perp}}(x)v\rangle=\langle P_{SS^{\perp}}(x)v,P_{U^{\perp}U}(x)v\rangle\\
    &=\langle v,P_{SS^{\perp}}(x)P_{U^{\perp}U}(x)v\rangle\le \parallel P_{SS^{\perp}}(x)P_{U^{\perp}U}(x)v\parallel.
\end{align*}

Finally, putting the last inequality together with  \eqref{minigranfinale} we guarantee that for any $v\in S(x)$ with $\parallel v\parallel=1$ we have
\begin{align*}
    \parallel (I-P_{SS^{\perp}}(x) P_{UU^{\perp}}(x))v\parallel&=\parallel P_{SS^{\perp}}(x)(I-P_{UU^{\perp}}(x))v\parallel=\parallel P_{SS^{\perp}}(x)P_{U^{\perp}U}(x))v\parallel\\
    &\ge \parallel P_{U^{\perp}U}(x)P_{SS^{\perp}}(x)v \parallel^2\ge\dfrac{1}{(M+1)^2}.
\end{align*}

This proves \eqref{granfinale} and concludes the proof.
\end{proof}

With Lemma \ref{inclinacaoltda} and Proposition \ref{boundedsetprojections} we are ready to prove inequality \eqref{boundofPs}. Despite that, we will first prove one last technical lemma.

The following  result has a similar proof to the one found in \cite[Lemma 2.2.10]{Pilyugin}.  Despite that, we decided to show the proof since we need to adapt the result to our definition of continuity.
\begin{lemma}\label{lemma2.2.10}
Consider the subbundles $S$ and $U$ defined in the beginning of this subsection and take $x\in\mathcal{A}$. If $y=\mathcal{T}(t)x$, the following statement holds:
\begin{enumerate}
    \item If $x\in\mathcal{A}$, then $(D_x\mathcal{T}(t))S(x)\subset S(y), \ \forall t\ge 0\ \text{and}\  (D_x\mathcal{T}(t))U(x)\subset U(y),\ \forall t\le 0.$
    \item If $y_n\to y$, then there exists $n_0>0$ and isomorphisms $\Pi_n:X\to X$ for $n\ge n_0$, such that $\parallel \Pi_n-I\parallel_{\mathcal{L}(X)}\to 0$  and
    \begin{align*}
        &\Pi_n((D_x\mathcal{T}(t))S(x))\subset S(y_n), \forall t\ge T,\ \forall n\ge n_0,\\
        &\Pi_n((D_x\mathcal{T}(t))U(x))\subset U(y_n), \forall t\le -T,\ \forall n\ge n_0,
    \end{align*}
\end{enumerate}
where $T$ is the Birkhoff number of $W$ given in the construction of $S$ and $U$.
\end{lemma}
\begin{proof}
Item (1) follows straightforward from Theorem \ref{lemma2.2.9}.
We will  prove item (2) for $S$ and the proof for $U$ will follow similarly. Suppose that $y_n\to y$, where $y=\T(t)x$, with $t\ge T$. We know from the construction of $S$ that there exists $t_0\in[0,T]$ and $i\in\{1,\dots,p\}$ such that $x'=\T(t_0)x$ satisfies (\ref{condition}). Define $t'=t-t_0\ge 0$ and note that 
    \begin{equation*}
        x_n':=\T(-t')y_n\to x'.
    \end{equation*}
      
Note that $x'\in \mathcal{O}_i$ and therefore we will assume  that $x_n'\in \mathcal{O}_i$ for all $n$. From the continuity of the subbundle $S_i$ in $\mathcal{O}_i$ we know that
$$S_i(x_n')\to S_i(x'),\ \text{as}\ n\to\infty.$$ 
    
We will assume without loss of generality that $\mathcal{T}(t)x\notin \mathcal{O}_i$ (the other case is more simple). Then,  we have
\begin{align*}
    (D_x\mathcal{T}(t))S(x)&=(D_x\mathcal{T}(t))S_i(x)= S_i(\mathcal{T}(t)x)=S_i(\mathcal{T}(t')\mathcal{T}(t-t')x)\\
    &=S_i(\mathcal{T}(t')x')=(D_{x'}\mathcal{T}(t'))S_i(x')=\lim\limits_{n\to\infty}(D_{x_n'}\mathcal{T}(t'))S_i(x_n')\\
    &=\lim\limits_{n\to\infty}(D_{x_n'}\mathcal{T}(t'))S(x_n').
\end{align*}

Denoting by $Q(x)$ and $Q(x'_n)$ the orthogonal projections onto $(D_x\mathcal{T}(t))S(x)$ and $(D_{x_n'}\T(t'))S(x_n')$ respectively, define 
$$\Pi_n=I-Q(x)+Q(x'_n)Q(x).$$
Note that $\parallel \Pi_n-I\parallel_{\mathcal{L}(X)}\to 0$ as $n\to+\infty$. Moreover 
$$\Pi_n((D_x\mathcal{T}(t))S(x))=\Pi_n(R(Q(x)))= R(Q(x'_n)Q(x))\subset R(Q(x'_n))\subset S(y_n),\ \forall n\in\mathbb{N}.$$ 
This concludes the proof.
\end{proof}

The following result is a consequence of Lemma \ref{lemma2.2.10} and the compactness of $\mathcal{A}$.
\begin{corollary}\label{lemmaA.0.10}
Let $T$ be the Birkhoff number of $W$ given in the construction of $S$ and $U$. For any $\nu>0$ and $t>T$ there exists $\epsilon>0$ such that if $x,p\in\mathcal{A}$ and $y=\T(t)x$, $q=\T(t)p$  satisfies
\begin{equation*}
    d(x,p)<\epsilon\ \ \ \text{and}\ \ \ d(y,q)<\epsilon
\end{equation*}
then there exist isomorphisms $F(x,q):X\to X$ and $G(x,q):X\to X$ such that
\begin{equation*}
    \parallel F(x,q)-I\parallel_{\mathcal{L}(X)}\le\nu\ \ \text{and}\ \ \parallel G(x,q)-I\parallel_{\mathcal{L}(X)}\le\nu
\end{equation*}
and
\begin{equation*}
       F(x,q)((D_x\mathcal{T}(t))S(x))\subset S(q)\ \ \ \text{and}\ \ \ G(x,q)((D_y\T(-t))U(y))\subset U(p).
\end{equation*}
\end{corollary}

Now we prove the last properties of the subbundles $S$ and $U$, such as the boundness \eqref{boundofPs}, to conclude the proof of Lipschitz Shadowing in $\mathcal{A}$. We recall that once we have all properties of the subbundles $S$ and $U$ proved, we can proceed exactly like the finite dimensional case to apply the Banach Fixed Point Theorem and obtain the Lipschitz Shadowing property.

For each $i\in\{1,\dots,p\}$, fix an open set $\mathcal{O}_i'\subset \mathcal{O}_i$ (where $\mathcal{O}_i$ are the neighborhoods introduced in Theorem \ref{lemma2.2.9}) to satisfy the property established in Lemma \ref{sainaoentra}. Let $T'>0$ be a Birkhoff number (Lemma \ref{birkhoff}) of the set $V'=\bigcup\limits_{i=1}^n\mathcal{O}_i'$.

Remembering that we are assuming item (H1) from Theorem \ref{teointroducao}, fix $C_0,C_1>0$ such that
\begin{equation}\label{exponentialboundedness}
    \parallel D_x\mathcal{T}(t)\parallel_{\mathcal{L}(X)}\le C_1e^{C_0t},\ \forall t\in[0,T'],\ \forall x\in\mathcal{A}.
\end{equation}

We recall that condition \eqref{exponentialboundedness} is often satisfied in the infinite dimensional setting. In fact, if the semigroup is defined through the variation of constants formula \cite{carvalho2012attractors} and $\mathcal{T}(t)\in\mathcal{C}^1(X)$ for $t\ge0$, then its derivative is also written through the variation of constants formula \cite[Theorem 6.33]{carvalho2012attractors} and  we can use Gronwall's inequality to obtain \eqref{exponentialboundedness}.

\begin{lemma}\label{lemma2.2.11}
The subbundles $S,U$ satisfy:
\begin{enumerate}
    \item There exists $M>0$ such that
    $$\sup\limits_{x\in\mathcal{A}}(\parallel P_s(x)\parallel_{\mathcal{L}(X)}+\parallel P_u(x)\parallel_{\mathcal{L}(X)})\le M,$$
    where $P_s$ and $P_u$ are the projections defined at the beginning of this subsection.
    \item There exists $C>0$ such that for any $x\in\mathcal{A}$ it holds
    \begin{equation}\label{montante1}
           \parallel (D_x\mathcal{T}(t))v^s\parallel\le Ce^{-\lambda_1t}\parallel v^s\parallel,\ \forall v^s\in S(x),  \ \forall t\ge 0
    \end{equation}
    and
    \begin{equation}\label{montante2}
         \parallel (D_x\mathcal{T}(-t))v^u\parallel\le Ce^{-\lambda_1t}\parallel v^u\parallel,\ \forall v^u\in U(x),  \ \forall t\ge 0,
    \end{equation}
         where $\lambda_1$ is the constant in Theorem \ref{lemma2.2.9}.
\end{enumerate}
\end{lemma}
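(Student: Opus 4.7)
The plan has two parts, mirroring the two items of the lemma, and both rely on converting the local/piecewise data from Theorem \ref{lemma2.2.9} into uniform control on all of $\mathcal{A}$ via Birkhoff's recurrence and the Lyapunov structure.

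\medskip
\noindent\textbf{Part (1): Uniform bound on the projections.} I would first bound the projections on each compact piece $\overline{\mathcal{O}_i}$. By Theorem \ref{lemma2.2.9} the families $\{S_i(x)\}_{x\in\overline{\mathcal{O}_i}}$ and $\{U_i(x)\}_{x\in\overline{\mathcal{O}_i}}$ are continuous, $S_i(x)\oplus U_i(x)=X$, and $\dim U_i(x)<+\infty$. Apply Lemma \ref{inclinacaoltda} with $\mathcal{V}=S_i$ and the orthogonal decomposition $U_i\oplus U_i^{\perp}=X$ (which is legal because $U_i$ is finite dimensional, $S_i\cap U_i=\{0\}$, and $\overline{\mathcal{O}_i}$ is compact) to obtain a uniform bound on the inclination $\alpha(S_i(x))$ with respect to $U_i(x)\oplus U_i(x)^{\perp}=X$. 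Proposition \ref{boundedsetprojections} then gives constants $M_i$ with $\|P_{S_iU_i}(x)\|_{\mathcal{L}(X)}+\|P_{U_iS_i}(x)\|_{\mathcal{L}(X)}\le M_i$ for $x\in\overline{\mathcal{O}_i}$. Put $M_0=\max_i M_i$.

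\medskip
\noindent\textbf{Part (1), transfer to $\mathcal{A}$.} Given $x\in\mathcal{A}$, the Birkhoff constant $T'$ of $V'=\bigcup_i\mathcal{O}_i'$ furnishes $t(x)\in[0,T']$ with $y:=\mathcal{T}(t(x))x\in V'\subset \bigcup_i\overline{\mathcal{O}_i}$, so $\|P_s(y)\|,\|P_u(y)\|\le M_0$. From Theorem \ref{lemma2.2.9} and the construction of $S,U$ after Remark \ref{precisadisso}, one has $(D_x\mathcal{T}(t(x)))S(x)\subset S(y)$ and $(D_x\mathcal{T}(t(x)))U(x)=U(y)$, which implies the intertwining $(D_x\mathcal{T}(t(x)))P_s(x)=P_s(y)(D_x\mathcal{T}(t(x)))$. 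Since $P_s(y)(D_x\mathcal{T}(t(x)))v\in (D_x\mathcal{T}(t(x)))S(x)\subset R(D_x\mathcal{T}(t(x)))$, the inverse $D_y\mathcal{T}(-t(x))$ is well defined on it and I would conclude
\begin{equation*}
P_s(x)=D_y\mathcal{T}(-t(x))\,P_s(y)\,D_x\mathcal{T}(t(x)).
\end{equation*}
Combining (H1), which gives $\|D_x\mathcal{T}(t(x))\|_{\mathcal{L}(X)}\le C_1 e^{C_0T'}$, the bound $\|P_s(y)\|\le M_0$, and a uniform bound $K$ on $\|D_y\mathcal{T}(-t)|_{R(D_{\mathcal{T}(-t)y}\mathcal{T}(t))}\|$ over the compact set $\mathcal{A}\times[0,T']$ (obtained from Corollary \ref{normainversaconv}, injectivity of $D_\cdot\mathcal{T}(\cdot)$ on the attractor, and the continuity afforded by (H1)--(H2) together with Lemma \ref{lemaajudacoro}), yields $\|P_s(x)\|\le KM_0C_1e^{C_0T'}$. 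The bound on $P_u$ follows the same scheme.

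\medskip
\noindent\textbf{Part (2): Exponential decay.} For $x\in\mathcal{A}$ and $t>0$, decompose $[0,t]$ into the disjoint union of the \emph{inside intervals} $I_x(t)=\{s\in[0,t]:\mathcal{T}(s)x\in\bigcup_j\mathcal{O}_j\}$ and the \emph{transit intervals} $O_x(t)=[0,t]\setminus I_x(t)$. By Lemma \ref{lemma-Lyapunov-function} the Lyapunov function $V$ is strictly decreasing along non-constant orbits, and by the choice of $\mathcal{O}_i'$ via Lemma \ref{sainaoentra} the orbit visits each $\mathcal{O}_i$ in at most one contiguous interval, so the forward orbit crosses at most $p$ neighborhoods and $|O_x(t)|\le pT'$. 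Inside $\mathcal{O}_i$, Remark \ref{precisadisso} gives $\|(D_z\mathcal{T}(\tau))v^s\|\le\tilde{C}\lambda_0^{\tau}\|v^s\|$ for $v^s\in S(z)$; outside, (H1) gives $\|(D_z\mathcal{T}(\tau))\|\le C_1e^{C_0\tau}$. Concatenating along the (at most $2p$) alternating pieces, using the forward invariance $(D_x\mathcal{T}(s))S(x)\subset S(\mathcal{T}(s)x)$ at each step, I obtain
\begin{equation*}
\|(D_x\mathcal{T}(t))v^s\|\le \tilde{C}^p C_1^p e^{C_0pT'}\lambda_0^{t-pT'}\|v^s\|=Ce^{-\lambda_1 t}\|v^s\|,
\end{equation*}
with $\lambda_1=-\log\lambda_0>0$ and $C$ absorbing the remaining constants. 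Estimate \eqref{montante2} is proved identically using backward iterates and the analogous backward estimate in Remark \ref{precisadisso}, together with $(D_x\mathcal{T}(t))U(x)=U(\mathcal{T}(t)x)$.

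\medskip
\noindent\textbf{Main obstacle.} The delicate step is the transfer in Part (1): unlike the finite-dimensional situation, $D_x\mathcal{T}(t)$ is only injective (not surjective) on $\mathcal{A}$, so the uniform bound on the inverse $\|D_y\mathcal{T}(-t)|_{R(\cdot)}\|$ over the compact set $\mathcal{A}\times[0,T']$ is not automatic, and must be extracted from Corollary \ref{normainversaconv} combined with the strong continuity provided by (H2) and the equicontinuity provided by (H1). A secondary subtlety in Part (2) is verifying that the constant $\tilde{C}^p C_1^p e^{C_0 pT'}$ really is independent of $t$, which is precisely why the ``sai não entra'' property of the $\mathcal{O}_i'$ (forbidding re-entry) and the strict monotonicity of $V$ are essential.
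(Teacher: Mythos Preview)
Your Part~(2) is essentially the paper's argument: split $[0,t]$ into the time spent inside $\bigcup_i\mathcal{O}_i$ versus outside, bound the total transit time by $pT'$ using the Lyapunov ordering and the Birkhoff constant, then concatenate the contraction estimates from Remark~\ref{precisadisso} with the growth bound (H1). The paper does exactly this (with an $\epsilon$-cover and $\epsilon\to 0$, which is a minor cosmetic difference).

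Your Part~(1), however, takes a different route from the paper and contains a real gap. You want the intertwining $P_s(x)=D_y\mathcal{T}(-t(x))\,P_s(y)\,D_x\mathcal{T}(t(x))$ and then a uniform bound on $\|D_y\mathcal{T}(-t)\|_{\mathcal{L}(R(D_{\mathcal{T}(-t)y}\mathcal{T}(t)),X)}$ over $\mathcal{A}\times[0,T']$. The problem is that (H2) gives only \emph{strong} continuity of $t\mapsto D_x\mathcal{T}(t)$, not norm continuity, so Corollary~\ref{normainversaconv} (which requires $T_n\to T$ in $\mathcal{L}(X)$) does not apply in the $t$-variable, and nothing in the hypotheses prevents $\|(D_x\mathcal{T}(t))^{-1}\|$ from blowing up along a sequence $t_n\in[0,T']$. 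You correctly flag this as the main obstacle, but the extraction you sketch does not go through with the stated hypotheses.

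The paper avoids this entirely: instead of pulling $P_s$ back through the (possibly unbounded) full inverse, it estimates the \emph{inclination} of $S_i(\mathcal{T}(-t)x)$ with respect to $U_i\oplus U_i^{\perp}$ directly, and the key observation is that this backward inclination is controlled by the \emph{forward} derivative norm $\|D_{\mathcal{T}(-t)x}\mathcal{T}(t)\|$ (because distances to $U_i$ can only shrink under $D\mathcal{T}(t)$ by at most this factor). So only the bound (H1), $\|D_x\mathcal{T}(t)\|\le C_1e^{C_0 T}$, is needed, and then Proposition~\ref{boundedsetprojections} converts the uniform inclination bound into a uniform projection bound. If you want to salvage your intertwining approach, work with $P_u$ rather than $P_s$: since $U(y)$ is finite-dimensional, the inverse $(D_x\mathcal{T}(t))^{-1}|_{U(y)}$ \emph{can} be uniformly bounded (this is exactly what the paper does for \eqref{montante2}), and then $\|P_s(x)\|\le 1+\|P_u(x)\|$.
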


\begin{proof}
Let us separate the proof by items.
\begin{enumerate}
    \item  
    
    It is sufficient to estimate the norm of $P_s$ in $\mathcal{A}$, since $P_u(x)=I-P_s(x)$ for all $x\in\mathcal{A}$. From the construction of $S$ we just need to estimate the norm of 
    the operators $P_{S_i}(x):=P_{S_i(x)U_i(x)}$ (in the notation established in Remark \ref{defprosepa}) for $x$ in
    \begin{equation*}
        \gamma_{[0,T]}^-(\mathcal{O}_i):=\{\mathcal{T}(-t)y:t\in [0,T], y\in\mathcal{O}_i\}.
    \end{equation*}
    
From Lemma \ref{inclinacaoltda} we know that  the inclination  of $S_i(x)$ related to the direct sum $U_i(x)\oplus U_i^{\perp}(x)=X$ is bounded in $\overline{\mathcal{O}_i}$, that is, there exists $M>0$ such that
 \begin{equation*}
    \sup\limits_{x\in \overline{\mathcal{O}_i}}\alpha(S_i(x))=\sup\limits_{x\in \overline{\mathcal{O}_i}}\sup\limits_{v\in S_i(x)\atop\parallel v\parallel=1}\dfrac{\parallel P_{U_iU_i^{\perp}}(x)v\parallel}{\parallel P_{U_i^{\perp}U_i}(x)v\parallel}\le M.
\end{equation*}

Let us estimate the inclination of $S_i(y)$ for $y\in \gamma_{[0,T]}^-(\mathcal{O}_i)$. Fix $x\in \mathcal{O}_i$, $v\in S_i(x)\cap R(D_{\mathcal{T}(-t)x}\mathcal{T}(t))$, $t\in[0,T]$ and define $x'=\mathcal{T}(-t)x$ and $v'=(D_x\mathcal{T}(-t))v\in S_i(x')$.  Note that
\begin{align*}
    \parallel P_{U_iU_i^{\perp}}(x)v-v\parallel&= d(v,U_i(x))\\
    &\le\inf\limits_{u\in U_i(x)\cap R(D_{\mathcal{T}(-t)x}\mathcal{T}(t))}\parallel  D_{\mathcal{T}(-t)x}\mathcal{T}(t)(v'-(D_x\mathcal{T}(-t))u) \parallel\\
    &\le \parallel D_{\mathcal{T}(-t)x}\mathcal{T}(t)\parallel_{\mathcal{L}(X)} \inf\limits_{u\in U_i(x')}\parallel v'-u \parallel\\
    &=\parallel D_{\mathcal{T}(-t)x}\mathcal{T}(t)\parallel_{\mathcal{L}(X)}\parallel v'-P_{U_iU_i^{\perp}}(x')v'\parallel.
\end{align*}

Hence
\begin{equation*}
    \dfrac{1}{ \parallel P_{U_i^{\perp}U_i}(x')v'\parallel}\le  \parallel D_{\mathcal{T}(-t)x}\mathcal{T}(t)\parallel_{\mathcal{L}(X)} \dfrac{1}{\parallel P_{U_i^{\perp}U_i}(x)v\parallel}.
\end{equation*}

Therefore for $v'\in S_i(x')$, $\parallel v'\parallel =1$,  we have
\begin{equation*}
    \dfrac{\parallel P_{U_iU_i^{\perp}}(x')v'\parallel}{\parallel P_{U_i^{\perp}U_i}(x')v'\parallel}\le \parallel D_{\mathcal{T}(-t)x}\mathcal{T}(t)\parallel_{\mathcal{L}(X)} \dfrac{1}{\parallel P_{U_i^{\perp}U_i}(x')v\parallel}\le M'\dfrac{1}{\parallel P_{U_i^{\perp}U_i}(x)v\parallel},
\end{equation*}
where $M'$ is obtained through \eqref{exponentialboundedness}. 
Now we can proceed as in the proof of Lemma \ref{inclinacaoltda}  to show that 
\begin{equation*}
    \inf\limits_{x\in \overline{\mathcal{O}_i}\atop v\in S_i(x),\parallel v\parallel=1}\parallel P_{U_i^{\perp}U_i}(x)v\parallel>0.
\end{equation*}

Therefore the inclination of $S_i(x)$ in relation to the direct sum $U_i(x)\oplus U^{\perp}_i(x)=X$ is bounded for $x\in\gamma^-_{[0,T]}(\mathcal{O}_i)$. Now we can use Proposition \ref{boundedsetprojections} to conclude the proof.

\item With \eqref{exponentialboundedness} we can proceed as in the finite dimensional case \cite[Chapter 2]{Pilyugin} to show \eqref{montante1}. Inequality \eqref{montante2} follows similarly. In fact, we can use Corollary \ref{normainversaconv} together with the fact that $D_x\mathcal{T}(t)$ is an isomorphism onto its range and the subbundles $U_i$ are continuous in $\gamma({\mathcal{O}_i})$ (since they are finite dimensional) to also bound the norm of $(D_{x}\mathcal{T}(-t))|_{U(x)}$  uniformly for $t\in[0,T']$  and $x\in\mathcal{A}$. Hence, we can assume without loss of generality that $C_0$ and $C_1$ also satisfies
\begin{equation*}
    \parallel (D_x\mathcal{T}(-t))|_{U(x)}\parallel_{\mathcal{L}(U(x),X)}\le C_1e^{C_0|t|},\ \forall t\in[0,T'],\ \forall x\in\mathcal{A}
\end{equation*}
and proceed exactly as in the proof of \eqref{montante1}.
\end{enumerate}
\end{proof}

We finally announce the main result of this Section, that is item (1) of Theorem \ref{teointroducao}.

\begin{theorem}[Lipschitz Shadowing]\label{shadowing}

Let $\mathcal{T}=\{\mathcal{T}(t):t\ge0\}$ be a Morse-Smale semigroup with non-wandering set $\Omega=\{x^*_1,\dots,x^*_p\}$. Assume that $\mathcal{T}$ satisfies conditions (H1) and (H2). Then, the map $\mathcal{T}(1)|_{\mathcal{A}}:\mathcal{A}\to\mathcal{A}$ has the Lipschitz Shadowing property.
\end{theorem}
	\begin{proof}
	     With   Lemma \ref{lemma2.2.11} we can apply  the Banach Fixed Point Theorem \cite[Theorem 1.3.1]{Pilyugin}, with no obstacles in the infinite dimensional context,  and proceed as in the finite dimensional case \cite{Pilyugin,santamaria2014distance}. To preserve the completeness of the manuscript, we  provide an  sketch of the proof.

Since the proof of the theorem is very technical, we first discuss about the key ideas and intuition behind it. We want to find $d_0>0$ and $L>0$ such that for any  $d-$pseudo orbit $\{x_k\}_{k\in\mathbb{Z}}$ of $\mathcal{T}(1)|_{\mathcal{A}}$, $d\le d_0$,  there exists an orbit of $\mathcal{T}(1)$ that $Ld-$shadows $\{x_k\}_{k\in\mathbb{Z}}$. Hence, given a $d-$pseudo-orbit $\{x_k\}_{k\in\mathbb{Z}}$, $d\le d_0$, we must find a sequence of vectors $\{v_k\}_{k\in\mathbb{N}}\subset\mathcal{A}$ satisfying
\begin{equation*}
    \mathcal{T}(1)(x_k+v_k)=x_{k+1}+v_{k+1}\ \ \ \text{and}\ \ \ \parallel v_k\parallel\le Ld,\ \forall k\in\mathbb{N}.
\end{equation*}

In this case, the sequence $\{y_k:=x_k+v_k\}_{k\in\mathbb{Z}}$ is an orbit of $\mathcal{T}(1)|_{\mathcal{A}}$. The sequence $\{v_k\}_{k\in\mathbb{N}}$ will be obtained through the Banach fixed point theorem. To be more precise, consider the Banach space $(X^{\mathbb{Z}},\parallel\cdot\parallel_{X^{\mathbb{Z}}})$  given by
    \begin{equation*}
 X^{\mathbb{Z}}:=\{\{v_k\}_{k\in\mathbb{Z}}\subset X:  \sup\limits_{k\in\mathbb{Z}}\parallel v_k\parallel<+\infty\}\ \ \text{and}\ \ \ \parallel \{v_k\}_{k\in\mathbb{Z}}\parallel_{X^{\mathbb{Z}}}:=\sup\limits_{k\in\mathbb{Z}}\parallel v_k\parallel.
    \end{equation*}

We just have to show that the following map
\begin{align*}
    \Phi:&B_{X^{\mathbb{Z}}}[0,Ld]\to B_{X^{\mathbb{Z}}}[0,Ld]\\
    &\{v_k\}_{k\in\mathbb{Z}}\mapsto \{\mathcal{T}(1)(x_{k-1}+v_{k-1})-x_k\}_{k\in\mathbb{Z}},
\end{align*}
is a contraction, where $B_{X^{\mathbb{Z}}}[0,Ld]$ denotes the closed ball in $X^{\mathbb{Z}}$ centered in $0$ with radius $R$.

Building on the intuition provided above, we now present a sketch of the proof. We will use the properties of the subbundles $S$ and $U$, stated in Lemma \ref{lemma2.2.11}, to satisfy the assumptions of Theorem \ref{teoremaabstrato}  (see Appendix) and obtain an orbit of $\mathcal{T}(1)|_{\mathcal{A}}$ close to a pseudo-orbit via Banach fixed point theorem. Let us fix come constants that are  fundamental to achieve the hypothesis from Theorem \ref{teoremaabstrato}. 

Fix $\mu\in (0,1)$ arbitrary and let $T$ be the Bihrkhoff number mentioned in Corollary \ref{lemmaA.0.10}. From Lemma \ref{lemma2.2.11} we may take an integer $N$, with $N>T$, such that
\begin{equation}\label{equacaomu}
\parallel (D_x\mathcal{T}(n))\ v^s\parallel\le\mu\parallel v^s\parallel,\ \forall v^s\in S(x),\ \forall n\ge N-1,
\end{equation}
\begin{equation*}
\parallel (D_x\mathcal{T}(-n))\ v^u\parallel\le\mu\parallel v^u\parallel,\ \forall v^u\in U(x),\ \forall n\ge N-1.
\end{equation*}

Let $M>0$ be the constant given in Lemma \ref{lemma2.2.11} satisfying
\begin{equation*}
    \parallel P_s(x)\parallel_{\mathcal{L}(X)},\parallel P_u(x)\parallel_{\mathcal{L}(X)}\le M,\ \forall x\in\mathcal{A}.
    \end{equation*}

Since $\mathcal{T}(N)\in\mathcal{C}^1(X)$ and $\mathcal{A}$ is compact, we may choose $K\ge M$ such that 
\begin{equation*}
    \parallel D_x\mathcal{T}(N)\parallel_{\mathcal{L}(X)} \le K,\ \forall x\in\mathcal{A}.
\end{equation*}

Fix $\nu_0\in(0,1)$ such that $\lambda:=(1+\nu_0)\mu<1$ and define
\begin{equation*}
    N_1=M\frac{1+\lambda}{1-\lambda}.
\end{equation*}

Finally, take constants $k_1, \nu\in(0,\nu_0)$ such that 
\begin{equation}\label{defmu}
    k_1N_1<1\ \ \ \ \text{and}\ \ \ \ K(2K+1)\nu<\dfrac{k_1}{2}.
    \end{equation}
    
We will prove the Lipschitz Shadowing property for $\psi:=\mathcal{T}(N)|_{\mathcal{A}}$ and the  result for $\mathcal{T}(1)|_{\mathcal{A}}$ will follow from Lemmas \ref{lemaimportante} and \ref{auxiliar1}. Fix $d>0$ and let $\{x_k\}_{k\in\mathbb{Z}}\subset\mathcal{A}$ be a $d$-pseudo orbit of $\psi$. In order to align with the framework of Theorem \ref{teoremaabstrato}, for each $k\in\mathbb{Z}$ define the spaces $E_k=X$, the projections $P_k:=P_s(x_k)$, $Q_k:=P_u(x_k)$  and the maps
\begin{align*}
    \phi_k:&E_k\to E_{k+1}\\
    & v\mapsto \psi(x_k+v)-x_{k+1}
\end{align*}

Note that $D_0\phi_k=D_{x_k}\psi$ and
\begin{equation*}
    \phi_k(v)=(D_0\phi_k)v+h_{k+1}(v), \forall v\in E_k,
\end{equation*}
where $h_{k+1}:=\phi_k-D_0\phi_k$.

Since
$D_0h_{k+1}=0$ and $\psi\in\mathcal{C}^1(X)$, there exists $\Delta>0$ such that    
\begin{equation}\label{ajuda2}
    \parallel h_{k+1}(v)-h_{k+1}(v')\parallel\le \dfrac{k_1}{2}\parallel v-v'\parallel,\ \forall \parallel v\parallel,\parallel v'\parallel\le \Delta.
\end{equation}

Considering $\nu$ defined in \eqref{defmu}, fix $\epsilon>0$  as in Lemma \ref{lemmaA.0.10}. Since $\psi^{-1}$ is uniformly continuous in $\mathcal{A}$, we may reduce $\Delta>0$ such that $\Delta\in(0,\epsilon)$ also satisfies
\begin{equation*}
    x,y\in\mathcal{A}, \parallel x-y\parallel<\Delta\Longrightarrow\parallel \psi^{-1}x-\psi^{-1}y\parallel<\epsilon.
\end{equation*}

From now on we assume $d\in(0,\Delta)$. In particular,
\begin{equation*}
    \parallel\psi(x_k)-x_{k+1}\parallel\le \epsilon\ \ \  \text{and}\ \ \ \parallel x_k- \psi^{-1}(x_{k+1})\parallel\le \epsilon,\ \forall k\in\mathbb{Z}.
\end{equation*}

 It follows from Corollary \ref{lemmaA.0.10} that there exist isomorphisms $F(x_k,x_{k+1})$ and $G(x_k,x_{k+1})$ satisfying
\begin{equation*}
    \parallel F(x_k,x_{k+1})-I\parallel, \parallel G(x_k,x_{k+1})-I\parallel, \parallel G(x_k,x_{k+1})^{-1}-I\parallel\le\nu,
\end{equation*}
where $\nu$ is the constant in \eqref{defmu}, and
\begin{equation*}
    F(x_k,x_{k+1})(D_{x_k}\psi)P_k\subset S(x_{k+1}),\ G(x_k,x_{k+1})(D_{x_{k+1}}\psi^{-1})U(x_{k+1})\subset U(x_k).
\end{equation*}

Using the notation from Lemma \ref{lemma2.2.11} define the operators
\begin{equation*}
    A_k^s:=F(x_k,x_{k+1})(D_{x_k}\psi)P_k,\  \ \ \ A_k^u:=(D_{\psi^{-1}(x_{k+1})}\psi)G^{-1}(x_k,x_{k+1})Q_k
    \end{equation*}
and
\begin{equation*}
     B_k:=G(x_k,x_{k+1})D_{x_{k+1}}\psi^{-1},\ \ \ \ \   A_k:=A_k^s+A_k^u.
\end{equation*}
   
Note that $A_k^s$ relates the subspaces $S(x_k)$ and $S(x_{k+1})$ and  $B_k$ relates $U(x_{k+1})$ and $U(x_k)$. 

Now we can write $\phi_k$ as 
$$\phi_k=A_k+w_{k+1},$$
where
\begin{equation*}
    w_{k+1}=\phi_k-A_k=[D_{x_k}\psi-A_k]+h_{k+1}.
\end{equation*}

 One can show that the operators $P_k$, $Q_k$, $A_k, B_k, w_k$ and the constants $M, \lambda, k_1, N_1, \Delta$ satisfy all conditions from Theorem \ref{teoremaabstrato}. Therefore $\phi_k$ satisfies all conditions from Theorem \ref{teoremaabstrato} and we can finally apply it to guarantee the existence of constants $d_1, L>0$ satisfying (\ref{hipabstrato}) and (\ref{teseabstrato}).

Note that if $\{x_k\}_{k\in\mathbb{Z}}\subset\mathcal{A}$ is a $d$-pseudo orbit of $\psi$ with $0\le d\le d_0:=\min\{\Delta,d_1\}$, we have
\begin{equation*}
    \parallel \phi_k(0)\parallel\le d,
\end{equation*}
and therefore, from Theorem \ref{teoremaabstrato}, there exists sequence $\{v_k\}_{k\in\mathbb{N}}\subset X$ satisfying  
\begin{equation*}
    \phi_k(v_k)=v_{k+1}\ \ \text{and}\ \ \parallel v_k\parallel\le Ld, \ \forall k\in\mathbb{Z}.
\end{equation*}

Consequently, $v_{k+1}=\phi_k(v_k)=\mathcal{T}(N)(x_k+v_k)-x_{k+1}$ for each $k\in\mathbb{Z}$, which leads to 
\begin{equation*}
    \mathcal{T}(N)(x_k+v_k)=x_{k+1}+v_{k+1},\ \forall k\in\mathbb{Z}.
\end{equation*}

Therefore $\{y_k:=x_k+v_k\}_{k\in\mathbb{Z}}$ is an orbit of $\psi=\mathcal{T}(N)$ and
\begin{equation*}
    \parallel \psi^ky_0-x_{k}\parallel=\parallel y_k-x_k\parallel\le Ld,\ \forall k\in\mathbb{Z}.
\end{equation*}

This concludes the proof that $\mathcal{T}(N)|_{\mathcal{A}}$ has the Lipschitz Shadowing property on $\mathcal{A}$ with constants $L,d_0$.

	\end{proof}

\section{The Neighborhood of the Global Attractor}\label{sectionneighbo}

In this section we will prove item 2 from Theorem \ref{teointroducao}, that is,  for any positively invariant bounded neighborhood $\mathcal{U}\supset\mathcal{A}$ the map $\mathcal{T}(1)|_{\mathcal{A}}:\mathcal{A}\to \mathcal{A}$ has the H\"{o}lder Shadowing property. To do that, we will first show that if $\{x_n\}_{n\in\mathbb{Z}}\subset\mathcal{U}$ is a  $\delta-$pseudo orbit of $\mathcal{T}(1)$, where $\mathcal{U}$ is  a bounded positively invariant neighborhood, then the smaller the $\delta>0$ the closer $\{x_n\}_{n\in\mathbb{Z}}$ is to the global attractor $\mathcal{A}$, in a way that $\sup\limits_{n\in\mathbb{N}}d(x_n,\mathcal{A})\to 0$ as $\delta\to 0$ (Lemma  \ref{lemma-distancia}). We recall that if $\delta=0$ then $\mathcal{T}(1)x_n=x_{n+1}$ for all $n\in\mathbb{Z}$ and $\{x_n\}_{n\in\mathbb{Z}}$ is a bounded orbit of $\mathcal{T}(1)$, that is, $\{x_n\}_{n\in\mathbb{Z}}\subset\mathcal{A}$ (Proposition \ref{characatt}) and $d(x_n,\mathcal{A})=0$ for all $n\in\mathbb{Z}$.

\begin{lemma}\label{lemma-distancia}
Let $\{\T(t):t\ge0\}$ be a semigroup in a metric space $(M,d)$ with global attractor $\mathcal{A}$. Assume that there exists a positively invariant neighborhood $\U$ of $\mathcal{A}$ such that $\T:=\T(1)|_{\U}:\U\to\U$ is Lipschitz, with Lipschitz constant $L_1$, and there exists $C,\gamma>0$ such that
\begin{equation}\label{pedrogonza}
dist_H(\T^n\mathcal{U},\mathcal{A})\le Ce^{-\gamma n},\ \forall n\in\mathbb{N}.
\end{equation}
Then there exists $C_2>0$ and $\alpha\in(0,1)$ such that if
$\{x_n\}_{n\in\mathbb{Z}}$ is a  $d$-pseudo-orbit of $\T(1)$ in $\mathcal{U}$,  $0<d<1$, then
\begin{equation}\label{eq-lemma-distancia}
    d(x_n,\mathcal{A})\le C_2\cdot d^{\alpha}, \ \forall n\in\mathbb{Z}.
\end{equation}
\end{lemma}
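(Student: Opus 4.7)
My plan is to split $d(x_n,\mathcal{A})$ into a ``pseudo-orbit drift'' part and an ``attraction'' part, by comparing $x_n$ with the true $\T$-orbit of an earlier pseudo-orbit point. Fix $n\in\mathbb{T}$ and an integer $N\ge 1$ to be chosen as a function of $d$. Because $\mathbb{T}$ equals $\mathbb{Z}$ or $\mathbb{Z}^-$, the index $n-N$ automatically lies in $\mathbb{T}$, hence $x_{n-N}\in\mathcal{U}$ and $\T^N x_{n-N}\in\T^N\mathcal{U}$. Applying Lemma \ref{lemaimportante} with the Lipschitz constant $L_1$ of $\T$, iteration of the $d$-pseudo-orbit condition yields
\begin{equation*}
d(\T^N x_{n-N},\,x_n)\le \frac{L_1^N-1}{L_1-1}\,d
\end{equation*}
(with the convention that the factor is $N$ if $L_1=1$), while hypothesis \eqref{pedrogonza} gives $d(\T^N x_{n-N},\mathcal{A})\le Ce^{-\gamma N}$. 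The triangle inequality then produces the key bound
\begin{equation*}
d(x_n,\mathcal{A})\le \frac{L_1^N-1}{L_1-1}\,d \,+\, C e^{-\gamma N}.
\end{equation*}

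My next step is to choose $N=N(d)$ so as to balance the two terms. If $L_1\le 1$ the first term is already $\le Nd$, and picking $N\sim \log(1/d)$ yields the stronger Logarithm Shadowing-type bound $d(x_n,\mathcal{A})\lesssim d|\log d|$, so any exponent $\alpha\in(0,1)$ works. In the principal case $L_1>1$, set $\beta:=\gamma+\log L_1$ and take
\begin{equation*}
N(d):=\left\lfloor \frac{\log(1/d)}{\beta}\right\rfloor.
\end{equation*}
An elementary computation shows that both $L_1^{N(d)}d$ and $Ce^{-\gamma N(d)}$ are $O(d^{\gamma/\beta})$; plugging back,
\begin{equation*}
d(x_n,\mathcal{A})\le C_2\, d^{\alpha},\qquad \alpha:=\frac{\gamma}{\gamma+\log L_1}\in (0,1),
\end{equation*}
for a constant $C_2$ depending only on $C$, $\gamma$ and $L_1$.

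The only bookkeeping issue is that $N(d)$ must be a positive integer, which can fail for $d$ close to $1$; for such $d$ the choice $N=1$ together with \eqref{pedrogonza} already gives $d(x_n,\mathcal{A})\le d + Ce^{-\gamma}$, a uniform bound easily absorbed by enlarging $C_2$. I do not expect a real obstacle here: the argument is purely a triangle-inequality balancing in the spirit of classical shadowing estimates, and all the analytic content is supplied by the previously established Lemma \ref{lemaimportante} and the exponential attraction assumed in \eqref{pedrogonza}. The mildly delicate point is keeping track of the fact that the constant $C_2$ must be uniform in $n\in\mathbb{T}$ and in the particular pseudo-orbit, but this is transparent from the derivation since both ingredients (the Lipschitz iteration bound and \eqref{pedrogonza}) are uniform.
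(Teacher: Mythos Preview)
Your proof is correct and follows essentially the same strategy as the paper: split $d(x_n,\mathcal{A})$ via the triangle inequality into a pseudo-orbit drift term controlled by Lemma~\ref{lemaimportante} and an attraction term controlled by \eqref{pedrogonza}, then balance by choosing $N=N(d)$, arriving at the same optimal exponent $\alpha=\gamma/(\gamma+\ln L_1)$. Your presentation is in fact slightly more streamlined, since you work directly with $x_{n-N}$ for each $n$ rather than passing through the subsequence $z_k=x_{kN}$ and then shifting indices as the paper does.
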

\begin{proof} 
Without loss of generality we may assume that $L_1>1$ and also that $C>1$. 
Fix $\beta\in(0,1)$ and define 
\begin{equation*}
    K(d):=d^{\gamma \frac{1-\beta}{\log(L_1)}}, 
\end{equation*}
where $\gamma$ is the one from \eqref{pedrogonza}. It trivially satisfies $0<K(d)<1$ as long as $0<d<1$. 
Take $N\in\mathbb{N}$ such that 
\begin{equation}\label{eq-way-to-find-N-lema-distancia}
    Ce^{-\gamma N}\le K(d)\le Ce^{-\gamma (N-1)}.
\end{equation}
This is always possible since $C>1$, $K(d)\in (0,1)$ and the sequence $Ce^{-\gamma N}$ is a monotone decreasing sequence converging to $0$ as $N\to+\infty$ 


Define $z_n=x_{nN}$, for all $n\in\mathbb{Z}$. From Lemma \ref{lemaimportante} we know that $\{z_n\}_{n\in \mathbb{Z}}$ is a $C_1d$-pseudo-orbit of $\T^N$, that is
$$d(T^Nz_n,z_{n+1})\leq C_1d, \quad n\in \mathbb{Z}$$
where $$C_1=1+L_1+\cdots+L_1^{N-1}=\dfrac{L_1^N-1}{L_1-1}\leq \frac{1}{L_1-1} L_1^N.$$

Considering the second inequality on \eqref{eq-way-to-find-N-lema-distancia} and with simple computations we get
$$e^{\gamma(N-1)}\leq \frac{C}{K(d)}$$ 
and taking logarithms 
$$N\leq  1+\frac{1}{\gamma}\log(\frac{C}{K(d)})=1+\frac{\log(C)}{\gamma}-\frac{\log(K(d))}{\gamma}$$

Hence, 
\begin{equation}
    L_1^N\le  L_1^{1+\frac{\ln{C}}{\gamma}} L_1^{\frac{-\log{K(d)}}{\gamma}} 
\end{equation}
and using the expression of $K(d)$, we easily get  $ L_1^{\frac{-\log{K(d)}}{\gamma}}=d^{\beta -1}$.  Therefore, 

$$C_1d\leq \frac{1}{L_1-1}L_1^Nd\leq \frac{L_1^{1+\frac{\ln{C}}{\gamma}} }{L_1-1}d^\beta$$

Thus if we denote by $C'= \frac{L_1^{1+\frac{\ln{C}}{\gamma}} }{L_1-1}$  which is independent of $d$ we have 
$$d(T^Nz_n,z_{n+1})\le C_1 d\le C'd^{\beta}, \quad  n\in \mathbb{Z}$$
From \eqref{pedrogonza} and  \eqref{eq-way-to-find-N-lema-distancia} we conclude that
\begin{align*}
d(z_{n+1},\mathcal{A})&\le d(z_{n+1},T^Nz_n)+d(T^Nz_n,\mathcal{A})\\
&\le C'd^{\beta}+K(d)=C'd^{\beta}+ d^{\gamma \frac{1-\beta}{\log(L_1)}}= (C'+1)d^{\alpha},
\end{align*}
where $\alpha=\min\{\beta,\frac{\gamma(1-\beta)}{\ln{L_1}} \}.$ We can take $\beta=\frac{\gamma}{\gamma+\ln{L_1}}$ to maximize $\alpha$. 
Thus, taking 
\begin{equation}\label{c2}
    C_2=\frac{L_1^{1+\frac{\ln{C}}{\gamma}} }{L_1-1}+1
\end{equation}
 we obtain \eqref{eq-lemma-distancia} for the sequence $\{z_n\}_{n\in \mathbb{Z}}$.
To complete the proof apply the above arguments to the sequence $\{x_{n-r}\}_{n\in\mathbb{Z}}$ for each $r\in\{1,\dots,N-1\}$.
\end{proof}

Note that Lemma \ref{lemma-distancia} does not require the assumption of Morse-Smale, but it assumes an exponential attraction, which is a more general property. In fact, the assumption of exponential attraction given in \eqref{pedrogonza} is satisfied for Morse-Smale semigroups, as we  announce in the following lemma.

 \begin{lemma}\label{exponentialattractor}
    Let $\{\T(t):t\ge0\}$ be a Morse-Smale map in a Banach Space $X$ with global attractor $\mathcal{A}$. Then $\mathcal{A}$ attracts bounded sets exponentially, that is, for each bounded set $\mathcal{U}\subset X$  there exist $C,\gamma>0$, called constants of attraction, such that
$$dist_H(\T^n(\mathcal{U}),\mathcal{A})\le Ce^{-\gamma n}, \ \forall n\in\mathbb{N}.$$
\end{lemma}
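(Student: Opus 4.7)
My plan is to combine local exponential attraction near each hyperbolic equilibrium with a uniform entry-time argument coming from the gradient structure of $\T$. The backbone of the argument is that the attractor decomposes as $\mathcal{A} = \bigcup_{i=1}^p W^u(x_i^*)$ by Lemma \ref{lemma-Lyapunov-function}, and locally each $W^u_{loc}(x_i^*) \subset \mathcal{A}$ is ``seen'' exponentially by nearby orbits due to hyperbolicity.

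First, I would prove local exponential attraction near each equilibrium. Fix $i\in\{1,\dots,p\}$ with hyperbolicity constants $C,\lambda \in (0,1)$ from Definition \ref{hyperbolicset}. Since $W^u_{loc}(x_i^*)$ is a $\mathcal{C}^1$ manifold (Remark \ref{bebosim}) and $\T(1)$ is $\mathcal{C}^1$, the invariant manifold theorem for hyperbolic fixed points in Banach spaces provides a neighborhood $B_i$ of $x_i^*$, constants $K_i>0$, $\gamma_i = -\log\lambda > 0$, and a continuous strong-stable foliation $\{W^{ss}_{loc}(z) : z \in W^u_{loc}(x_i^*)\cap B_i\}$ such that: whenever $x \in B_i$ and $\T^k(x) \in B_i$ for $k=0,\dots,n$, there exists $z \in W^u_{loc}(x_i^*)$ with
\begin{equation*}
\|\T^n(x) - \T^n(z)\| \le K_i e^{-\gamma_i n}\|x-z\|.
\end{equation*}
Since $\T^n(z) \in W^u(x_i^*) \subset \mathcal{A}$, this yields $d(\T^n(x),\mathcal{A}) \le K_i e^{-\gamma_i n} \operatorname{diam}(B_i)$.

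Second, I would show uniform entry into $\bigcup_i B_i$. By Lemma \ref{lemma-Lyapunov-function}, every trajectory on $\mathcal{A}$ converges to some equilibrium, so for each $y\in\mathcal{A}$ there is $n(y)$ with $\T^{n(y)}(y)\in\bigcup_i B_i$. Using compactness of $\mathcal{A}$, continuity of $\T$, and the strict decrease of the Lyapunov function $V$ off $\Omega$, a standard covering argument produces a uniform $T_2\in\mathbb{N}$ such that $\T^{T_2}(\mathcal{A}) \subset \bigcup_{i=1}^p B_i'$, where $B_i' \subset B_i$ is a smaller neighborhood. By continuous dependence on initial data, this extends to a $\delta$-neighborhood $\mathcal{O}_\delta(\mathcal{A})$: $\T^{T_2}(\mathcal{O}_\delta(\mathcal{A})) \subset \bigcup_{i=1}^p B_i$. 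Since $\mathcal{A}$ attracts the bounded set $\mathcal{U}$, there exists $T_1 = T_1(\mathcal{U},\delta)$ with $\T^{T_1}(\mathcal{U}) \subset \mathcal{O}_\delta(\mathcal{A})$. Setting $N_0 = T_1+T_2$ gives $\T^{N_0}(\mathcal{U}) \subset \bigcup_i B_i$.

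Third, I would iterate the local estimate. An orbit starting in $B_i$ either remains in $B_i$ forever (in which case the local estimate of Step 1 finishes the argument), or exits $B_i$ at some time $\tau$. In the latter case the Lyapunov function $V$ has strictly decreased: the orbit must now lie in the basin of attraction of some $x_j^*$ with $j \neq i$, and an argument analogous to Step 2 (restricted to the compact sublevel set $V \le V(x_i^*) - \eta$) gives a uniform bound on the time before the orbit enters some new $B_j$. Since the orbit can visit at most $p$ distinct neighborhoods in sequence before being absorbed (the sequence of visited equilibria has strictly decreasing $V$-values), the total ``transition time'' is bounded by a constant depending only on $\mathcal{U}$, contributing only a multiplicative factor to the exponential rate. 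The final rate is $\gamma = \min_i \gamma_i / (p+1)$ (up to constants).

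The main obstacle is the uniform entry-time step: upgrading the pointwise convergence of each orbit on $\mathcal{A}$ to a neighborhood of $\Omega$ into a uniform time bound over all of $\mathcal{A}$, and then extending this from $\mathcal{A}$ to a full $X$-neighborhood. The subtlety is that points near local stable manifolds of high-index equilibria can linger near those equilibria for arbitrarily long, so the covering argument must use the Birkhoff-type compactness of $\mathcal{A}$ (as in Lemma \ref{birkhoff} in the appendix) rather than pointwise information alone. Once this uniform bound is established, the exponential decay follows essentially from hyperbolicity, as in the classical finite-dimensional proofs of Hale and Raugel for Morse--Smale systems.
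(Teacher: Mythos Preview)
Your sketch is essentially correct and follows the classical route for proving exponential attraction of gradient systems with finitely many hyperbolic equilibria. The paper, however, does not carry out this argument at all: its entire proof of Lemma~\ref{exponentialattractor} is a one-line citation to Carvalho--Cholewa \cite{carvalho2011exponential}, noting that Morse--Smale semigroups are gradient with finitely many hyperbolic equilibria (Lemma~\ref{lemma-Lyapunov-function}), so the cited result applies directly. What you have written is, in effect, an outline of the proof that lives inside that reference.

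One small technical remark: in Step~1 you invoke a continuous strong-stable foliation near each $x_i^*$. In infinite dimensions this is more than you need and somewhat delicate to justify. The argument in \cite{carvalho2011exponential} (and the standard one going back to Hale--Raugel) only uses that $W^u_{loc}(x_i^*)$ is a Lipschitz graph over the finite-dimensional unstable subspace $U_0(x_i^*)$ and that the stable component of any point in $B_i$ contracts exponentially while the orbit remains in $B_i$; this already gives $d(\T^n(x),W^u_{loc}(x_i^*))\le K_i e^{-\gamma_i n}$ without tracking a specific shadowing point $z$. Your identification of the main obstacle---upgrading pointwise convergence to a uniform entry time via compactness of $\mathcal{A}$ and the Birkhoff-type Lemma~\ref{birkhoff}---is exactly right, and the ``at most $p$ transitions'' bookkeeping in Step~3 is the correct way to globalize the local estimate.
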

\begin{proof}
     In \cite{carvalho2011exponential} the authors prove that any Gradient  semigroup (satisfies item (1) of Lemma \ref{lemma-Lyapunov-function})  with finite set of equilibria $\mathcal{E}=\{x_1^*,\dots,x_p^*\}$, where  $x_i^*$ is hyperbolic for each $i\in\{1,\dots,p\}$, satisfies the property of exponential attraction.
\end{proof}

Proceeding similarly to Lemma \ref{lemma-distancia}, we can obtain the following result depending of the Lipschitz constant $L_1$.
\begin{lemma}\label{teocontracao}
    Assume the same conditions of Lemma \ref{lemma-distancia}. Then, there exists $L>0$  such that if $\{x_n\}_{n\in\mathbb{Z}}\subset\U$ is a $d-$ pseudo orbit of $\mathcal{T}|_{\mathcal{U}}$, with $d\in(0,1)$, we have:
    \begin{enumerate}
    \item If $L_1<1$, then 
     \begin{equation*}
        d(x_n,\mathcal{A})\le Ld,\ \forall n\in\mathbb{Z}.
    \end{equation*} 
   
    \item If $L_1=1$, then
     \begin{equation*}
        d(x_n,\mathcal{A})\le Ld|\ln{d}|,\ \forall n\in\mathbb{Z}.
    \end{equation*}
\end{enumerate}

\end{lemma} 
\begin{proof}
    To prove item (1) we can proceed as in Lemma \ref{lemma-distancia} taking $K(d)=d$ and noting that $L_1^N<1$. In this case we obtain $L=\left(\dfrac{1}{1-L_1}\right)$. Similarly, we can prove item (2) taking $K(d)=d$ and noting that $C_1(d)=N$.
\end{proof}

\begin{remark}
    If $L_1<1$ then $\mathcal{T}(1)|_{\overline{\mathcal{U}}}$ is a contraction and has a unique equilibrium point $x^*$, which is asymptotically stable. Therefore, $\mathcal{A}=\{x^*\}$ and the dynamics in this case is trivial.
\end{remark}

We know announce our abstract result of H\"{o}lder Shadowing in a neighborhood of the global attractor. We will obtain item (2) of Theorem \ref{teointroducao} as a corollary of this result.
\begin{theorem}
\label{shadowingviz} Let $\{\T(t):t\ge0\}$ be a semigroup in a metric space $(M,d)$ with global attractor $\mathcal{A}$. Assume that $\T(1)|_{\mathcal{A}}:\mathcal{A}\to \mathcal{A}$ has the  Lipschitz Shadowing property. Moreover, assume that there exists $C>0$,  $\alpha\in(0,1)$ and a bounded positively invariant neighborhood $\U\supset \mathcal{A}$ such that for any $d\in[0,1)$ and any $d-$pseudo orbit $\{x_n\}_{n\in\mathbb{Z}}\subset\U$ there exists a sequence $\{\tilde{x}_n\}_{n\in\mathbb{Z}}\subset\mathcal{A}$ satisfying
\begin{equation*}
    d(x_n,\tilde{x}_n)\le Cd^{\alpha},\ \forall n\in\mathbb{Z}.
\end{equation*}

Then  $\mathcal{T}:=\mathcal{T}(1)|_{\U}:\U\to\U$ has the $\alpha$-H\"{o}lder Shadowing property.
\end{theorem}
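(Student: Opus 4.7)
The plan is to combine the Lipschitz Shadowing property on $\mathcal{A}$ with the approximation hypothesis to transfer shadowing from $\mathcal{A}$ up to $\mathcal{U}$. Given a $d$-pseudo orbit $\{x_n\}_{n\in\mathbb{Z}}\subset\mathcal{U}$, I would first invoke the hypothesis to obtain a sequence $\{\tilde{x}_n\}_{n\in\mathbb{Z}}\subset\mathcal{A}$ with $d(x_n,\tilde{x}_n)\le Cd^{\alpha}$ for all $n\in\mathbb{Z}$.

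Next I would verify that $\{\tilde{x}_n\}$ is itself a pseudo orbit of $\mathcal{T}(1)|_{\mathcal{A}}$ whose jump size is controlled by $d^{\alpha}$. For this I would use the triangle inequality
$$d(\mathcal{T}(1)\tilde{x}_n,\tilde{x}_{n+1})\le d(\mathcal{T}(1)\tilde{x}_n,\mathcal{T}(1)x_n)+d(\mathcal{T}(1)x_n,x_{n+1})+d(x_{n+1},\tilde{x}_{n+1}),$$
bounding the first term by $L_1\cdot Cd^{\alpha}$ (where $L_1$ denotes the Lipschitz constant of $\mathcal{T}(1)$ on the bounded set $\mathcal{U}$, which is finite in the Hilbert-space/$\mathcal{C}^1$ setting of the paper by the mean value theorem applied to the Fréchet derivative), the second by $d$, and the third by $Cd^{\alpha}$. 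Since $d\le d^{\alpha}$ for $d\in[0,1)$ and $\alpha\in(0,1]$, this produces a $C'd^{\alpha}$-pseudo orbit on $\mathcal{A}$ with $C'=L_1C+C+1$.

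Then, choosing $d$ small enough so that $C'd^{\alpha}$ lies below the Lipschitz Shadowing threshold on $\mathcal{A}$, I would apply the hypothesis on $\mathcal{T}(1)|_{\mathcal{A}}$ to produce $z\in\mathcal{A}$ whose orbit $z_n:=\mathcal{T}(1)^n z$ satisfies $d(z_n,\tilde{x}_n)\le L'\cdot C'd^{\alpha}$, where $L'$ is the Lipschitz Shadowing constant on $\mathcal{A}$. A final triangle inequality yields
$$d(z_n,x_n)\le d(z_n,\tilde{x}_n)+d(\tilde{x}_n,x_n)\le (L'C'+C)d^{\alpha},$$
and since $\{z_n\}\subset\mathcal{A}\subset\mathcal{U}$ this is a genuine orbit of $\mathcal{T}(1)|_{\mathcal{U}}$ that $Ld^{\alpha}$-shadows $\{x_n\}$ with $L=L'C'+C$.

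The argument is essentially triangle-inequality bookkeeping, so the only real obstacle is securing the Lipschitz constant $L_1$ of $\mathcal{T}(1)$ on $\mathcal{U}$ (needed in order to lift the pseudo-orbit property from $\{x_n\}$ to $\{\tilde{x}_n\}$). In the concrete setting where this theorem will be applied, boundedness of $D_x\mathcal{T}(1)$ on the bounded positively invariant set $\mathcal{U}$ provides such an $L_1$, so no additional hypothesis is really needed. If one wished to work in greater generality, uniform continuity of $\mathcal{T}(1)$ on bounded sets combined with a modulus-of-continuity argument could replace the Lipschitz bound, at the cost of a possibly weaker Hölder exponent.
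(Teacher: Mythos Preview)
Your argument is essentially identical to the paper's own proof: approximate the pseudo orbit by $\{\tilde{x}_n\}\subset\mathcal{A}$, use the triangle inequality (with $d\le d^{\alpha}$) to show $\{\tilde{x}_n\}$ is a $C'd^{\alpha}$-pseudo orbit in $\mathcal{A}$, apply Lipschitz Shadowing there, and conclude with one more triangle inequality. Your observation that a Lipschitz constant $L_1$ for $\mathcal{T}(1)$ on $\mathcal{U}$ is tacitly needed is exactly right---the paper uses it the same way without listing it among the hypotheses.
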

\begin{proof}
From our assumptions we can fix $d_0',L>0$ such that if  $\{y_n\}_{n\in\mathbb{Z}}\subset \mathcal{A}$ satisfies
$$d(\T y_n,y_{n+1})\le d\le d_0', \forall\ n\in \mathbb{Z},$$
then there exists $x\in\mathcal{A}$ such that
$$d(\T^nx,y_n)\le Ld, \forall \ n\in\mathbb{Z}.$$

Suppose that $\{x_n\}_{n\in\mathbb{Z}}\subset \mathcal{U}$ is a  $d$-pseudo-orbit of $\T$, with $0\le d\le 1$. Then  there exists a sequence $\{\tilde{x_n}\}_{n\in\mathbb{Z}}\subset \mathcal{A}$  such that $$d(x_n,\tilde{x_n})\le Cd^{\alpha},\ \forall n\in\mathbb{Z}.$$

Let us show that $\{\tilde{x}_n\}$ is a pseudo-orbit. Note that
    \begin{align*}
     d(\T\Tilde{x_n},\Tilde{x}_{n+1})&\le  d(\T\Tilde{x_n},\T x_n)+d(\T x_n,x_{n+1})+d(x_{n+1},\Tilde{x}_{n+1})\\
    & \le L_1Cd^{\alpha}+d^{\alpha}+Cd^{\alpha}\\
    &=C_2d^{\alpha},
\end{align*}
where $C_2=(L_1C+1+C)$. Fix $d_0\in(0,1)$ satisfying  $C_2d_0^{\alpha}\le d_0'$. Then, if $d\le d_0$ the sequence $\{\tilde{x}_n\}_{n\in\mathbb{Z}}$ is a $C_2d^{\alpha}$-pseudo-orbit in $\mathcal{A}$ and since $\mathcal{T}(1)|_{\mathcal{A}}$ has the Lipschitz Shadowing property, there exists $x\in\mathcal{A}$ such that
\begin{equation*}
    d(\T^nx,\Tilde{x_n})\le LC_2d^{\alpha},\forall\ n\in\mathbb{Z},
\end{equation*}
where $L$ is the constant of the Lipschitz Shadowing.

Therefore
\begin{equation*}
\begin{aligned}
     d(\T^nx,x_n)&\le d(\T^nx,\Tilde{x_n})+d(\Tilde{x_n},x_n)\\
     &\le C_2Ld^{\alpha}+C_2d^{\alpha}\\
     &\le C_3d^{\alpha},
\end{aligned}
\end{equation*}
where $C_3=(L+1)C_2$. This proves that there exists an orbit close to the $d-$pseudo orbit $\{x_n\}_{n\in\mathbb{Z}}\subset\mathcal{U}$.
Hence, the constants $d_0, \alpha$ and $C_3$  fulfill the properties required in the H\"{o}lder Shadowing property .
\end{proof}

Finally, we obtain item (2) from Theorem \ref{teointroducao}.
\begin{corollary}\label{msholdershado}
    Let $\{\mathcal{T}(t):t\ge0\}$ be a Morse-Smale semigroup in a Hilbert space $X$ that satisfies (H1) and (H2). Then, for any positively invariant bounded neighborhood $\U\supset \mathcal{A}$, the map $\mathcal{T}(1)|_{\U}:\U\to\U$ has the H\"{o}lder Shadowing property.
\end{corollary}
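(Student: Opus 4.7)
My plan is to assemble this corollary directly from the main pieces already established: the Lipschitz Shadowing property on $\mathcal{A}$ from Theorem \ref{shadowing}, the distance-to-attractor estimate from Lemma \ref{lemma-distancia}, and the bridging result Theorem \ref{shadowingviz}. No new constructions will be needed; the work is in verifying that each hypothesis is in force.

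First, I would verify the hypotheses of Lemma \ref{lemma-distancia} for the given neighborhood $\mathcal{U}$. Positive invariance and boundedness are assumed. The Lipschitz property of $\mathcal{T}(1)|_{\mathcal{U}}$ will be obtained from $\mathcal{T}(1)\in\mathcal{C}^{1}(X)$ together with a uniform bound on the Fr\'echet derivative along orbits starting in the bounded set $\mathcal{U}$; by positive invariance these orbits stay in $\mathcal{U}$, and the Gr\"onwall argument used to derive \eqref{exponentialboundedness} from (H1) applies here too. Next, the exponential attraction estimate $\mathrm{dist}_H(\mathcal{T}^n\mathcal{U},\mathcal{A})\le Ce^{-\gamma n}$ required in \eqref{pedrogonza} follows from Lemma \ref{exponentialattractor}, since every Morse-Smale semigroup is Gradient with finitely many hyperbolic equilibria (Lemma \ref{lemma-Lyapunov-function}).

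With both hypotheses in place, Lemma \ref{lemma-distancia} produces constants $C_2>0$ and $\alpha\in(0,1)$ such that every $d$-pseudo-orbit $\{x_n\}_{n\in\mathbb{Z}}\subset\mathcal{U}$ with $d\in(0,1)$ satisfies $d(x_n,\mathcal{A})\le C_2 d^{\alpha}$. Since $\mathcal{A}$ is compact, for each $n$ I can select a nearest point $\tilde{x}_n\in\mathcal{A}$, obtaining a sequence $\{\tilde{x}_n\}_{n\in\mathbb{Z}}\subset\mathcal{A}$ with $d(x_n,\tilde{x}_n)\le C_2 d^{\alpha}$. This is precisely the assumption fed into Theorem \ref{shadowingviz}. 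Combined with the Lipschitz Shadowing property on $\mathcal{A}$ furnished by Theorem \ref{shadowing}, Theorem \ref{shadowingviz} then delivers the $\alpha$-H\"older Shadowing property for $\mathcal{T}(1)|_{\mathcal{U}}$, which is exactly the conclusion.

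The only genuine obstacle is the one just flagged, namely ensuring the Lipschitz bound for $\mathcal{T}(1)$ on $\mathcal{U}$ rather than just on $\mathcal{A}$, since (H1) is phrased on the attractor; this is handled by observing that positive invariance of $\mathcal{U}$ and the semigroup's $\mathcal{C}^1$ structure propagate the derivative bound to all orbits within $\mathcal{U}$. Beyond that, the argument is a clean synthesis: the hard technical work (compatible subbundles, the reverse-continuity arguments of Section \ref{sectionnoatrator}, and the pseudo-orbit tracking in Section \ref{sectionneighbo}) has already been done, and this corollary is the payoff.
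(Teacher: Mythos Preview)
Your proposal is correct and follows essentially the same route as the paper: the paper's proof is the one-liner ``It follows from Theorem \ref{shadowing}, Lemmas \ref{lemma-distancia}, \ref{exponentialattractor} and Theorem \ref{shadowingviz},'' and you invoke exactly these four ingredients in the same logical order. If anything, you are more careful than the paper in flagging the Lipschitz hypothesis on $\mathcal{U}$ in Lemma \ref{lemma-distancia}, which the paper's proof leaves implicit.
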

\begin{proof}
    It follows straightforward from Theorem \ref{shadowing}, Lemmas \ref{lemma-distancia}, \ref{exponentialattractor} and Theorem \ref{shadowingviz}.
\end{proof}

To finish this section, we announce a general result of Shadowing in a neighborhood $\U\supset\mathcal{A}$ of the global attractor, where each type of Shadowing depends on the Lipschitz constant of the map $\mathcal{T}(1)|_{\U}:\U\to \U$.
 \begin{theorem}\label{teodasconstantesdelip}
Let $M$ be a metric space and $\{\T(t):t\ge0\}$ be a semigroup in $M$   with global attractor $\mathcal{A}$. Assume that $\mathcal{U}\subset M$ is a bounded positively invariant neighborhood of $\mathcal{A}$ such that $\T(1)|_{\mathcal{U}}:\U\to\U$ is Lipschitz, with Lipschitz constant $L_1> 0$.  Suppose that $\T(1)|_{\mathcal{A}}:\mathcal{A}\to\mathcal{A}$ has the Lipschitz Shadowing property and $\U$ is exponentially attracted by $\mathcal{A}$, as in \eqref{pedrogonza}. Then, It holds:
\begin{enumerate}
    \item If $L_1<1$ then $\T(1)|_{\mathcal{U}}$ has the Lipschitz Shadowing property;
    \item If $L_1=1$ then $\T(1)|_{\mathcal{U}}$ has the Logarithm Shadowing property;
\item If $L_1>1$ then $\T(1)|_{\mathcal{U}}$ has the H\"{o}lder Shadowing property.
\end{enumerate}
\end{theorem}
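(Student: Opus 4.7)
The plan is to mimic the argument of Theorem \ref{shadowingviz}, replacing the H\"older distance bound $Cd^\alpha$ by whichever bound is available from the three auxiliary lemmas that estimate the distance from a pseudo-orbit to $\mathcal{A}$, depending on the value of the Lipschitz constant $L_1$. In all three cases, the skeleton of the argument is the same: given a $d$-pseudo-orbit $\{x_n\}_{n\in\mathbb{T}}\subset\mathcal{U}$ of $\mathcal{T}(1)$, for each $n$ I would choose $\tilde{x}_n\in\mathcal{A}$ with $d(x_n,\tilde{x}_n)$ controlled by the appropriate function $\varphi(d)$ of $d$ (linear, $d|\ln d|$, or $d^{\alpha}$). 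Then the triangle inequality combined with the Lipschitz property of $\mathcal{T}(1)|_{\mathcal{U}}$ gives
\[
d(\mathcal{T}(1)\tilde{x}_n,\tilde{x}_{n+1})\le L_1\,\varphi(d)+d+\varphi(d)\le C'\,\varphi(d),
\]
so $\{\tilde{x}_n\}$ is a $C'\varphi(d)$-pseudo-orbit in $\mathcal{A}$. Applying the Lipschitz Shadowing property of $\mathcal{T}(1)|_{\mathcal{A}}$ (valid once $C'\varphi(d)$ is below the Lipschitz-Shadowing threshold on $\mathcal{A}$, which can be arranged by shrinking $d_0$) yields a point $x\in\mathcal{A}$ and a constant $L$ with $d(\mathcal{T}^n x,\tilde{x}_n)\le L C'\varphi(d)$, and then a final triangle inequality produces
\[
d(\mathcal{T}^n x,x_n)\le d(\mathcal{T}^n x,\tilde{x}_n)+d(\tilde{x}_n,x_n)\le (LC'+1)\varphi(d).
\]

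For item (1), with $L_1<1$, I would invoke Lemma \ref{teocontracao} to take $\varphi(d)=\tfrac{1}{1-L_1}d$, so the final bound is of order $d$, giving Lipschitz Shadowing on $\mathcal{U}$. For item (3), with $L_1>1$, I would apply Lemma \ref{lemma-distancia} to set $\varphi(d)=C_2 d^{\alpha}$ with the H\"older exponent $\alpha=\beta=\tfrac{\gamma}{\gamma+\ln L_1}$ given there; the conclusion is then precisely that the shadowing orbit is $Ld^{\alpha}$-close, i.e.\ $\alpha$-H\"older Shadowing. In fact (3) is essentially Theorem \ref{shadowingviz} applied with the distance estimate of Lemma \ref{lemma-distancia}, so no extra work is needed beyond checking that the hypotheses of Theorem \ref{shadowingviz} are met by the pair $(\mathcal{A},\mathcal{U})$ under the exponential attraction assumption \eqref{pedrogonza} (which is precisely what Lemma \ref{lemma-distancia} uses).

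For item (2), with $L_1=1$, I would use Lemma \ref{L_1=1} and take $\varphi(d)=Ld|\ln d|$, valid for $d\le d_0$ after a suitable shrinking of $d_0$. The same triangle-inequality argument as above then yields a constant $L'>0$ such that the shadowing orbit $x\in\mathcal{A}$ satisfies $d(\mathcal{T}^n x,x_n)\le L'd|\ln d|$, i.e.\ the Logarithm Shadowing property of Definition \ref{defshadoww}. I expect this case to require the most care, since one has to ensure that $d\mapsto d|\ln d|$ is monotone and stays below the threshold $d_0'$ needed to apply the Lipschitz Shadowing on $\mathcal{A}$; both are easily arranged by restricting $d$ to a sufficiently small interval $[0,d_0]$ with $d_0<e^{-1}$.

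The main obstacle is really just bookkeeping: verifying that the auxiliary lemmas \ref{lemma-distancia}, \ref{teocontracao}, and \ref{L_1=1} apply under the standing hypotheses of the theorem (in particular, that the exponential attraction \eqref{pedrogonza} is in force, which is given), and that the constants $C',LC'+1$ produced from the triangle inequality can be absorbed uniformly in $d$. No new ideas beyond Theorem \ref{shadowingviz} are required; the theorem is essentially a tripartite refinement of that result that tracks how the shadowing modulus inherits the regularity of the distance-to-attractor estimate for pseudo-orbits.
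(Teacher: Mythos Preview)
Your proposal is correct and follows exactly the approach the paper takes: item (3) is Theorem \ref{shadowingviz} verbatim, and for items (1) and (2) you reproduce the same triangle-inequality argument of Theorem \ref{shadowingviz} while swapping in Lemmas \ref{teocontracao} and \ref{L_1=1} (in place of Lemma \ref{lemma-distancia}) for the distance-to-attractor estimate. The paper's proof says precisely this, so there is nothing to add.
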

\begin{proof}
    Item (3) was already proved in Theorem \ref{shadowingviz}, where we used Lemma \ref{lemma-distancia} to estimate the distance from pseudo-orbits to the global attractor $\mathcal{A}$. To prove items (1) and (2) we reproduce the proof of Theorem \ref{shadowingviz} but using Lemma  \ref{teocontracao} to estimate the distance from the pseudo-orbits to $\mathcal{A}$, instead of Lemma \ref{lemma-distancia}.
\end{proof}

\section{Applications}\label{seccont}

In this section we apply the  results from Sections \ref{sectionnoatrator} and \ref{sectionneighbo} to obtain new information regarding the structural stability of Morse-Smale semigroups (Theorems \ref{robustezams} and \ref{gssna}) and 
continuity of global attractors (Theorem \ref{contfinal}).

Let us start with our results related to structural stability of Morse-Smale semigroups. It is known that small perturbations of a Morse-Smale  system, defined in a finite or infinite dimensional phase space, are still Morse-Smale (topological structural stability) and  there is a phase-diagram isomorphism between the global attractors (geometrical structural stability) \cite{Bortolan-Carvalho-Langa-book}. In the finite dimensional case,  where the phase space $M$ is a smooth compact (with no border) manifold, it is also know that there exists a topological conjugation between the perturbed Morse-Smale systems and the non-perturbed one \cite{Pilyugin}. Moreover, the homeomorphism (obtained from the conjugation) is uniformly close to the identity. Consequently, in the finite dimensional case we know that the orbits from the perturbed global attractor (that is the entire phase space) remain close to the orbits from the original global attractor. 

In the infinite dimensional setting,  the proof of the topological and geometrical structural stability of Morse-Smale semigroups \cite{bortolan2022nonautonomous,Bortolan-Carvalho-Langa-book} relies on a passage to the limit argument, and therefore does not provide  any information regarding the distance of the orbits from the perturbed and non-perturbed attractors. Motivated by this, the following result provides new information  concerning the proximity of the orbits associated with the perturbed and non-perturbed attractors. The concepts of continuity and collectively asymptotically compactness of a family of semigroups, see Definition \ref{conteaskc} and \cite{Bortolan-Carvalho-Langa-book,carvalho2012attractors}, will be used in the following theorem.

\begin{theorem}\label{robustezams}
    Let \( \{\mathcal{T}_\epsilon\}_{\epsilon \in [0,1]} \) be a family of semigroups defined in a Hilbert space $X$. Suppose \( \{\mathcal{T}_\epsilon\}_{\epsilon \in [0,1]} \) is continuous and collectively asymptotically compact at $ \epsilon = 0$ (see Definition \ref{conteaskc}).
Moreover, assume that:
\begin{itemize}
    \item[(a)]\ \  \( \mathcal{T}_\epsilon \) has a global attractor \( \mathcal{A}_\epsilon \) for each \( \epsilon \in [0, 1] \) and $\overline{ \bigcup\limits_{\epsilon \in (0,1]} \mathcal{A}_\epsilon }$ is compact;
    \item[(b)]\ \  \( \{\mathcal{T}_\epsilon\}_{\epsilon \in [0,1]} \subset \mathcal{C}^1(X) \) and \( \mathbb{R}^+ \times [0,1] \ni (t, \epsilon) \mapsto D_x\mathcal{T}_\epsilon(t) \in \mathcal{L}(X) \) is continuous for each \( x \in X \);
    \item[(c)]\ \ \( \mathcal{T}_0 \) is a Morse-Smale semigroup, satisfying conditions (H1) and (H2), with non-wandering set  
    \( \Omega_0=\mathcal{E}_0 = \{x_{1,0}^*, \cdots, x_{p,0}^*\} \).
\end{itemize}
Then, there exists an \( \epsilon_1 \in (0, 1] \) such that if $\epsilon\in[0,\epsilon_1)$, then \( \mathcal{T}_\epsilon \) is a Morse-Smale semigroup. Moreover, there exist $C>0$ and $\alpha\in(0,1)$  such that if $\xi_{\epsilon}$ is a bounded global orbit of $\mathcal{T}_{\epsilon}$, $\epsilon\in[0,\epsilon_1)$, then there exists a bounded global orbit $\xi_0$ of $\mathcal{T}_0$ such that
\begin{equation*}
   \parallel\xi_0(t)-\xi_{\epsilon}(t)\parallel\le C\parallel \mathcal{T}_0(1)-\mathcal{T}_{\epsilon}(1)\parallel^{\alpha}_{L^{\infty}(\mathcal{A}_{\epsilon},X)},\ \forall t\in\mathbb{R}.
\end{equation*}
\end{theorem}
\begin{proof}
As we said before, the robustness of the Morse–Smale structure under small perturbations (topological structural stability) is established in \cite{Bortolan-Carvalho-Langa-book}. It remains to show that  bounded orbits from the perturbed semigroups are close to bounded orbits from the limit semigroup.
     Let $\xi_{\epsilon}$ be a global solution of $\mathcal{T}_{\epsilon}$  and define the sequence $x_n=\xi_{\epsilon}(n)$ for all $n\in\mathbb{Z}$. Then,
\begin{equation*}
d(\mathcal{T}_0(1)x_n,x_{n+1})=d(\mathcal{T}_0(1)\xi_{\epsilon}(n),\mathcal{T}_{\epsilon}(1)\xi_{\epsilon}(n))\le \parallel \mathcal{T}_0(1)-\mathcal{T}_{\epsilon}(1)\parallel_{L^{\infty}(\mathcal{A}_{\epsilon},X)}.
\end{equation*}

Hence, if we define $\delta_{\epsilon}=\parallel \mathcal{T}_0(1)-\mathcal{T}_{\epsilon}(1)\parallel_{L^{\infty}(\mathcal{A}_{\epsilon},X)}$ then $\{x_n\}_{n\in\mathbb{Z}}$ is a $\delta_{\epsilon}-$pseudo orbit of $\mathcal{T}_0(1)$ in $\mathcal{U}$. Now the result  follows from  item (2) of Theorem \ref{teointroducao}.
\end{proof}

We can also generalize Theorem \ref{robustezams} to non-autonomous perturbations of a Morse-Smale semigroup. The following result will use  concepts from the non-autonomous theory, such as evolution processes, pullback attractors, hyperbolic global solutions and non-autonomous Morse-Smale evolution process \cite{carvalho2012attractors,Bortolan-Carvalho-Langa-book}. These concepts generalize the notion of semigroups, global attractors, hyperbolic equilibrium, etc.

\begin{theorem}\label{gssna}
Let $\left\{\mathcal{S}_\epsilon\right\}_{\epsilon \in [0,1]}$ be a family of evolution processes in a Hilbert space $X$ such that:
\begin{enumerate}
\item[(a)] $\mathcal{S}_\epsilon$ has a pullback attractor $\mathbb{A}_\epsilon=\{A_{\lambda}(t)\}_{t\in\mathbb{R}}$ and is reversible for each $\epsilon \in [0,1]$;
\item[(b)] $\overline{\bigcup_{\epsilon \in [0,1]} \bigcup_{t \in \mathbb{R}} A_\epsilon(t)}$ is compact;
\item[(c)] for each compact set $K \subset \mathbb{R}^{+} \times X$ we have
$$
\sup _{s \in \mathbb{R}} \sup _{(t, x) \in K}\left\|\mathcal{S}_\epsilon(t+s, s) x-\mathcal{S}_{0}(t+s, s) x\right\|_X \rightarrow 0 \quad \text { as } \epsilon \rightarrow 0
$$
\item[(d)] for each compact set $J \in \mathbb{R}^{+}$we have
$$
\sup _{s \in \mathbb{R}} \sup _{t \in J} \sup _{z \in A(s)}\left\|\mathcal{S}_\epsilon^{\prime}(t+s, s)(z)-\mathcal{S}_{0}^{\prime}(t+s, s)(z)\right\|_{\mathcal{L}(X)} \rightarrow 0 \quad \text { as } \epsilon \rightarrow 0
$$
\item[(e)] $\mathcal{S}_{0}$ is a Morse-Smale semigroup (autonomous) satisfying conditions (H1) and (H2) and with non-wandering set  $\mathcal{E}=\left\{e_1^*, \cdots, e_p^*\right\}$, where $e_i^*$ is a hyperbolic equilibrium point for each $i\in\{1,\dots,p\}$.
\end{enumerate}
   
    Then there exists $\epsilon_1\in(0,1]$ such that if $\epsilon\in[0,\epsilon_1)$ then $\mathcal{S}_{\epsilon}$ is a Morse-Smale evolution process. Moreover, there exist $C>0$ and $\alpha\in(0,1)$ such that if $\xi_{\epsilon}$ is a bounded global orbit of $\mathcal{S}_{\epsilon}$, with $\epsilon\in[0,\epsilon_1)$, then there exists a bounded global orbit $\xi_{0}$ of $\mathcal{S}_{0}$ such that
   \begin{equation*}
       \parallel\xi_{\epsilon}(t)-\xi_0(t)\parallel\le C\sup\limits_{t\in\mathbb{R}}\parallel \mathcal{S}_0(t+1,t)-\mathcal{S}_{\epsilon}(t+1,t)\parallel_{L^{\infty}(\mathcal{A}_{\epsilon},X)},\ \forall t\in\mathbb{R}.
   \end{equation*}
\end{theorem}

\begin{proof}
    The proof follows as in Theorem \ref{robustezams}, noting that if $\xi_{\epsilon}$ is an orbit of $\mathcal{S}_{\epsilon}$ then  $\{\xi_{\epsilon}(n)\}_{n\in\mathbb{Z}}$  is a pseudo-orbit of $\mathcal{S}_0(1,0)=\mathcal{S}_0(t+1,t)$ for all $t\in\mathbb{R}$.
\end{proof}

 We now provide our last result, which is related to continuity of global attractors. Let $(M,d)$ be a metric space and $\mathcal{T}_{\epsilon}:=\{\mathcal{T}_{\epsilon}(t):t\ge0\}_{\epsilon\in[0,1]}$ be a family of semigroups defined in $M$. Suppose that  for each $\epsilon\in[0,1]$ the semigroup $\mathcal{T}_{\epsilon}$ has a global attractor $\mathcal{A}_{\epsilon}$. In \cite{raugel2002global,carvalho2014equi} the authors show that if $\mathcal{A}_0$ attracts bounded sets of $M$ exponentially (as in \eqref{exponentialattractor}), then there exists $C>0$ and $\alpha\in (0,1)$ such that
 \begin{equation}\label{pais}
dist_H(\mathcal{A}_{\epsilon},\mathcal{A}_0)\le C\parallel \T_{\epsilon}(1)-\T_{0}(1)\parallel^{\alpha}_{L^{\infty}(\mathcal{A}_{\epsilon},X)},\ \forall \epsilon\in [0,1],
 \end{equation}
 where $dist_H$ denotes the Hausdorff semidistance (see Definition \ref{defatt}).

 In the following result we provide conditions on the semigroup $\mathcal{T}_0$ so that we can approximate the exponent $\alpha$ in \eqref{pais} to $1$. We highlight that we will not have any assumption regarding  Morse-Smale semigroups. In fact, since Lemma \ref{lemma-distancia} does not require the semigroup to be Morse-Smale,  this assumption will  not be necessary.

  \begin{theorem}\label{contfinal}
       Let $(M,d)$ be a metric space and $\{\T(t):t\ge 0\}$ be a semigroup in $M$ with global attractor $\mathcal{A}$. Let $\U\supset\mathcal{A}$ be a bounded positively invariant set that is exponentially attracted by $\mathcal{A}$ (as in Lemma \ref{lemma-distancia}) and $\mathcal{T}(1)|_{\mathcal{U}}$ is Lipschitz. Define $L_n=Lip(\T(n)|_{\mathcal{U}})$ for each $n\in\mathbb{N}$. The following properties hold:
       \begin{enumerate}
           \item  If      \begin{equation}\label{limsup}
\limsup_{n\to+\infty}\dfrac{n}{\ln{L_n}}=+\infty
       \end{equation}
       then there exists $C_3>0$ satisfying: for any $\alpha\in(0,1)$ there exists $N\in\mathbb{N}$ such that if $\{\T_{\epsilon}(t):t\ge0\}\subset\mathcal{C}(M)$ is a semigroup  with global attractor $\mathcal{A}_{\epsilon}\subset \mathcal{U}$, then
           \begin{equation*}
dist_H(\mathcal{A}_{\epsilon},\mathcal{A})\le C_3\parallel \mathcal{T}(N)-\T_{\epsilon}(N)\parallel^{\alpha}_{L^{\infty}(\mathcal{U},M)}.
           \end{equation*}
    \item If 
    \begin{equation*}
\lim_{n\to+\infty}\dfrac{n}{\ln{L_n}}=+\infty
       \end{equation*}
    then there exists $C_3>0$ satisfying: for any $\alpha\in(0,1)$ there exists $N\in\mathbb{N}$ such that if $\{\T_{\epsilon}(t):t\ge0\}\subset\mathcal{C}(M)$ is a semigroup  with global attractor $\mathcal{A}_{\epsilon}\subset \mathcal{U}$ then
    \begin{equation*}
dist_H(\mathcal{A}_{\epsilon},\mathcal{A})\le C_3\inf\limits_{n\ge N}\parallel \mathcal{T}(n)-\T_{\epsilon}(n)\parallel^{\alpha}_{L^{\infty}(\mathcal{U},M)}.
    \end{equation*}
    \item If \eqref{limsup} holds and $\mathcal{T}(1)|_{\mathcal{A}}:\mathcal{A}\to\mathcal{A}$ has the Lipschitz Shadowing property, then given $\alpha\in(0,1)$ there exists $C_3>0$ satisfying: for any semigroup $\{\T_{\epsilon}(t):t\ge0\}\subset\mathcal{M}$ with a global attractor $\mathcal{A}_{\epsilon}\subset \mathcal{U}$ we have
    \begin{equation*}
dist_H(\mathcal{A}_{\epsilon},\mathcal{A})\le C_3\parallel \mathcal{T}(1)-\mathcal{T}_{\epsilon}(1)\parallel^{\alpha}_{L^{\infty}(\mathcal{U},M)}.
    \end{equation*}
       \end{enumerate}
   \end{theorem}
   \begin{proof}
Proceeding as in the proof of Theorem \ref{robustezams} we obtain that the bounded global orbits $\xi_{\epsilon}$ of $\mathcal{T}_{\epsilon}$ are close to the bounded global orbits $\xi$ of $\mathcal{T}$. Putting this with Proposition \ref{characatt} we obtain
\begin{equation*}
dist_H(\mathcal{A}_{\epsilon},\mathcal{A}_0)\le C_2 \parallel\mathcal{T}(1)-\mathcal{T}_{\epsilon}(1)\parallel^{\alpha}_{L^{\infty}(\mathcal{U},M)},
\end{equation*}
where   $C_2$ and $\alpha$ are given in the proof of Lemma \ref{lemma-distancia} 
       \begin{equation}\label{lasttime}
    C_2=L_1^{\frac{\ln{C}}{\gamma}}\dfrac{L_1}{L_1-1}+1\ \ \text{and}\ \ \alpha=\min\left\{(1-\beta),\frac{\beta\gamma}{\ln{L_1}}\right\}.
       \end{equation}
       
  Given $n\in\mathbb{N}$ we can define the new semigroup $\mathcal{T}_n=\{\T(nt):t\ge0\}$. Note that $Lip(\mathcal{T}_n(1))=L_n$ and the global attractor of $\mathcal{T}_n$ is $\mathcal{A}$, with constants of attraction (see Lemma \ref{exponentialattractor})  $C$ and $n\gamma$. 

  Since
  \begin{equation*}
     \sup_{n\in\mathbb{N}} \left\{L_n^{\frac{\ln{C}}{n\gamma}}\dfrac{L_n}{L_n-1}\right\}\le L_1^{\frac{\ln{C}}{\gamma}} \sup_{n\in\mathbb{N}}\left\{\dfrac{L_1^n}{L_1^n-1}\right\}<+\infty
  \end{equation*}
  then  there exists $C_3>0$ such that
 \begin{equation*}
      \sup_{n\in\mathbb{N}} \left\{L_n^{\frac{\ln{C}}{n\gamma}}\dfrac{L_n}{L_n-1}+1\right\}\le C_3.
 \end{equation*}
 
 Given $\beta\in(0,1)$ take $N\in\mathbb{N}$ such that
 \begin{equation*}
     \dfrac{\beta N\gamma}{\ln{L_N}}>1-\beta.
 \end{equation*}
 
 It follows from \eqref{lasttime} that if $\{x_n\}_{n\in\mathbb{Z}}\subset \mathcal{U}$ is a $d-$pseudo orbit of $\mathcal{T}_N$, then 
 \begin{equation*}
d(x_n,\mathcal{A})\le C_3d^{1-\beta},\ \forall n\in\mathbb{Z}.
 \end{equation*}
 
Since $\beta\in(0,1)$ was arbitrary, we proved items (1) and (2). To prove item (3) we can proceed as in the proof of items (1) and (2) to estimate the distance between pseudo orbits of $\mathcal{T}(n)|_{\mathcal{U}}$ and the global attractor $\mathcal{A}$, and then use Lemmas \ref{lemaimportante} and \ref{auxiliar1} to estimate the distance between  pseudo orbits of $\mathcal{T}(1)|_{\mathcal{U}}$ and $\mathcal{A}$.
   \end{proof}

Note that Theorem \ref{contfinal} requires that the semigroup  attract bounded sets at an exponential rate but the Lipschitz constant of the semigroup does not grow exponentially on a neighborhood of the global attractor. Hence, the global attractor must have an exponential attraction without having any unstable hyperbolic equilibria ($x^*$ is hyperbolic and $W^{u}(x^*)\neq x^*$), otherwise  the Lipschitz constant of the semigroup would grow exponentially. In \cite{carvalho2011exponential} the authors provide results that allow us to obtain semigroups with  global attractors that attracts bounded sets at an exponential rate, but have no unstable hyperbolic equilibria. In fact,  to obtain exponential attraction it is sufficient to have hyperbolicity for the  "stable part" of the equilibria. Let us provide an example based in \cite{carvalho2011exponential}. In the same reference the reader can find  more examples, many of them originated from bifurcations.

Consider the ODE in $\mathbb{R}^n$
     \begin{equation}\label{origiode}
     \dot{u}=-(1-\parallel u\parallel^2)^3(2-\parallel u\parallel)u.
    \end{equation}

Multiplying both sides by $u$ we have
\begin{equation*}
    \dfrac12\dfrac{d}{dt}\parallel u\parallel^2=-(1-\parallel u\parallel^2)^3(2-\parallel u\parallel)\parallel u\parallel^2
\end{equation*}
and denoting $\rho=\parallel u\parallel$ we have the scalar ODE
\begin{equation}\label{scalarode}
    \dfrac12\dfrac{d}{dt}\rho^2=-(1-\rho^2)^3(2-\rho)\rho^2\Longrightarrow\dot{\rho}=-(1-\rho^2)^3(2-\rho)\rho.
\end{equation}

The solutions of equation \eqref{scalarode}  generate a Gradient semigroup $\mathcal{T}_{\rho}$ (satisfies item (1) from Lemma \ref{lemma-Lyapunov-function})  with global attractor $\mathcal{A}_{\rho}=[0,2]$. Moreover, the solutions of \eqref{origiode} generate a Gradient semigroup $\mathcal{T}$ with global attractor $\mathcal{A}=B_{\mathbb{R}^n}[0,2]=\{x\in\mathbb{R}^n:\parallel x\parallel\le2\}$. From a straightforward computation we obtain that the equilibria $\{0,2\}$ of $\mathcal{T}_{\rho}$ are stable  and the equilibrium point $\{1\}$ is unstable and not hyperbolic.

Note that if $\rho>2$, then
\begin{equation*}
    \dfrac{d}{dt}(\rho-2)\le -54(\rho-2)
\end{equation*}
and consequently
\begin{equation*}
    d(u(t),\mathcal{A})\le \max\{0,\parallel u(0)\parallel-2\}e^{-54t},\ \forall t\ge0.
\end{equation*}

Therefore $\mathcal{A}$ attracts points exponentially and from \cite{carvalho2011exponential}  $\mathcal{A}$ attracts bounded sets of $\mathbb{R}^n$ exponentially. Moreover, since $\rho^*=1$ is not a hyperbolic equilibrium point of $\mathcal{T}_{\rho}$, the Lipschitz constant $\tilde{L}_n$ of $\mathcal{T}(n)|_{\mathcal{A}}$  does not grow exponentially. Therefore, for any positively invariant bounded neighborhood $\mathcal{U}\supset\mathcal{A}$, the sequence $L_n=Lip(\mathcal{T}(n)|_{\mathcal{U}})$ does also not grows exponentially. Hence, the semigroup $\T$ satisfies the assumptions from Theorem \ref{contfinal}.

\section{Appendix}\label{appendix}
We include in this appendix a collection of concepts and results that have been used in the proofs of the main results of the paper.
 
\begin{definition}\label{homoclinic}
Let $(M,d)$ be a metric space and  $\mathcal{T}=\{\mathcal{T}(t):t\ge0\}$ be a  semigroup in  $M$ with global attractor $   \mathcal{A}$ and set of equilibria $\mathcal{E}$. A \textbf{homoclinic structure} in $\mathcal{A}$ is a subset $\{x_{l_1}^*,\dots,x^*_{l_k}\}\subset\mathcal{E}$ and a set of non constant global solution $\{\xi_i:\mathbb{R}\to M\}_{i=1,\dots,k}$, satisfying
\begin{equation*}
 \lim\limits_{t\to-\infty}\xi_i(t)=x_{l_i}^*\ \ \ \text{and}\ \ \ \lim\limits_{t\to+\infty}\xi_i(t)=x_{l_{i+1}}^*,\ \ i=1,\dots,k
\end{equation*}
where we define $x_{l_{k+1}}^*=x_{l_{1}}^*$.
\end{definition}
\begin{lemma}\label{birkhoff}
Let $(M,d)$ be a metric space and $\mathcal{T}=\{\mathcal{T}(t):t\ge0\}$ be a semigroup in $M$ with global attractor $\mathcal{A}$ and set of equilibria $\mathcal{E}=\{x^*_1,\dots,x^*_p\}$. Suppose that $\mathcal{T}$ satisfies item (2) of Lemma \ref{lemma-Lyapunov-function}. Then for each $\epsilon>0$ and bounded set $B\subset M$ there exists $t_0>0$, called Birkhoff number of $B$, such that
\begin{equation*}
    \{\T(t)x:t\in[0,t_0]\}\cap\left(\bigcup_{j=1}^p \mathcal{O}_{\epsilon}(x^*_j)\right)\neq\emptyset,\ \forall x\in B.
\end{equation*}
\end{lemma}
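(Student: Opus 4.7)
The plan is to combine the bounded-set attraction property of $\mathcal{A}$ with the pointwise convergence of orbits on $\mathcal{A}$ to equilibria guaranteed by (G1), and then upgrade this to a uniform statement via compactness of $\mathcal{A}$. The intuition is that any orbit starting in $B$ quickly enters a small neighborhood of $\mathcal{A}$, and once inside such a neighborhood it inherits the limiting behavior of nearby points of the attractor, which are all attracted to equilibria within a time that is bounded independently of the base point.

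First I would apply (G1) pointwise on $\mathcal{A}$: for each $x\in\mathcal{A}$ there is a bounded global solution $\xi_x$ through $x$, and by hypothesis $\xi_x(t)=\mathcal{T}(t)x\to x^*_{j(x)}$ as $t\to+\infty$ for some $j(x)\in\{1,\dots,p\}$. Fixing $\varepsilon>0$, choose $t_x>0$ with $d(\mathcal{T}(t_x)x,\,x^*_{j(x)})<\varepsilon/2$. Continuity of the map $y\mapsto \mathcal{T}(t_x)y$ at $x$ then yields an open neighborhood $V_x$ of $x$ in $M$ such that $\mathcal{T}(t_x)(V_x)\subset \mathcal{O}_\varepsilon(x^*_{j(x)})$. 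The family $\{V_x\}_{x\in\mathcal{A}}$ is an open cover of the compact set $\mathcal{A}$, so there is a finite subcover $V_{x_1},\dots,V_{x_k}$ with associated times $t_{x_1},\dots,t_{x_k}$. Setting $T_2:=\max_{i}t_{x_i}$, the choice guarantees that every $y\in\bigcup_{i=1}^k V_{x_i}$ is mapped by some $\mathcal{T}(s)$ with $s\le T_2$ into $\bigcup_{j=1}^p\mathcal{O}_\varepsilon(x^*_j)$.

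Next I would transfer from a neighborhood of $\mathcal{A}$ back to the bounded set $B$. Writing $U:=\bigcup_{i=1}^k V_{x_i}$, we have an open set $U$ containing the compact set $\mathcal{A}$; a standard metric-space fact supplies $\delta>0$ with $\mathcal{O}_\delta(\mathcal{A})\subset U$. The defining property of the global attractor gives $T_1\ge 0$ with $\mathcal{T}(T_1)B\subset\mathcal{O}_\delta(\mathcal{A})\subset U$. Hence for each $x\in B$ there is an index $i$ with $\mathcal{T}(T_1)x\in V_{x_i}$, and therefore
\begin{equation*}
\mathcal{T}(T_1+t_{x_i})x=\mathcal{T}(t_{x_i})\bigl(\mathcal{T}(T_1)x\bigr)\in \mathcal{O}_\varepsilon(x^*_{j(x_i)}).
\end{equation*}
Setting $t_0:=T_1+T_2$ we have $T_1+t_{x_i}\in[0,t_0]$, so $\gamma_{[0,t_0]}(x)\cap\bigl(\bigcup_{j=1}^p\mathcal{O}_\varepsilon(x^*_j)\bigr)\ne\emptyset$ for every $x\in B$, as required.

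The only mildly subtle point is the passage from the pointwise convergence on $\mathcal{A}$ supplied by (G1) to a uniform arrival time in a full neighborhood of $\mathcal{A}$; this is exactly what the continuity of $\mathcal{T}(t_x)$ combined with a finite subcover provides. No deeper dynamical information is needed beyond (G1) and the attraction property of $\mathcal{A}$, and the argument is essentially a two-step application of compactness.
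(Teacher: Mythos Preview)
Your argument is correct and is exactly the standard two-step compactness proof one finds for this lemma: pointwise convergence to equilibria on $\mathcal{A}$ plus continuity gives a uniform arrival time on a neighborhood of $\mathcal{A}$, and then attraction of the bounded set $B$ closes the loop. The paper does not give its own proof but simply cites \cite[Lemma 3.13]{Bortolan-Carvalho-Langa-book}, and your write-up is precisely the argument that reference contains.
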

\begin{proof}
    See \cite[Lemma 3.13]{Bortolan-Carvalho-Langa-book}.
\end{proof}
\begin{lemma}\label{sainaoentra}
Let $(M,d)$ be a metric space and $\mathcal{T}=\{\mathcal{T}(t) : t \ge0\}$ be a semigroup in $M$ with global attractor $\mathcal{A}$ and set of equilibria $\mathcal{E}=\{x_1^*,\dots,x_p^*\}$. Assume that $\mathcal{T}$ is Dynamically Gradient \cite{Bortolan-Carvalho-Langa-book}, i.e. $\mathcal{T}$ satisfies items (2) and (3) from Lemma \ref{lemma-Lyapunov-function}. Then for any $i\in\{1,\dots,p\}$ and  $\delta>0$ there exists $0<\delta'<\delta$ such that if $x\in B(x^*_i,\delta')$ and $\mathcal{T}(t_0)x\notin B(x^*_i,\delta)$ for some $t_0>0$, then
\begin{equation*}
    \mathcal{T}(t)x\notin B(x^*_i,\delta'),\ \forall t\ge t_0.
\end{equation*}
\end{lemma}
\begin{proof}
    See \cite[Lemma 3.14]{Bortolan-Carvalho-Langa-book}.
\end{proof}
\begin{proposition}\label{subsequenciasolucoes}
Let $(M,d)$ be a metric space and $\mathcal{T}=\{\mathcal{T}(t) : t \ge0\}$ be a semigroup in $M$  with global attractor $\mathcal{A}$. Let $\{u_k\}_{k \in \mathbb{N}}$ be a bounded sequence in $M$ and $\{\sigma_k\}_{k \in \mathbb{N}}$ be a sequence of positive numbers such that $\sigma_k \to \infty$ as $k\to+\infty$.  Define $J_k = \{s \in \mathbb{R} : -\sigma_k \leq s < \infty\}$ and consider the orbits $\xi_k : J_k \to M$, where $\xi_k(s) = \mathcal{T}(s + \sigma_k)u_k$, $s \in J_k$. Then, there exists a bounded global orbit $\xi : \mathbb{R} \to \mathcal{A}$ of $\mathcal{T}$ and a subsequence of $\{\xi_k\}_{k \in \mathbb{N}}$ (which we again denote by $\{\xi_k\}_{k \in \mathbb{N}}$) such that
\begin{equation}\label{eqdconv}
     \xi_k(s) \xrightarrow{k\to+\infty} \xi(s), \quad \forall s \in \mathbb{R}.
\end{equation}
\end{proposition}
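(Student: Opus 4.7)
The plan is to use compactness of the global attractor $\mathcal{A}$ together with its attraction of bounded sets to run a Cantor diagonal extraction along integer times, and then extend the resulting limit to a genuine global solution via the forward semigroup action.

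First, I would observe that for any fixed $s\in\mathbb{R}$ and any $k$ with $\sigma_k\ge -s$, the value $\xi_k(s)=\mathcal{T}(s+\sigma_k)u_k$ is obtained by flowing the bounded sequence $\{u_k\}$ forward for time $s+\sigma_k\to+\infty$. By Definition~\ref{defatt} this forces $d(\xi_k(s),\mathcal{A})\to 0$, and by compactness of $\mathcal{A}$ the tail of $\{\xi_k(s)\}_k$ is sequentially precompact with every accumulation point lying in $\mathcal{A}$. Enumerating $\mathbb{Z}$ as $0,-1,1,-2,2,\ldots$ and applying this observation at each integer, a Cantor diagonal extraction would produce a subsequence (still denoted $\{\xi_k\}$) for which $\xi_k(n)\to\xi(n)\in\mathcal{A}$ as $k\to\infty$ for every $n\in\mathbb{Z}$.

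Second, I would verify the cocycle identity on $\mathbb{Z}$: for integers $n<m$ and all $k$ with $-\sigma_k\le n$, one has $\xi_k(m)=\mathcal{T}(m-n)\xi_k(n)$, so passing to the limit and using continuity of $\mathcal{T}(m-n):M\to M$ yields $\xi(m)=\mathcal{T}(m-n)\xi(n)$. With this in hand I would extend $\xi$ to all of $\mathbb{R}$ by setting $\xi(t):=\mathcal{T}(t-n)\xi(n)$ for any $n\in\mathbb{Z}$ with $n\le t$; the definition is independent of the chosen $n$ by the integer cocycle identity, is continuous in $t$ by joint continuity of the semigroup, and satisfies $\mathcal{T}(s)\xi(t)=\xi(t+s)$ for every $s\ge 0$ and $t\in\mathbb{R}$. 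Hence $\xi$ is a global solution of $\mathcal{T}$, in fact valued in $\mathcal{A}$.

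Finally, pointwise convergence at every real $s$ reduces to the integer case: picking $n\in\mathbb{Z}$ with $n\le s$ and invoking $\xi_k(s)=\mathcal{T}(s-n)\xi_k(n)$ together with $\xi_k(n)\to\xi(n)$ and continuity of $\mathcal{T}(s-n)$ gives $\xi_k(s)\to \mathcal{T}(s-n)\xi(n)=\xi(s)$. The argument is essentially standard; no step is genuinely delicate, and the only bookkeeping to watch is that in the diagonal extraction one restricts to tails of $k$ large enough so that every relevant negative integer lies in $J_k$, which is automatic after passing to a subsequence.
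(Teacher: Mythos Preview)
Your argument is correct and is the standard proof of this compactness/diagonal-extraction result. The paper itself does not prove the proposition at all: its proof consists solely of the line ``See \cite{Bortolan-Carvalho-Langa-book}'', so there is no in-paper argument to compare against; your write-up is essentially what one finds in that reference.
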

\begin{proof}
    See \cite{Bortolan-Carvalho-Langa-book}.
\end{proof}
\begin{lemma}\label{projfechada}
	    Let $X$ be a normed vector space and $P,Q\in\mathcal{L}(X)$ be continuous projections such that $\parallel P-Q\parallel_{\mathcal{L}(X)} <1$. Then $R(P)$ is isomorphic to $R(Q)$.
	\end{lemma}
\begin{proof}
    
\end{proof}


\begin{lemma}\label{gj}
Let $X$ be a Hilbert space and $P,Q\in\mathcal{L}(X)$ be continuous orthogonal projections. Then
$\parallel PQ\parallel_{\mathcal{L}(X)}=\parallel QP\parallel_{\mathcal{L}(X)}$.
\end{lemma}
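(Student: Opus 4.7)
The plan is to exploit the fact that in a Hilbert space $X$, orthogonal projections are characterized as the continuous projections that are self-adjoint: $P = P^*$ and $Q = Q^*$. This is the essential input that is not available for general (oblique) projections, which matches the role of the hypothesis in the statement.

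First I would recall that for any bounded linear operator $T\in\mathcal{L}(X)$ on a Hilbert space one has the isometric identity $\|T\|_{\mathcal{L}(X)}=\|T^*\|_{\mathcal{L}(X)}$, a standard consequence of the definition of the adjoint via the Riesz representation theorem. Next, I would compute the adjoint of the composition: using the general rule $(AB)^* = B^*A^*$ together with $P^*=P$ and $Q^*=Q$, one obtains
\begin{equation*}
(PQ)^* = Q^* P^* = QP.
\end{equation*}
Putting these two facts together gives
\begin{equation*}
\|PQ\|_{\mathcal{L}(X)} = \|(PQ)^*\|_{\mathcal{L}(X)} = \|QP\|_{\mathcal{L}(X)},
\end{equation*}
which is the desired equality.

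There is no real obstacle here; the only subtlety worth flagging is that the argument uses the Hilbert space structure in two places (to identify orthogonal projections as self-adjoint operators, and to guarantee $\|T\|=\|T^*\|$), and that both are genuinely needed — for non-orthogonal projections in a Hilbert space, or for projections in a general Banach space, the conclusion can fail.
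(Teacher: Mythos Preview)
Your proof is correct and is exactly the argument the paper has in mind: the paper's own proof consists of the single sentence ``Just note that $P$ and $Q$ are self-adjoint,'' and you have simply spelled out the two standard facts ($\|T\|=\|T^*\|$ and $(PQ)^*=Q^*P^*=QP$) that make that remark into a complete proof.
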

\begin{proof}
    Just note that $P$ and $Q$ are self-adjoint.
\end{proof}

\begin{lemma}\label{iminv}
    Let $X$ be a normed vector space and $V,W$ subspaces of $X$. Let $T:X\to X$ be an injective linear map such that $W\subset T(X)$. Then
    \begin{equation*}
        T^{-1}(V+W)=T^{-1}(V)+T^{-1}(W),
    \end{equation*}
    where $T^{-1}$ denotes the inverse image of $T$.
\end{lemma}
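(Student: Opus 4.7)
The plan is to establish the set equality $T^{-1}(V+W)=T^{-1}(V)+T^{-1}(W)$ by double inclusion, each direction being essentially formal manipulation of preimages.

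First I would prove the inclusion $T^{-1}(V)+T^{-1}(W)\subset T^{-1}(V+W)$, which is the straightforward direction and uses only the linearity of $T$. Given $u\in T^{-1}(V)$ and $v\in T^{-1}(W)$, so that $T(u)\in V$ and $T(v)\in W$, linearity gives $T(u+v)=T(u)+T(v)\in V+W$, hence $u+v\in T^{-1}(V+W)$. This step uses neither the injectivity of $T$ nor the hypothesis $W\subset T(E)$.

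The reverse inclusion $T^{-1}(V+W)\subset T^{-1}(V)+T^{-1}(W)$ is where the hypothesis $W\subset T(E)$ is crucially used, and I would carry it out as follows. Fix $x\in T^{-1}(V+W)$, so that $T(x)\in V+W$, and write $T(x)=v'+w'$ with $v'\in V$ and $w'\in W$. Since $w'\in W\subset T(E)$, we may pick $y\in E$ with $T(y)=w'$; this is the only non-formal step, and the key observation is that without $W\subset T(E)$ such a $y$ need not exist, so the decomposition of $x$ would fail. Then by construction $y\in T^{-1}(W)$, and moreover $T(x-y)=T(x)-T(y)=v'\in V$, so $x-y\in T^{-1}(V)$. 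Consequently $x=(x-y)+y\in T^{-1}(V)+T^{-1}(W)$, which closes the argument.

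There is no real obstacle here; the only subtle point is noticing that injectivity of $T$ is not actually needed for the identity itself (the element $y$ such that $T(y)=w'$ need not be unique). The injectivity hypothesis is included because in the applications of the lemma in the paper, namely in Proposition \ref{extensaos} and in the construction of the subbundles $S_i$ inside Theorem \ref{lemma2.2.9}, the map $T$ will be a Fréchet derivative $D_x\mathcal{T}(t)$ which is an isomorphism onto its range, and one wants to treat $T^{-1}$ as a genuine inverse on $R(T)$; the surjectivity-like condition $W\subset T(E)$ is the substantive hypothesis, and it precisely mirrors the inclusions $\mathcal{U}(x)\subset R(D_{\mathcal{T}(-t)x}\mathcal{T}(t))$ established in \eqref{maoehhi}.
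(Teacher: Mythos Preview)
Your proof is correct and follows essentially the same approach as the paper's own argument: both dismiss the inclusion $T^{-1}(V)+T^{-1}(W)\subset T^{-1}(V+W)$ as routine, and for the reverse inclusion both decompose $T(x)=v+w$, use $W\subset T(E)$ to get a preimage of $w$, and conclude that $v$ also lies in the range so that $x$ splits accordingly. Your remark that injectivity is not actually needed for the set identity is accurate; the paper invokes it only to treat $T^{-1}$ as a genuine inverse on $R(T)$, whereas your formulation with an arbitrary preimage $y$ sidesteps this cleanly.
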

\begin{proof}
    The inclusion $T^{-1}(V)+T^{-1}(W)\subset T^{-1}(V+W)$ is obvious. If $x\in T^{-1}(V+W)$ then $Tx=v+w$, with $v\in V$ and $w\in W$. Since $W\subset T(E)$ and $T$ is injective we conclude that $v,w\in T(E)$ and therefore $x=T^{-1}v+T^{-1}w$.
\end{proof}

\begin{lemma}\label{lemalegalzao}
Let $X$ be a Hilbert space and $S,U$ be closed subspaces of $X$ such that  
$S\oplus U=X$. Then, using the notation defined on Remark \ref{defprosepa}, we have
\begin{equation*}
    \parallel P_{SS^{\perp}}\circ
P_{UU^{\perp}}\parallel_{\mathcal{L}(X)}<1.
\end{equation*}
\end{lemma}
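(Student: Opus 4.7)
The plan is to reduce the desired bound to an estimate for the orthogonal projection $P_{SS^{\perp}}$ restricted to the subspace $U$. Writing $y:=P_{UU^{\perp}}x$ and using that $P_{UU^{\perp}}$ fixes $U$, as $x$ ranges over the unit ball of $X$ the vector $y$ ranges over the entire closed unit ball of $U$; by homogeneity,
\begin{equation*}
\|P_{SS^{\perp}}\circ P_{UU^{\perp}}\|_{\mathcal{L}(X)}=\sup_{y\in U,\ \|y\|=1}\|P_{SS^{\perp}}y\|.
\end{equation*}
It therefore suffices to prove that this supremum is strictly less than $1$.

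I would argue by contradiction. Suppose there is a sequence $\{y_n\}\subset U$ with $\|y_n\|=1$ and $\|P_{SS^{\perp}}y_n\|\to 1$, and set $s_n:=P_{SS^{\perp}}y_n\in S$. Because $y_n-s_n\perp S$, the Pythagorean identity gives $\|y_n-s_n\|^2=1-\|s_n\|^2\to 0$, so $s_n$ and $y_n$ become asymptotically equal even though $s_n\in S$ while $y_n\in U$. To exploit the separation between $S$ and $U$, I would invoke the oblique projection $P_{SU}$ onto $S$ parallel to $U$ from Remark \ref{defprosepa}: since $S\oplus U=X$ with both summands closed, the closed graph theorem ensures $P_{SU}\in\mathcal{L}(X)$. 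Applying $P_{SU}$ to $y_n-s_n$ kills the $U$-component $y_n$ and fixes $-s_n$, which yields
\begin{equation*}
\|s_n\|=\|P_{SU}(y_n-s_n)\|\le \|P_{SU}\|_{\mathcal{L}(X)}\cdot\|y_n-s_n\|\longrightarrow 0,
\end{equation*}
contradicting $\|s_n\|\to 1$. Hence the supremum above is strictly below $1$.

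The only delicate ingredient is the boundedness of the oblique projection $P_{SU}$, but this is precisely what the hypothesis $S\oplus U=X$ (with both subspaces closed) grants, either directly via the closed graph theorem applied to $P_{SU}$ or, equivalently, via the open mapping theorem applied to the continuous bijection $S\times U\ni(s,u)\mapsto s+u\in X$. With this continuity at hand, the remainder of the proof is a short Pythagoras-plus-oblique-projection computation, so I do not anticipate any serious obstacle.
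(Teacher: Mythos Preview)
Your proof is correct and uses essentially the same mechanism as the paper's: reduce to $\sup_{v\in U,\ \|v\|=1}\|P_{SS^\perp}v\|$, then combine the Pythagorean identity for the orthogonal projection with the boundedness of the oblique projection coming from $S\oplus U=X$. The only cosmetic differences are that the paper uses $P_{US}$ (you use $P_{SU}=I-P_{US}$) and argues directly to obtain the explicit bound $\|P_{SS^\perp}\circ P_{UU^\perp}\|\le\sqrt{1-1/\|P_{US}\|^2}$, whereas you phrase it as a contradiction; your inequalities immediately give the analogous quantitative bound $\|s_n\|^2\le \|P_{SU}\|^2/(1+\|P_{SU}\|^2)$ if you unwind them.
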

\begin{proof}
If $S=\{0\}$ or $U=\{0\}$ it is obvious, so we will not consider this cases. Fix $v\in U$ with $\parallel v\parallel=1$ and note that
\begin{equation*}
    1=\parallel v\parallel=\parallel P_{US}(v)-P_{US}P_{SS^{\perp}}(v)\parallel\le \parallel P_{US}\parallel_{\mathcal{L}(X)} \parallel v-P_{SS^{\perp}}v\parallel.
\end{equation*}

Consequently,
\begin{align*}
    \dfrac{1}{ \parallel P_{US}\parallel_{\mathcal{L}(X)}^2}&\le \langle v-P_{SS^{\perp}}v,v-P_{SS^{\perp}}v\rangle\\
    &=\langle (I-P_{SS^{\perp}})v,v-P_{SS^{\perp}}v \rangle\\
    & =\langle (I-P_{SS^{\perp}})v,v\rangle\\
    &=\parallel v\parallel^2-\langle P_{SS^{\perp}}v,(I-P_{SS^{\perp}})v+P_{SS^{\perp}}v)\rangle\\
    &=1 -\parallel P_{SS^{\perp}}v\parallel^2\\
\end{align*}

Therefore $\parallel P_{US}\parallel_{\mathcal{L}(X)} \ge 1$ and 
\begin{equation*}
   \parallel P_{SS^{\perp}}v\parallel\le \sqrt{1-\dfrac{1}{ \parallel P_{US}\parallel_{\mathcal{L}(X)}^2}},\ \forall \parallel v\parallel =1, v \in U.
\end{equation*}

Since $P_{US}\in\mathcal{L}(X)$ is a non-null projection, then $\parallel P_{US}\parallel_{\mathcal{L}(X)}\ge 1$, which concludes the proof.
\end{proof}

\begin{theorem}\label{teoremaabstrato}
Let $\{E_k\}_{k\in \mathbb{Z}}$  be a sequence of Banach spaces and $\{\phi_k\}_{k\in\mathbb{Z}}$ be a sequence of maps  $\phi_k:E_k\to E_{k+1}$ such that for each $k\in\mathbb{Z}$ we have
\begin{equation*}
    \phi_k=A_k+w_{k+1},
\end{equation*}
where $A_k:E_k\to E_{k+1}$ is linear. Consider the following conditions:
\begin{enumerate}
    \item For each $k\in\mathbb{Z}$ there exist continuous projections $P_k,Q_k:E_k\to E_k$ and constants $\lambda\in(0,1), M>0$ independent of $k$ such that
    \begin{equation*}
        P_k+Q_k=Id,\ \ \ \ \parallel P_k\parallel_{\mathcal{L}(E_k)},\parallel Q_k\parallel_{\mathcal{L}(E_k)}\le M,\ \forall k\in\mathbb{Z}
    \end{equation*}
    and
    \begin{equation*}
        \parallel A_kP_k\parallel_{\mathcal{L}(E_k,E_{k+1})}\le\lambda,\ \ \ A_kP_k(E_k)\subset P_{k+1}E_{k+1},\ \forall k\in\mathbb{Z}.
    \end{equation*}
    \item There exist linear maps $B_k:Q_{k+1}E_{k+1}\to E_k$ satisfying
    \begin{equation*}
      B_kQ_{k+1}E_{k+1}\subset Q_kE_k,\ \parallel B_kQ_{k+1}\parallel_{\mathcal{L}(E_{k+1},E_k)}\le\lambda,\ A_kB_k|_{Q_{k+1}E_{k+1}}=I|_{Q_{k+1}E_{k+1}} ,\ \forall k\in\mathbb{Z}. 
    \end{equation*}
\item There exist $\Delta,k_1>0$ with the property
\begin{equation*}
    \parallel w_{k+1}v-w_{k+1}v'\parallel_{E_{k+1}}\le k_1\parallel v-v'\parallel_{E_k},\ \forall\parallel v\parallel,\parallel v'\parallel\le\Delta,\ \forall k\in\mathbb{Z}.
\end{equation*}
\item\label{item44} $k_1N_1<1$, where
\begin{equation*}
    N_1=M\left(\dfrac{1+\lambda}{1-\lambda}\right).
\end{equation*}

If items (1)-(4) are fulfilled, then there exist $d_1,L>0$ with the property: if
\begin{equation}\label{hipabstrato}
    \parallel \phi_k(0)\parallel_{E_{k+1}}\le d\le d_1,\ \forall k\in\mathbb{Z}
\end{equation}
then there exist $v_k\in E_k$ such that
\begin{equation}\label{teseabstrato}
    \parallel v_k\parallel_{E_{k}}\le Ld\ \ \text{and}\ \ \phi_k(v_k)=v_{k+1},\ \forall k\in\mathbb{Z}.
\end{equation}
\end{enumerate}
\end{theorem}
\begin{proof}
    See  \cite[Lemma 1.3.1]{Pilyugin} and \cite[Lemma A.0.8]{santamaria2014distance}.
\end{proof}

\begin{definition}\label{conteaskc}
    Let $(M,d)$ be a metric space and for each $\epsilon\in[0,1]$ let $\mathcal{T}_{\epsilon}=\{\mathcal{T}_{\epsilon}(t):t\ge0\}$ be a semigroup in $M$. We say that the family $\{\mathcal{T}_{\epsilon}\}_{\epsilon\in[0,1]}$ is:
    \begin{enumerate}
        \item \textbf{continuous} at $\epsilon=0$ if for each compact set $K\subset M$ and positive number $R>0$ it holds
        \begin{equation*}
            \sup\limits_{t\in[0,R]}\sup\limits_{x\in M}d(\mathcal{T}_{\epsilon}(t)x,\mathcal{T}_{0}(t)x)\to 0\ \text{as}\ \epsilon\to0.
        \end{equation*}
        \item \textbf{collectively asymptotically compact} at $\epsilon=0$ if given sequences of non-negative numbers $\{t_k\}_{k\in\mathbb{N}}$, with $t_k\to+\infty$ as $k\to+\infty$, and $\{\epsilon_k\}_{k\in\mathbb{N}}\subset[0,1]$ with $\epsilon_k\to 0$ as $k\to+\infty$, and a bounded sequence $\{x_k\}_{k\in\mathbb{N}}\subset M$, the sequence $\{\mathcal{T}(t_k)x_k\}_{k\in\mathbb{N}}$ has a convergent subsequence in $M$.
    \end{enumerate}
\end{definition}

\subsection*{Acknowledgments} 
JMA was partially supported by grants PID2022-137074NB-I00 and CEX2023-001347-
S ``Severo Ochoa Programme for Centres of Excellence in R\&D'' MCIN/AEI/10.13039/501100011033, the two of them from Ministerio de Ciencia e Innovaci\'on, Spain. Also by ``Grupo de Investigaci\'on UCM 920894 - CADEDIF'',

ANC was partially supported by FAPESP grant 2020/14075-6 y CNPq 308902/2023-8

CRTJ was partially supported by CAPES-PROEX-11169228/D and by FAPESP \# 2020/14353-6 and \#2022/02172-2, Brazil.


\begin{thebibliography}{9}
\bibitem{aragao2013non}Aragao-Costa, E., Caraballo, T., Carvalho, A. \& Langa, J. Non-autonomous Morse-decomposition and Lyapunov functions for gradient-like processes. {\em Transactions of the American Mathematical Society}. \textbf{365}, 5277-5312 (2013).




\bibitem{arrieta2000abstract}Arrieta, J. \& Carvalho, A. Abstract parabolic problems with critical nonlinearities and applications to Navier-Stokes and heat equations. {\em Transactions of the American Mathematical Society}. \textbf{352}, 285-310 (2000).


\bibitem{arrieta1992damped}Arrieta, J., Carvalho, A. \& Hale, J. A damped hyerbolic equation with critical exponent. {\em Communications in Partial Differential Equations}. \textbf{17}, 841-866 (1992).

\bibitem{arrieta2017distance}Arrieta, J. \& Santamar\'{ı}a, E. Distance of attractors of reaction-diffusion equations in thin domains. {\em Journal of Differential Equations}. \textbf{263}, 5459-5506 (2017).

\bibitem{Arrieta-Esperanza-2014}Arrieta, J. \& Santamaría, E. Estimates on the distance of inertial manifolds. {\em Discrete and Continuous Dynamical Systems}. \textbf{34},  3921-3944 (2014).

\bibitem{ArrietaEsperanza}Arrieta, J. \& Santamaría, E. $C^{1,\theta}$-Estimates on the distance of inertial manifolds. {\em Collectanea mathematica}. \textbf{69(3)}, 315-336 (2018).

\bibitem{bortolan2020lipschitz}Bortolan, M., Cardoso, C., Carvalho, A. \& Pires, L. Lipschitz perturbations of Morse-Smale semigroups. {\em Journal of Differential Equations}. \textbf{269}, 1904-1943 (2020).

\bibitem{Bortolan-Carvalho-Langa-book}Bortolan, M., Carvalho, A. \& Langa, J. Attractors Under Autonomous and Non-autonomous Perturbations. Vol. 246. {\em American Mathematical Society} (2020).

\bibitem{bortolan2014structure}Bortolan, M., Carvalho, A. \& Langa, J. Structure of attractors for skew product semiflows. {\em Journal of Differential Equations}. \textbf{257}, 490-522 (2014).

\bibitem{bortolan2022nonautonomous}Bortolan, M., Carvalho, A., Langa, J. \& Raugel, G. Nonautonomous Perturbations of Morse–Smale Semigroups: Stability of the Phase Diagram. {\em Journal of Dynamics and Differential Equations}.  1-67 (2022).


\bibitem{brunovsky2003genericity}Brunovsky, P. \& Raugel, G. Genericity of the Morse-Smale property for damped wave equations. {\em Journal of Dynamics and Differential Equations}. \textbf{15}, 571-658 (2003).


\bibitem{buckholtz2000hilbert}Buckholtz, D. Hilbert space idempotents and involutions. {\em Proceedings of the American Mathematical Society}. \textbf{128}, 1415-1418 (2000).


\bibitem{carvalho2008regularity}Carvalho, A. \& Cholewa, J. Regularity of solutions on the global attractor for a semilinear damped wave equation. {\em Journal of Mathematical Analysis and Applications}. \textbf{337}, 932-948 (2008).



\bibitem{carvalho2011exponential}Carvalho, A. \& Cholewa, J. Exponential global attractors for semigroups in metric spaces with applications to differential equations. {\em Ergodic Theory and Dynamical Systems}. \textbf{31}, 1641-1667 (2011).

\bibitem{carvalho2009local}Carvalho, A. \& Cholewa, J. Local well posedness, asymptotic behavior and asymptotic bootstrapping for a class of semilinear evolution equations of the second order in time. {\em Transactions of the American Mathematical Society}. \textbf{361}, 2567-2586 (2009).
\bibitem{carvalho2014equi}Carvalho, A., Cholewa, J. \& Dłotko, T. Equi-exponential attraction and rate of convergence of attractors with application to a perturbed damped wave equation. {\em Proceedings of the Royal Society of Edinburgh Section A: Mathematics}. \textbf{144}, 13-51 (2014).
\bibitem{carvalho2012attractors}Carvalho, A., Langa, J. \& Robinson, J. Attractors for infinite-dimensional non-autonomous dynamical systems. Vol. 182. {\em Springer Science \& Business Media} (2012).
\bibitem{carvalho1998boundary}Carvalho, A. \& Primo, M. Boundary synchronization in parabolic problems with nonlinear boundary conditions. {\em Dynamics of Continuous, Discrete and Impulsive Systems Series B: Application and Algorithm} \textbf{7}, 541-560 (2000).
\bibitem{chow1989shadowing}Chow, S., Lin, X. \& Palmer, K. A shadowing lemma with applications to semilinear parabolic equations. {\em SIAM Journal on Mathematical Analysis}. \textbf{20}, 547-557 (1989).

\bibitem{da2010dirichlet}Da Prato, G. \& Lunardi, A. On the Dirichlet semigroup for Ornstein–Uhlenbeck operators in subsets of Hilbert spaces. {\em Journal of Functional Analysis}. \textbf{259}, 2642-2672 (2010).
\bibitem{desheng2004equi}Desheng, L. \& Kloeden, P. Equi-attraction and the continuous dependence of attractors on parameters. {\em Glasgow Mathematical Journal}. \textbf{46}, 131-141 (2004).
\bibitem{foias1988inertial}Foias, C., Sell, G. \& Temam, R. Inertial manifolds for nonlinear evolutionary equations. {\em Journal of Differential Equations}. \textbf{73}, 309-353 (1988).

\bibitem{gan2002generalized}Gan, S. A generalized shadowing lemma. {\em Discrete and Continuous Dynamical Systems}. \textbf{8}, 627-632 (2002).

\bibitem{hale2010asymptotic}Hale, J. Asymptotic behavior of dissipative systems. No.25. \textit{American Mathematical Society} (2010).
\bibitem{hale2009ordinary}Hale, J. Ordinary differential equations. \textit{Courier Corporation} (2009).

\bibitem{hale2006dynamics}Hale, J., Magalhães, L. \& Oliva, W. Dynamics in infinite dimensions. Vol 47. \textit{Springer Science \& Business Media} (2006).


\bibitem{hale1989lower}Hale, J. \& Raugel, G. Lower semicontinuity of attractors of gradient systems and applications. {\em Annali di Matematica Pura ed Applicata}. \textbf{154}, 281-326 (1989).

\bibitem{hale1990lower}Hale, J. \& Raugel, G. Lower semicontinuity of the attractor for a singularly perturbed hyperbolic equation. {\em Journal of Dynamics And Differential Equations}. \textbf{2}, 19-67 (1990).

\bibitem{henry1994exponential}Henry, D. Exponential dichotomies, the shadowing lemma and homoclinic orbits in Banach spaces. {\em Resenhas do Instituto de Matemática e Estat\'{i}stica da Universidade de São Paulo}. \textbf{1}, 381-401 (1994).

\bibitem{henry2006geometric}Henry, D. Geometric theory of semilinear parabolic equations. Vol 840. \textit{Springer} (2006).

\bibitem{hoang2015continuity}Hoang, L., Olson, E. \& Robinson, J. On the continuity of global attractors. {\em Proceedings of the American Mathematical Society}. \textbf{143}, 4389-4395 (2015).

\bibitem{jolly1989explicit}Jolly, M. Explicit construction of an inertial manifold for a reaction diffusion equation. {\em Journal of Differential Equations}. \textbf{78}, 220-261 (1989).



\bibitem{komura1967nonlinear}Komura, Y. Nonlinear semi-groups in Hilbert space. {\em Journal of The Mathematical Society Of Japan}. \textbf{19}, 493-507 (1967).


\bibitem{kupka1963contributiona}Kupka, I. Contribution a la théorie des champs génériques. {\em Contributions to Differential Equations}. \textbf{2}, 457-484 (1963).

\bibitem{lee2021diffeomorphisms}Lee, M., Oh, J. \& Wen, X. Diffeomorphisms with a generalized Lipschitz shadowing property. {\em Discrete \& Continuous Dynamical Systems: Series A}. \textbf{41} (2021).
\bibitem{li2003chaos}Li, Y. Chaos and shadowing lemma for autonomous systems of infinite dimensions. {\em Journal of Dynamics and Differential Equations}. \textbf{15}, 699-730 (2003).

\bibitem{palis1969morse}Palis, J. On morse-smale dynamical systems. \textit{Topology}. \textbf{8}, 385-404 (1969).

\bibitem{palis2012geometric}Palis, J. \& De Melo, W. Geometric theory of dynamical systems: an introduction. \textit{Springer Science \& Business Media} (2012).
\bibitem{palmer2012lipschitz}Palmer, K., Pilyugin, S. \& Tikhomirov, S. Lipschitz shadowing and structural stability of flows. {\em Journal of Differential Equations}. \textbf{252}, 1723-1747 (2012).
\bibitem{pazy2012semigroups}Pazy, A. Semigroups of linear operators and applications to partial differential equations. Vol. 44. \textit{Springer Science \& Business Media} (2012).
\bibitem{PEIXOTO1967214}Peixoto, M. On an approximation theorem of Kupka and Smale. {\em Journal of Differential Equations}. \textbf{3}, 214-227 (1967).


\bibitem{Pilyugin}Pilyugin, S.  Shadowing in Dynamical Systems. \textit{Springer-Verlag} (1999).

\bibitem{pilyugin2017shadowing}Pilyugin, S. \& Sakai, K. Shadowing and hyperbolicity. \textit{Springer} (2017).
\bibitem{pilyugin2010lipschitz}Pilyugin, S. \& Tikhomirov, S. Lipschitz shadowing implies structural stability. {\em Nonlinearity}. \textbf{23}, 2509 (2010).
\bibitem{raugel1989continuity}Raugel, G. Continuity of attractors. {\em ESAIM: Mathematical Modelling and Numerical Analysis}. \textbf{23}, 519-533 (1989).
\bibitem{raugel2002global}Raugel, G. Global attractors in partial differential equations. {\em Handbook of Dynamical Systems}. \textbf{2}, 885-982 (2002).

\bibitem{robbin1971structural}Robbin, J. A structural stability theorem. {\em Annals of Mathematics}. \textbf{94}, 447-493 (1971).
\bibitem{robinson1974structural}Robinson, C. Structural stability of vector fields. {\em Annals of Mathematics}. \textbf{99}, 154-175 (1974).

\bibitem{robinson2002computing}Robinson, J. Computing inertial manifolds. {\em Discrete and Continuous Dynamical Systems}. \textbf{8}, 815-834 (2002).
\bibitem{robinson2001infinite}Robinson, J. Infinite-dimensional dynamical systems: an introduction to dissipative parabolic PDEs and the theory of global attractors. \textit{Cambridge University Press} (2001).
\bibitem{rosa1998global}Rosa, R. The global attractor for the 2D Navier-Stokes flow on some unbounded domains. {\em Nonlinear Analysis}. \textbf{32}, 71-86 (1998).
\bibitem{rozendaal2019optimal}Rozendaal, J., Seifert, D. \& Stahn, R. Optimal rates of decay for operator semigroups on Hilbert spaces. {\em Advances in Mathematics}. \textbf{346}, 359-388 (2019).



\bibitem{santamaria2014distance}Santamaria, E. Distance of attractors of evolutionary equations. \textit{Ph. D. thesis, Universidad Complutense de Madrid} (2014).


\bibitem{sell2013dynamics}Sell, G. R., \& You, Y. Dynamics of evolutionary equations. Vol. 143. \textit{Springer Science \& Business Media} (2013).


\bibitem{shub2013global}Shub, M. Global stability of dynamical systems.     \textit {Springer Science \& Business Media} (2013).

\bibitem{smale1963stable}Smale, S. Stable manifolds for differential equations and diffeomorphisms. {\em Annali della Scuola Normale Superiore di Pisa-Scienze Fisiche e Matematiche}. \textbf{17}, 97-116 (1963).


\bibitem{tan1993asymptotic}Tan, K. \& Xu, H. Asymptotic behavior of almost-orbits of nonlinear semigroups of non-Lipschitzian mappings in Hilbert spaces. {\em Proceedings of the American Mathematical Society}. \textbf{117}, 385-393 (1993).





\bibitem{temam2012infinite}Temam, R. Infinite-dimensional dynamical systems in mechanics and physics. Vol 68. \textit{Springer Science \& Business Media} (2012).

\bibitem{tikhomirov2015holder}Tikhomirov, S. Hölder shadowing on finite intervals. \textit{Ergodic Theory and Dynamical Systems}. \textbf{35(6)}, 2000-2016 (2015).

\bibitem{viana2016foundations}Viana, M. Foundations of Ergodic Theory. No. 151. \textit{Cambridge University Press} (2016).
\end{thebibliography}
\end{document}